\numberwithin{equation}{section}
\newtheorem{theo}{Theorem}
\newtheorem{prop}{Proposition}[section]
\newtheorem{lemm}[prop]{Lemma}
\theoremstyle{definition}
\newtheorem{defi}[prop]{Definition}
\DeclareMathOperator{\supp}{supp}
\newcommand{\RR}{\mathbb{R}}
\newcommand{\CC}{\mathbb{C}}
\newcommand{\KK}{\mathbb{K}}
\newcommand{\loc}{\mathrm{loc}}
\newcommand{\avail}{V_{\mathrm{a}}}
\newcommand{\req}{V_{\mathrm{r}}}
\DeclareMathOperator{\rank}{rank}
\newcommand{\nrank}{\operatorname{rank}_{\KK(s)}}
\DeclareMathOperator{\range}{range}
\DeclareMathOperator{\disc}{disc}
\DeclareMathOperator{\diag}{diag}
\renewcommand{\Re}{\operatorname{Re}}
\title[Frequency criteria for exponential stability]
{Frequency criteria for exponential stability}
\author{Oran Gannot}
\email{ogannot@berkeley.edu}
\begin{document} 
	
	\begin{abstract} 
We discuss some frequency-domain criteria for the exponential stability of nonlinear feedback systems based on dissipativity theory. Applications are given to convergence rates for certain perturbations of the damped harmonic oscillator.
	\end{abstract}
	
	\maketitle
	
	\thispagestyle{empty}

	\section{Introduction}
\subsection{Overview} \label{subsect:overview}

This paper concerns exponential stability for nonlinear feedback systems of Lur'e type. While a more general viewpoint is adopted in \S \ref{sect:expcont}, the examples we consider are all of the form
\begin{equation}
\begin{aligned} \label{eq:lureproblem}
\dot z = Az + B\phi(Cz),
\end{aligned}
\end{equation}
where $A,B,C$ are matrices with values in $\KK = \CC$ or $\RR$, and $\phi$ is a continuous function. 

 In general, an autonomous system $\dot z = f(z)$ with $f$ continuous is said to be $r$-exponentially stable at a given rate $r \geq 0$ if $x=0$ is an equilibrium point and there is an increasing continuous function $\kappa$, with $\kappa(0) = 0$, such that 
\[
|z(t)| \leq \kappa(|z(0)|) \exp(-rt)
\]
for each solution $x$ and every $t \geq 0$ for which $x$ exists (a posteriori these solutions are bounded, hence exist globally).

Criteria for exponential stability of systems \eqref{eq:lureproblem}, including frequency domain formulations, can be deduced from standard results in hyperstability and dissipativity theory. A brief but self-contained exposition of the necessary material is presented in \S \ref{sect:expcont}, complete with simple proofs.
Special attention is given to critical cases where the linear block has marginal stability properties, is not necessarily minimal, and the frequency inequalities are non-strict.

 In particular, we provide a simplified proof of Popov's hyperstability theorem \cite[\S 16, Theorem 1]{popov1973hyperstability}. This result (in its full generality) is often overlooked in modern literature on the subject, but deserves to be better known. We also provide a new version of Popov's criterion for exponential stability that applies to nonlinearities with uncontrolled growth.

Although frequency criteria for exponential stability have been studied for a long time (see, e.g., \cite{bliman2002absolute,boczar2015exponential,hu2016exponential,accikmecse2008stability} for some recent results), even the simplest applications to mechanical systems are practically absent in the literature. A class of examples related to the Li\'enard equation is discussed in \S \ref{subsect:applicationscontinuous} below. For many of these examples, a direct time-domain approach is impractical; in contrast, the frequency-domain criteria described in this paper provide a systematic reduction in the number of free parameters, which substantially simplifies the analysis. These examples show that frequency methods still have an important role to play in analytic proofs of stability results for simple mechanical systems. 

\subsection{Applications} \label{subsect:applicationscontinuous} 
The simplest applications are to second-order equations. As an illustration we consider a class of dissipative Hamiltonian systems on $\RR^{2d}$ of the form
\begin{equation} \label{eq:hamiltonianformulation}
\dot z = (J-S)\nabla H(z),
\end{equation}
where $J$ is the usual symplectic matrix, and the symmetric part $S \geq0$ represents resistive elements of the system. Here 
\[
z = (q,p) \in \RR^{2d}, \quad  H(x) = \tfrac{1}{2}|p|^2 + f(q),
\]
and $f : \RR^d \rightarrow \RR$ is a given potential.  Assume that $S = \diag(\tau I,\,2\sigma I)$, where $\sigma > 0$ and $\tau \geq 0$ are constant, so \eqref{eq:hamiltonianformulation} becomes
\begin{equation}
\tag{H}
\begin{aligned} \label{eq:H}
\dot q &= p - \tau \nabla f(q), \\
\dot p &= -2\sigma p - \nabla f(q).
\end{aligned}
\end{equation}
The system \eqref{eq:H} with $\tau = 0$ represents a class of Rayleigh--Li\'enard oscillators with constant damping:
\begin{equation}
\tag{H$_0$}
\begin{aligned} \label{eq:H0}
\dot q &= p  \\
\dot p &= -2\sigma p - \nabla f(q).
\end{aligned}
\end{equation}
The case $\tau \geq 0$ has received less attention. Note that \eqref{eq:H} is equivalent to the second-order equation
\begin{equation} \label{eq:hessiandamped}
\ddot q + \left( 2\sigma + \tau^2 \nabla^2 f(q)\right) \dot q + (1+2\sigma \tau \nabla f(q)) = 0.
\end{equation}
 Equations of this form are sometimes called Hessian-damped; although several works have considered \eqref{eq:hessiandamped}, including other first-order representations (for example \cite{alvarez2002second,attouch2018fast,attouch2016fast,shi2018understanding,attouch2002optimizing,attouch2014dynamical,attouch2019first}), its simple Hamiltonian formulation appears to have been overlooked.  Discrete analogues of \eqref{eq:H} with $\tau > 0$ play an important role in accelerating the convergence of iterative convex optimization methods \cite{shi2018understanding}.
 
 To state the main results, we recall some classes of nonlinearities. Given $m < L \leq \infty$, a continuous function $\phi :   \KK^n \rightarrow \KK^n$
 is said to belong to the sector $[m,L]$ if 
 \begin{equation} \label{eq:sectorbounded1}
 \Re \langle mw - \phi(w), w - L^{-1}\phi(w) \rangle \leq 0
 \end{equation}
 for all $w \in \KK^n$, with the obvious interpretation when $L = \infty$.  Sector bounded nonlinearities play a central role in the classical passivity-based conditions for Lyapunov stability (e.g., the circle and Popov criteria --- cf. \S \ref{subsect:expstability}).
 
 Now let $\KK = \RR$, and suppose that $\phi = \nabla f$ for some potential $f : \RR^n \rightarrow \RR$. Even if $\phi$ belongs to a sector $[m,\infty]$, its potential can be  far from convex. Following \cite{necoara2019linear}, a $\mathcal{C}^1$ function $f : \RR^n \rightarrow \RR$ is said to be $m$-quasi-strongly convex (with respect to the origin) if $\nabla f$ belongs to the sector $[m,\infty]$ and
  \begin{equation}\label{eq:quasiconvex} 
 f(w) - \langle \nabla f(w),w \rangle + (m/2) |w|^2 \leq f(0)
 \end{equation}
for all $y \in \RR^n$. The latter condition further limits the possible concavity of $f$ versus its growth. This terminology is not entirely standard, and similar conditions appear in the literature under different names. 
\begin{theo} \label{theo:H0quasi}
	Suppose that $m >0,\, \sigma>0$, and $r >0$ satisfy
	\begin{equation} \label{eq:H0quasiparametercontraints}
	r \leq 2\sigma/3, \quad m \geq 2r\sigma -r^2.
	\end{equation}
	If $\nabla f$ belongs to the sector $[m,\infty]$ and $f$ is $m$-quasi-strongly-convex, then \eqref{eq:H0} is $r$-exponentially stable.
\end{theo}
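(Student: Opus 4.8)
The plan is to follow the program of \S\ref{sect:expcont}: realize \eqref{eq:H0} as a Lur'e system of the form \eqref{eq:lureproblem} with $z = (q,p)$, $\phi = \nabla f$, $C$ the projection onto the $q$-coordinate, and then apply the Popov-type criterion for $r$-exponential stability. In this realization the loop transfer function is $G(s) = C(sI-A)^{-1}B = -\bigl(s(s+2\sigma)\bigr)^{-1}I$, so the unforced linear part has characteristic polynomial $s(s+2\sigma)$ with a pole at the origin; thus \eqref{eq:H0} is one of the ``critical'' configurations (non-Hurwitz, non-strict frequency inequality) that the theory is built to cover. After the gauge transformation $z \mapsto e^{rt}z$ the relevant transfer function becomes $G(s-r)$, whose pole at $s=r>0$ must now be compensated by the sector condition \eqref{eq:sectorbounded1}. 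Since $\nabla f$ is only assumed to lie in $[m,\infty]$ and may grow arbitrarily fast, a bounded circle-criterion argument is unavailable; instead one uses a Popov/Lur'e--Postnikov storage function containing the potential itself, and this is where quasi-strong convexity \eqref{eq:quasiconvex} enters.

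Concretely, I would work with the two-parameter family
\[
V(q,p) = \tfrac{1}{2}|p|^2 + \bigl(f(q) - f(0)\bigr) + \beta\langle p,q\rangle + \tfrac{\gamma}{2}|q|^2, \qquad \beta,\gamma \in \RR,
\]
where $f(q) - f(0) = \int_0^1 \langle \nabla f(tq),q\rangle\,dt$ is the Lur'e--Postnikov term attached to $\nabla f$. Differentiating along \eqref{eq:H0} and writing $g = \nabla f(q)$, the two ``kinetic'' cross terms $-\langle p,g\rangle$ and $\langle g,p\rangle$ cancel, leaving $\dot V = (\beta - 2\sigma)|p|^2 + (\gamma - 2\sigma\beta)\langle p,q\rangle - \beta\langle g,q\rangle$. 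Taking $\gamma = (2\sigma - 2r)\beta$ annihilates the $\langle p,q\rangle$-coefficient of $\dot V + 2rV$; then bounding $2r(f(q)-f(0)) \le 2r\langle g,q\rangle - rm|q|^2$ via \eqref{eq:quasiconvex}, and $(2r - \beta)\langle g,q\rangle \le (2r-\beta)m|q|^2$ via \eqref{eq:sectorbounded1} (legitimate once $\beta \ge 2r$), yields
\[
\dot V + 2rV \;\le\; (\beta + r - 2\sigma)\,|p|^2 \;+\; \bigl(rm - \beta(m - 2r\sigma + 2r^2)\bigr)\,|q|^2 .
\]
Both coefficients are $\le 0$ exactly when a $\beta$ with $\max\bigl(2r,\; rm/(m-2r\sigma+2r^2)\bigr) \le \beta \le 2\sigma - r$ exists (note $m - 2r\sigma + 2r^2 \ge r^2 > 0$ under \eqref{eq:H0quasiparametercontraints}); a short computation shows this happens precisely when $r \le 2\sigma/3$ and $m \ge 2r\sigma - r^2$, i.e.\ under \eqref{eq:H0quasiparametercontraints}, with $\beta = 2\sigma - r$ an admissible choice. (This is the verification of the Popov frequency inequality for $G(i\omega - r)$, with multiplier parameter tied to $\beta$.)

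To conclude I would then extract the state estimate. From $\dot V + 2rV \le 0$ we get $V(z(t)) \le e^{-2rt}V(z(0))$, and continuity of $f$ gives $V(z(0)) \le \Phi(|z(0)|)$ for some continuous increasing $\Phi$ with $\Phi(0)=0$. Integrating $\nabla f$ along rays, \eqref{eq:sectorbounded1} yields $f(q)-f(0) \ge \tfrac m2 |q|^2$, so with $\beta = 2\sigma - r$ and $m \ge 2r\sigma - r^2$ one has $V \ge \tfrac12|p + \beta q|^2$; hence $|\dot q + \beta q| = |p + \beta q| \le \sqrt{2\Phi(|z(0)|)}\,e^{-rt}$. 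Because $r \le 2\sigma/3 < \sigma$ forces $\beta = 2\sigma - r > r$, Duhamel's formula applied to $\dot q = -\beta q + O(e^{-rt})$ gives $|q(t)| \le \kappa_1(|z(0)|)e^{-rt}$, and then $|p(t)| \le |p+\beta q| + \beta|q| \le \kappa_2(|z(0)|)e^{-rt}$, producing the required $\kappa$.

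I expect the main obstacle to be the borderline case $m = 2r\sigma - r^2$: there the frequency inequality is non-strict, $G(i\omega - r)$ has a pole on the imaginary axis, $\beta$ is forced to equal $2\sigma - r$, and $V$ degenerates to the positive \emph{semi}definite form $\tfrac12|p + \beta q|^2 + \bigl(f(q)-f(0)-\tfrac m2|q|^2\bigr)$, so the Lyapunov inequality alone does not control $|z|$ and one genuinely needs the bootstrap above (which uses only $\beta > r$). This marginal, seemingly non-minimal situation is exactly what the general results of \S\ref{sect:expcont} are designed to handle, so an alternative is to quote that theorem directly and carry out only the frequency-inequality check described above.
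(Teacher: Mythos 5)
Your proposal is correct, and I checked the algebra: with $\gamma=(2\sigma-2r)\beta$ one indeed gets $\dot V+2rV\le(\beta+r-2\sigma)|p|^2+\bigl(rm-\beta(m-2r\sigma+2r^2)\bigr)|q|^2$ after using \eqref{eq:quasiconvex} and the sector bound (the latter valid since $\beta=2\sigma-r\ge 2r$ exactly when $r\le 2\sigma/3$), and $rm\le(2\sigma-r)(m-2r\sigma+2r^2)$ is equivalent, for $r<\sigma$, to $m\ge 2r\sigma-r^2$; the lower bound $V\ge\tfrac12|p+\beta q|^2$ and the Duhamel bootstrap (using $\beta=2\sigma-r>r$) then give the state estimate even in the degenerate case $m=2r\sigma-r^2$. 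However, your route is genuinely different from the paper's. The paper does not construct a Lyapunov function by hand: it loop-shifts so the nonlinearity lies in $[0,\infty]$, verifies the frequency inequality \eqref{eq:popovFDI} with multipliers $\lambda,\nu\ge 0$ and $\mu=0$ (forced by $L=\infty$), which reduces to $\gamma\ge0$, $\beta\ge0$ with $\gamma=(\lambda+\nu r)(m-2r\sigma+r^2)$, $\beta=-\lambda+\nu(2\sigma-3r)$, and then invokes Lemma \ref{lemm:popov}; that lemma rests on the hyperstability machinery (Proposition \ref{prop:popovhyperstability2}, minimal stability, zero dynamics), and the critical case $r=2\sigma/3$, $m=2r\sigma-r^2$ is handled there by checking that $\Phi(\infty,s)=(s+2r)H(s)$ is not identically zero rather than by any bootstrap. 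Your explicit Lur'e--Postnikov function (a storage function of the form \eqref{eq:quadraticpluspotential}, with $P$ determined by $\beta=2\sigma-r$ and $\gamma=(2\sigma-2r)(2\sigma-r)$) is the time-domain certificate whose existence the paper's frequency argument guarantees; what your approach buys is an elementary, self-contained proof of this one theorem, including a transparent treatment of the marginal case where the storage is only positive semidefinite, while the paper's approach buys uniformity (the same multiplier framework yields Theorems \ref{theo:H0quasifinitesector}--\ref{theo:tau>0}) and the optimality assertion over the multipliers. Your parenthetical identification of the computation with the frequency-inequality check is only heuristic (and your unshifted $G(s)=-1/(s(s+2\sigma))$ is not the paper's loop-shifted $H(s)$), but nothing in your argument depends on it.
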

Theorem \ref{theo:H0quasi} fails to show that the nonlinear problem is stable at the same exponential rate as the linear problem, namely the one where $f$ is quadratic and $\nabla^2 f \geq mI$. A spectral analysis (see Lemma \ref{lemm:tau=0linearconstraint}) shows that the linear problem is $r$-exponentially stable if and only if $r \leq \sigma$ and $m \geq 2r\sigma-r^2$, and at least one of these inequalities is strict.

The proof of Theorem \ref{theo:H0quasi} follows from a new Popov-type frequency criterion (see Lemma \ref{lemm:popov}). When at least one of the inequalities in \eqref{eq:H0quasiparametercontraints} is strict, feasibility of the frequency-domain criterion is equivalent to the existence of a time-domain Lyapunov function
\begin{equation} \label{eq:quadraticpluspotential}
V(z) = \langle Pz,z\rangle + f(q) - f(0)
\end{equation}
satisfying $\dot V \leq -2r V$. Sufficient conditions for the existence of a suitable matrix $P$ can be formulated as a semidefinite program, and an explicit formula is easily guessed from numerical experiments. One possibility is
\[
P = \begin{bmatrix}
r^2I & rI \\
rI & I/2
\end{bmatrix},
\]
which has the property that $P + \diag(mI/2,\,0)$ is positive definite precisely when at least one of the inequalities in \eqref{eq:H0quasiparametercontraints} is strict. Note that this $P$ depends on the rate $r$, and it is unclear how its form could be reasoned a priori.

When the upper sector bound is unknown, namely $\nabla f$ is only known to belong to the sector $[m,\infty]$, the best rate guaranteed by Theorem \ref{theo:H0quasi} occurs when 
\[
r = 2\sigma/3 - \varepsilon, \quad \sigma^2 = 9m/8,
\] 
yielding $r = \sqrt{m/2} - \varepsilon$ for any $\varepsilon>0$. This is in contrast to the linear problem over the same sector, where the best rate is 
\[
r = \sqrt{m} - \varepsilon
\]
for any $\varepsilon > 0$,
achieved in the critically damped regime $\sigma^2 = m$. If $r > 2\sigma/3$, then $r$-exponential stability still holds provided $\nabla f$ is known to lie in a certain finite sector:

\begin{theo} \label{theo:H0quasifinitesector}
	Suppose that $m >0,\, \sigma>0, \, r >0$, and $L> m$ satisfy
	\begin{gather*}
	2\sigma/3 < r < \sigma, \quad m \geq  8r^2 - 10r\sigma + 4 \sigma^2, \\ L \leq m + \frac{2(\sigma-r)}{3r - 2\sigma} \cdot \max\left( m-4r\sigma +4 r^2, 2m -4r\sigma + 2r^2\right).
	\end{gather*}
	If $\nabla f$ belongs to the sector $[m,L]$ and $f$ is $m$-quasi-strongly-convex, then \eqref{eq:H0} is $r$-exponentially stable.
\end{theo}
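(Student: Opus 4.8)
The plan is to obtain $r$-exponential stability from the Popov-type frequency criterion of Lemma~\ref{lemm:popov}, exactly as for Theorem~\ref{theo:H0quasi}; the only new feature is that the certificate now depends on $L$ and a finite upper sector bound is genuinely needed once $r>2\sigma/3$. First I would put \eqref{eq:H0} in Lur'e form: writing $g=\nabla f(q)$ and $\psi(q)=\nabla f(q)-mq$, the map $\psi$ lies in the sector $[0,L-m]$ and \eqref{eq:H0} reads $\dot z=Az+B\psi(Cz)$ with $z=(q,p)$,
\[
A=\begin{bmatrix}0&I\\-mI&-2\sigma I\end{bmatrix},\qquad B=\begin{bmatrix}0\\-I\end{bmatrix},\qquad C=\begin{bmatrix}I&0\end{bmatrix}.
\]
Since the sector condition is homogeneous of degree two, the substitution $z\mapsto e^{rt}z$ relates $r$-exponential stability of \eqref{eq:H0} to boundedness of solutions of a weighted Lur'e system, whose linear block has transfer function $G_r(s)=-\big((s-r)^2+2\sigma(s-r)+m\big)^{-1}$ and whose nonlinearity still lies in $[0,L-m]$. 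Lemma~\ref{lemm:popov} reduces the latter to feasibility of the associated Popov frequency inequality for $G_r$ on the sector $[m,L]$, together with positivity of the Lyapunov candidate $V(z)=\langle Pz,z\rangle+f(q)-f(0)$ built from the Kalman--Yakubovich--Popov certificate $P$; equivalently, it suffices to exhibit a symmetric $P$ for which $V$ is positive definite and $\dot V\le-2rV$ along \eqref{eq:H0}. The hypotheses on $f$ enter as in Theorem~\ref{theo:H0quasi}: $\nabla f\in[m,\infty]$ gives $(m/2)|q|^2\le f(q)-f(0)$ and $|\nabla f(q)|\le L|q|$, hence a two-sided bound $c|z|^2\le V(z)\le C|z|^2$ once $P+\diag(mI/2,\,0)$ is positive definite, while $m$-quasi-strong-convexity supplies $f(q)-f(0)\le\langle\nabla f(q),q\rangle-(m/2)|q|^2$, which lets one discard the potential term from $\dot V+2rV$.

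Because $(A,B,C)$ is built from scalar multiples of $I$ and the sector and quasi-convexity hypotheses are coordinate-free, the problem decouples over the $d$ directions, so one may take $P=\left[\begin{smallmatrix}aI&bI\\bI&cI\end{smallmatrix}\right]$. Computing
\[
\dot V+2rV=2\langle Pz,\dot z\rangle+\langle g,p\rangle+2r\langle Pz,z\rangle+2r\big(f(q)-f(0)\big),
\]
substituting $\dot z=(p,\,-2\sigma p-g)$, replacing $2r(f(q)-f(0))$ by its quasi-convex upper bound $2r\langle g,q\rangle-rm|q|^2$, and adding a multiplier $\lambda\ge0$ times the sector slack $(m+L)\langle g,q\rangle-mL|q|^2-|g|^2\ge0$, the condition $\dot V\le-2rV$ becomes negative semidefiniteness of a single $3\times3$ quadratic form $\mathcal Q$ in $(q_i,p_i,g_i)$. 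I would then exhibit an explicit certificate $(a,b,c,\lambda)$, rational in $(r,\sigma,m,L)$, read off from a semidefinite program and normalised so that it degenerates to the matrix $P$ of Theorem~\ref{theo:H0quasi} as $r\downarrow2\sigma/3$ — at which point the $L$-bound blows up and $m\ge8r^2-10r\sigma+4\sigma^2$ reduces to $m\ge8\sigma^2/9$, matching \eqref{eq:H0quasiparametercontraints} — and reduce the two requirements $\mathcal Q\le0$ and $P+\diag(mI/2,\,0)>0$ to polynomial inequalities in the parameters.

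The hard part will be verifying that these polynomial inequalities are implied by — and in fact equivalent to — the hypotheses $2\sigma/3<r<\sigma$, $m\ge8r^2-10r\sigma+4\sigma^2$, and the displayed bound on $L$. The $\max$ of $m-4r\sigma+4r^2$ and $2m-4r\sigma+2r^2=2(m-2r\sigma+r^2)$ in that bound signals that the binding condition for $\mathcal Q\le0$ switches between two principal minors (equivalently, two Schur complements) of the $3\times3$ matrix; I would determine which minor governs each regime, check that the extremal direction in $g$ is the one on which the $-\lambda|g|^2$ term is active — which is exactly why a finite $L$ cannot be dispensed with once $3r>2\sigma$, since without that term $\mathcal Q$ is unbounded above in the $g$-direction — and confirm that the stated threshold on $L$ is sharp for the chosen certificate. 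Granting $\dot V\le-2rV$, Gr\"onwall gives $V(z(t))\le V(z(0))e^{-2rt}$, and the two-sided bound on $V$ then yields $|z(t)|\le\kappa(|z(0)|)e^{-rt}$ with $\kappa$ linear, which is the assertion.
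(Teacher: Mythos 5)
There is a genuine gap, and it lies in the choice of certificate family rather than in the overall framework (which does match the paper's: Lemma~\ref{lemm:popov} applied to the loop-shifted Lur'e form). Your $S$-procedure bounds the entire potential term $2r\bigl(f(q)-f(0)\bigr)$ by the quasi-convexity inequality and then adds a single multiplier $\lambda$ on the pointwise sector constraint. In the parameterization of Lemma~\ref{lemm:popov} this is exactly the sub-family $\mu=0$ (with $\nu$ normalized to $1$ and $\lambda$ free): you never use the finite-sector bound on the potential itself, $f(q)-f(0)\le (L/2)|q|^2$, which is what the $-rL|y|^2$ term in the supply rate $\sigma_1$ (the multiplier $\mu$) encodes. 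The optimization carried out in \S\ref{subsect:H0quasifinitesectorproof} shows that over the $\mu=0$ sub-family the largest certifiable sector is
\[
L \le m + \frac{2(\sigma-r)}{3r-2\sigma}\,(m-4r\sigma+4r^2),
\]
i.e.\ only the first entry of the $\max$ in the statement. The second entry, $\frac{2(\sigma-r)}{3r-2\sigma}\cdot 2(m-2r\sigma+r^2)$, is attained only at the degenerate configuration $\lambda=0$, $\mu=\mu_\star=(3r-2\sigma)/(2\sigma-r)$, $\nu=1$ (see \eqref{eq:mustar}--\eqref{eq:bestLquasi}): there the pointwise sector inequality carries zero weight and the growth of the potential is controlled by a mix of $f\le (L/2)|q|^2$ and quasi-convexity, which is outside your family. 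Since the second entry is the larger one precisely when $m>2r^2$, and $8r^2-10r\sigma+4\sigma^2<2r^2$ throughout $2\sigma/3<r<\sigma$, this regime is a nonempty part of the theorem's hypotheses, so your plan cannot reach the stated bound on $L$ there no matter how cleverly $(a,b,c,\lambda)$ are chosen.

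Two smaller points. First, your reading of the $\max$ as a switch between two principal minors (Schur complements) of one fixed $3\times3$ form is not what happens: it is a switch between two extremal multiplier regimes ($\mu=0$, $\lambda>0$ versus $\lambda=0$, $\mu>0$), and recognizing this is what forces the extra term into the certificate. Second, in the $\lambda=0$ regime the paper does not (and cannot, in the critical cases) argue through a strictly positive definite $P$ with $\dot V\le -2rV$; it concludes via minimal stability and hyperstability (Proposition~\ref{prop:popovhyperstability2}, checking $\Phi(\infty,s)\not\equiv 0$), so insisting on $P+\diag(mI/2,\,0)>0$ would additionally lose the boundary values of $L$ even where your family applies. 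Finally, the "hard part" you defer --- showing the parameter inequalities are exactly the stated ones --- is the bulk of the paper's proof (concavity of $h$ in $l$, discriminant analysis in $(\mu,\nu)$, and the differential equation \eqref{eq:hode} identifying when $\mu=0$ is optimal), so even in the regime your certificate can handle the proposal is a plan rather than a proof.
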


 If $\nabla f$ belongs to a finite sector (but $f$ is not quasi-strongly convex), a more conservative Popov-type criterion establishes the following weaker result.
\begin{theo} \label{theo:H0finitesectortimeinvariant}
	Suppose that $m >0,\, \sigma>0,\, r >0$, and $L> m$ satisfy
	\[
	r < \sigma, \quad m \geq 2r\sigma, \quad L \leq  \frac{(2\sigma-r)m}{r}.
	\]
	If $\nabla f$ belongs to the sector $[m,L]$, then \eqref{eq:H0} is $r$-exponentially stable.
\end{theo}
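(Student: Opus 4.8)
The plan is to exhibit \eqref{eq:H0} as a Lur'e system $\dot z=Az+B\phi(Cz)$ and to apply a Popov-type frequency criterion from \S\ref{sect:expcont} (more conservative than Lemma~\ref{lemm:popov}, since it does not exploit \eqref{eq:quasiconvex}). Writing $z=(q,p)$ one has
\[
A=\begin{bmatrix}0&I\\0&-2\sigma I\end{bmatrix},\qquad B=\begin{bmatrix}0\\-I\end{bmatrix},\qquad C=\begin{bmatrix}I&0\end{bmatrix},\qquad \phi=\nabla f\in[m,L],
\]
and a direct computation gives $C(sI-A)^{-1}B=-\bigl(s(s+2\sigma)\bigr)^{-1}I$. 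Since this is scalar-valued up to the identity, the frequency-domain verification is effectively one-dimensional, and it produces a $2\times 2$ matrix $\Pi$ out of which one builds a Lyapunov function of Lur'e--Postnikov type,
\[
V(z)=\langle (\Pi\otimes I)z,z\rangle+\eta\bigl(f(q)-f(0)\bigr),\qquad \eta\ge 0,
\]
generalizing \eqref{eq:quadraticpluspotential}; the goal is to arrange $\dot V\le -2rV$, after which $r$-exponential stability follows from positivity of $V$ and the comparison $V(z)\asymp|z|^{2}$.

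The reduction of $\dot V\le -2rV$ to a frequency inequality is the step where the finite sector is used. Differentiating $V$ along trajectories produces a quadratic form in $(z,\phi(q))$ together with the non-quadratic remainder $2r\eta\bigl(f(q)-f(0)\bigr)$ coming from the exponential weight. The membership $\nabla f\in[m,L]$ yields the two-sided bound $\tfrac m2|q|^{2}\le f(q)-f(0)\le\tfrac L2|q|^{2}$: the lower bound gives positivity and coercivity of $V$, the upper bound lets one absorb the remainder by replacing it with $r\eta L|q|^{2}$, and this last manoeuvre is exactly what is unavailable when $L=\infty$. With this substitution the condition $\dot V\le -2rV$ becomes a frequency-independent quadratic inequality to be satisfied under the sector constraint, and the KYP/dissipativity machinery of \S\ref{sect:expcont} translates its feasibility into a frequency inequality for the $r$-shifted transfer function $g_{r}(s)=-1/\bigl((s-r)(s-r+2\sigma)\bigr)$, involving $m$, $L$, $\eta$, together with a minimal-stability hypothesis.

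I would then check these two conditions under the stated constraints. The minimal-stability hypothesis reduces to $\lambda^{2}+2(\sigma-r)\lambda+(m-2r\sigma+r^{2})$ being a Hurwitz polynomial, which holds since $r<\sigma$ and $m\ge 2r\sigma>2r\sigma-r^{2}$. For the frequency inequality, I would substitute $s=i\omega$, clear denominators, and reduce it to the nonnegativity on $t=\omega^{2}\ge 0$ of a quadratic polynomial $\Phi_{\eta}(t)$ whose leading and constant coefficients are positive for all parameters in range, so that nonnegativity is governed by the sign of a single discriminant that is affine in $\eta$ up to squaring. The multiplier $\eta\ge 0$ would then be selected explicitly as a function of $m,L,\sigma,r$ (guessed from the structure or from numerics, exactly as with the explicit $P$ after Theorem~\ref{theo:H0quasi}), and the point of the constraints $r<\sigma$, $m\ge 2r\sigma$, and $L\le(2\sigma-r)m/r$ is precisely that together they make such an $\eta$ exist, i.e.\ force the discriminant to be nonpositive. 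I expect the bound $L\le(2\sigma-r)m/r$ to be sharp for this criterion, with the discriminant vanishing on the boundary, where the multiplier $\eta$ is genuinely needed (the circle criterion $\eta=0$ no longer suffices there).

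The main obstacle is twofold. The analytic core is the last step: identifying the exact frequency inequality produced by the criterion, keeping track of the $\eta$-dependent contribution of the remainder term, choosing $\eta$, and then verifying the discriminant inequality over the whole parameter box --- a finite but delicate computation in which obtaining the sharp constant $L\le(2\sigma-r)m/r$, rather than something more conservative, is the most delicate point. The structural obstacle is that on the boundary of the parameter region the frequency inequality is non-strict, and $g_{r}$ has a pole at $s=r$ in the open right half-plane and need not be minimal, so stability cannot be read off from a naive Lyapunov estimate; here one invokes Popov's hyperstability theorem as proved in \S\ref{sect:expcont}, which is tailored to exactly this non-strict, non-minimal, marginally stable situation.
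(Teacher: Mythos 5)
Your strategy is essentially the paper's: Theorem \ref{theo:H0finitesectortimeinvariant} is proved there by applying Lemma \ref{lemm:popov} with $\nu=0$ (your $\eta$ is the derivative multiplier $\mu$), where the finite-sector bound $f(w)\le (L/2)|w|^2$ is exactly what absorbs the exponential-weight remainder, and the non-strict, marginal situations are handled through Popov's hyperstability theorem (Proposition \ref{prop:popovhyperstability2}); your Lur'e--Postnikov function $\langle Pz,z\rangle+\eta\,(f(q)-f(0))$ is the time-domain counterpart noted after Lemma \ref{lemm:popov}, and your minimal-stability check and the two-sided bound $\tfrac m2|q|^2\le f(q)-f(0)\le \tfrac L2|q|^2$ are both correct (the paper additionally loop-shifts to the sector $[0,L-m]$, as in \S\ref{subsect:dissipativeprelim}, so that Lemma \ref{lemm:popov} applies verbatim, but that is cosmetic).

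The genuine gap is that the quantitative heart of the theorem is deferred rather than proved: you never exhibit the multiplier nor verify that the hypotheses $m\ge 2r\sigma$ and $L\le (2\sigma-r)m/r$ are precisely what make the frequency inequality feasible --- ``guessed from numerics'' and ``I expect the bound to be sharp'' stand in for what is, in the paper, the entire content of \S\ref{subsect:H0finitesectortimeinvariantproof}. Moreover your forecast of how that verification goes is off in a way that matters: after clearing denominators the inequality reads $\omega^4+\beta\omega^2+\gamma\ge 0$ with both $\beta$ and $\gamma$ depending on the multiplier, and the constant term $\gamma$ is \emph{not} automatically positive; its nonnegativity is itself a binding constraint on the multiplier (in the loop-shifted variables, $\gamma\ge 0$ forces $lr\mu\le m-2r\sigma+r^2$), so feasibility is not governed by a single discriminant affine in $\eta$. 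One must maximize the admissible sector length over the multiplier, compare the discriminant branch against the branch where both coefficients of the quadratic in $\omega^2$ are nonnegative (Lemma \ref{lemm:relaxed1}), and locate the unique critical point; this yields $\mu_\star=(m-2r\sigma)/(m(2\sigma-r))$ --- nonnegative exactly when $m\ge 2r\sigma$, which is where that hypothesis enters --- and the optimal half-length $l_\star=2m(\sigma-r)/r$, i.e.\ $L\le m+l_\star=(2\sigma-r)m/r$. Until this optimization (or an explicit multiplier with a verified discriminant inequality over the whole parameter range) is carried out, your proposal is a correct plan but not yet a proof.
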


We can also consider the case where $\nabla f$ is time-dependent. Of course some mild conditions should be imposed to ensure that \eqref{eq:lureproblem} is solvable, but these are immaterial for the analysis below --- it certainly suffices to assume that $\nabla f$ is jointly continuous in $(t,z)$. Now suppose that $z \mapsto \nabla f(t,z)$ belongs to a sector $[m,L]$ uniformly in $t\geq 0$. A version of the classical circle criterion, which in the time domain corresponds to a quadratic Lyapunov function 
$V(z) = \langle Pz,z\rangle$, 
yields the following.

\begin{theo} \label{theo:H0finitesector}
	Suppose that $m>0,\,\sigma>0,\, r > 0$, and $L>m$ satisfy 
	\[
	r < \sigma, \quad m \geq 2r\sigma - r^2, \quad L \leq  m + 4(\sigma -r)(  \sigma - r +  (m-2r\sigma + r^2)^{1/2} ).
	\] 
	If $x\mapsto \nabla f(t,z)$ belongs to the sector $[m,L]$ for all $t\geq 0$, then \eqref{eq:H0} is $r$-exponentially stable.
\end{theo}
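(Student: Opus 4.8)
The plan is to recast \eqref{eq:H0} as a Lur'e system and apply the frequency-domain circle criterion developed in \S\ref{sect:expcont}. Writing $z=(q,p)$ and taking
\[
A=\begin{bmatrix} 0 & I \\ 0 & -2\sigma I\end{bmatrix},\qquad B=\begin{bmatrix} 0 \\ -I\end{bmatrix},\qquad C=\begin{bmatrix} I & 0\end{bmatrix},
\]
the system \eqref{eq:H0} becomes $\dot z=Az+B\phi(t,Cz)$ with $\phi(t,\cdot)=\nabla f(t,\cdot)$ in the sector $[m,L]$ uniformly in $t\geq 0$. Since each of $A,B,C$ is a $2\times2$ scalar matrix tensored with $I_d$, the analysis reduces to the scalar transfer function $G(s)=C(sI-A)^{-1}B=-\bigl(s(s+2\sigma)\bigr)^{-1}$, and the multiplier furnished by the criterion has the form $\langle Pz,z\rangle$ with $P$ a $2\times2$ matrix tensored with $I_d$ — the quadratic part of \eqref{eq:quadraticpluspotential} with no potential term. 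One then invokes the circle criterion: $r$-exponential stability follows once one verifies the critical-case hypotheses on $(A+rI,B)$ together with the frequency inequality, which for a sector $[m,L]$ with $0<m<L<\infty$ at rate $r$ reads
\[
mL\,\bigl|G(-r+i\omega)\bigr|^2-(m+L)\,\Re G(-r+i\omega)+1\geq 0\qquad(\omega\in\RR),
\]
plus its limit as $\omega\to\infty$ (automatic here, since $G(-r+i\omega)\to 0$); equivalently, $G(-r+i\omega)$ must avoid the open disk with real diameter $[1/L,1/m]$.

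The substance of the argument is verifying this frequency inequality. Plugging $s=-r+i\omega$ into $G$ and clearing the positive denominator $\bigl|(-r+i\omega)(2\sigma-r+i\omega)\bigr|^2$, the inequality is equivalent to $P(\omega^2)\geq 0$, where, with $c:=2r\sigma-r^2$,
\[
P(u)=u^2+\bigl(2c+4(\sigma-r)^2-m-L\bigr)u+(m-c)(L-c).
\]
This is an upward parabola in $u=\omega^2$, so it suffices to show $P(u)\geq 0$ for all $u\geq 0$. The hypothesis $m\geq 2r\sigma-r^2$ says $m\geq c$, and with $L>m$ this gives $P(0)=(m-c)(L-c)\geq 0$ (this is exactly the $\omega=0$ case, where $G(-r)=\bigl(r(2\sigma-r)\bigr)^{-1}$ must lie at or to the right of $1/m$). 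A short computation shows the discriminant of $P$ equals
\[
\bigl(L-m-4(\sigma-r)^2\bigr)^2-16(\sigma-r)^2(m-c),
\]
which is $\leq 0$ precisely when $L-m$ lies between $4(\sigma-r)^2\pm 4(\sigma-r)\sqrt{m-c}$; the upper endpoint is exactly the stated bound $L\leq m+4(\sigma-r)\bigl(\sigma-r+(m-2r\sigma+r^2)^{1/2}\bigr)$. When $L-m$ falls below the lower endpoint — which forces $m-c<(\sigma-r)^2$ — one checks directly that the vertex of $P$ sits at $u\leq 0$, so $P$ is nondecreasing on $[0,\infty)$ and $P(0)\geq 0$ still gives $P\geq 0$ there. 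Hence in every case allowed by the hypotheses, $P\geq 0$ on $[0,\infty)$ and the frequency inequality holds.

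With the frequency inequality established, the circle criterion of \S\ref{sect:expcont} produces a positive-definite $P$ and the differential inequality $\tfrac{d}{dt}\langle Pz,z\rangle\leq -2r\langle Pz,z\rangle$ along trajectories of \eqref{eq:H0}; this forces solutions to be bounded (hence global, by the standing continuity assumption on $\nabla f$) and to obey $|z(t)|\leq\kappa(|z(0)|)e^{-rt}$, which is $r$-exponential stability. I expect the genuine obstacle to be not the algebra but the critical nature of the linear block: $A$ is only marginally stable and not minimal, and after the shift $A+rI$ has an eigenvalue $r>0$ in the open right half-plane, so the textbook circle criterion does not apply and the frequency inequality degenerates to an equality at $\omega=0$ when $m=2r\sigma-r^2$ (and at an interior frequency when $L$ attains its bound). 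It is precisely the refined, non-strict version of the circle criterion from \S\ref{sect:expcont}, designed for this situation, that makes the argument go through; the exact threshold on $L$ in the statement is then dictated by the discriminant computation above.
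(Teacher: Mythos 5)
Your proposal is correct and takes essentially the same route as the paper: your disk inequality for the unshifted plant $G(s)=-1/(s(s+2\sigma))$ is, after the loop shift $g=f-(m/2)|q|^2$, identical to the paper's condition $\Re H(i\omega-r)\le (L-m)^{-1}$, and your quartic $P(\omega^2)$ together with the discriminant/vertex case analysis reproduces exactly the computation of $l_{\sup}$ in Lemma \ref{lemm:H0finitesectoraux}. The hypotheses of the non-strict circle criterion (Lemma \ref{lemm:circle}) that you invoke but leave unchecked do hold here: $B\neq 0$, $A+kBC+rI$ is Hurwitz for any $k$ in the sector exceeding $2r\sigma-r^2$ (Lemma \ref{lemm:tau=0linearconstraint}, which covers the critical case $m=2r\sigma-r^2$ since $L>m$), and $\det\Phi(\eta,\zeta)\not\equiv 0$ because the sector is finite (remark following Lemma \ref{lemm:popov}).
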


Theorem \ref{theo:H0finitesector} complements Theorem \ref{theo:H0quasifinitesector}  when $2r\sigma-r^2 \leq m < 8r^2 - 10r\sigma + 4 \sigma^2$, and complements Theorem \ref{theo:H0finitesectortimeinvariant} in the range  $2r\sigma-r^2 \leq m < 2r\sigma$.

When $\tau$ is allowed to be positive, a quantitative version of the Aizerman conjecture holds for \eqref{eq:H}: if $m,\,\sigma>0$ are fixed, then the optimal rate $r_\star$ that holds uniformly for linear $\nabla f$ in the sector $[m,\infty]$ as $\tau$ ranges over $[0,\infty)$ also holds for the nonlinear problem, with the same optimal value of $\tau$. This is true even if $\nabla f$ has time-dependence.

\begin{theo} \label{theo:tau>0}
Given $m>0$ and $\sigma > 0$, let
\[
r_\star=\frac{3\sigma + \sqrt{2 m+\sigma^2}}{2}, \quad \tau_\star = \frac{\sigma + \sqrt{2 m+\sigma^2}}{m}.
\]
If $x \mapsto \nabla f(t,z)$ belongs to the sector $[m,\infty]$ for all $t \geq 0$, then \eqref{eq:H} is $r_\star$-exponentially stable.
\end{theo}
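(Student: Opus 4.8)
The plan is to treat \eqref{eq:H}, with the damping parameter set to $\tau=\tau_\star$, as a Lur'e system
\[
\dot z = Az + B\,\nabla f(t,Cz), \qquad
A=\begin{bmatrix} 0 & I \\ 0 & -2\sigma I\end{bmatrix},\quad
B=\begin{bmatrix} -\tau_\star I \\ -I\end{bmatrix},\quad
C=\begin{bmatrix} I & 0\end{bmatrix},
\]
with $z=(q,p)$; the hypothesis that $\nabla f(t,\cdot)$ lies in the sector $[m,\infty]$ is exactly the bound $\langle \nabla f(t,w),w\rangle \ge m|w|^2$ for all $t\ge 0$ and $w\in\RR^d$. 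Because $\nabla f$ is allowed to depend on $t$, the Popov criterion of Lemma~\ref{lemm:popov} does not apply, and I would instead use the time-domain form of the circle criterion: it suffices to exhibit a symmetric positive-definite $P$ for which $V(z)=\langle Pz,z\rangle$ satisfies $\dot V \le -2r_\star V$ along every solution of \eqref{eq:H}, using only the lower sector bound. From $c_1|z|^2 \le V(z)\le c_2|z|^2$ this gives $|z(t)|\le (c_2/c_1)^{1/2}|z(0)|e^{-r_\star t}$, hence $r_\star$-exponential stability with a linear comparison function, and global existence of solutions follows from the resulting a priori bound.

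I would take the natural ansatz $V(z)=a|q|^2+2b\langle q,p\rangle+c|p|^2$ and compute $\dot V$ along \eqref{eq:H}. Writing $g=\nabla f(t,q)$, the coefficients of $|q|^2$ and $|g|^2$ vanish identically, leaving $\dot V$ as a linear combination of $\langle q,p\rangle$, $|p|^2$, $\langle q,g\rangle$ and $\langle p,g\rangle$. The sector bound controls $\langle q,g\rangle$ from below but says nothing about $\langle p,g\rangle$, so the coefficient of $\langle p,g\rangle$ must be forced to zero, which gives $c=-b\tau_\star$ (hence $b\le 0$), while the coefficient of $\langle q,g\rangle$ must be $\le 0$, so that $\langle q,g\rangle\ge m|q|^2$ may be used to absorb $g$. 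After this substitution, $\dot V + 2r_\star V \le 0$ is equivalent to negative semidefiniteness of an explicit symmetric $2\times2$ matrix $M=M(a,b)$, whose entries are affine in $a$ and $b$ and which acts on $(|q|,|p|)$ with the $(1,2)$ entry multiplying $\langle q,p\rangle\in[-|q||p|,|q||p|]$.

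The point of the proof is that the definitions of $r_\star$ and $\tau_\star$ are precisely what force $M$ to vanish. They are equivalent to the two scalar identities $\tau_\star(2\sigma-r_\star)=-1$ and $2(r_\star-\sigma)=m\tau_\star$ (both consequences of $m\tau_\star^2-2\sigma\tau_\star-2=0$ together with $r_\star=2\sigma+\tau_\star^{-1}$). The first makes $M_{22}=0$; since a negative-semidefinite matrix with a vanishing diagonal entry has the corresponding off-diagonal entry zero, we are forced to have $M_{12}=0$, which pins down $b=-1/(m\tau_\star)$ and, via $c=-b\tau_\star$, also $c=1/m$, with $a$ normalized to $1$; and then the second identity makes $M_{11}=0$ as well. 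It remains only to verify the positivity conditions — that $P$ is positive definite, i.e.\ $ac>b^2$, and that the coefficient of $\langle q,g\rangle$ really is $\le 0$ — and both of these reduce to $m\tau_\star^2>1$, which holds because $(m\tau_\star)^2=(\sigma+\sqrt{2m+\sigma^2})^2>2m$, so in fact $m\tau_\star^2>2$. With $M=0$ one has $\dot V+2r_\star V = -\bigl(2\tau_\star-2/(m\tau_\star)\bigr)\bigl(\langle q,\nabla f(t,q)\rangle - m|q|^2\bigr)\le 0$, and the Lyapunov function that emerges is
\[
V(z)=|q|^2-\frac{2}{m\tau_\star}\langle q,p\rangle+\frac1m|p|^2 = \frac1m\Bigl(\bigl|p-\tau_\star^{-1}q\bigr|^2+(m-\tau_\star^{-2})|q|^2\Bigr).
\]

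I do not expect a genuine obstacle beyond the algebraic bookkeeping — the only real content is recognizing that the two scalar identities defining $r_\star$ and $\tau_\star$ collapse $M$ to zero, i.e.\ guessing $P$. One point worth flagging is that $A+r_\star I$ is not Hurwitz (its eigenvalues are $r_\star>0$ and $r_\star-2\sigma$), so the shifted open-loop system is unstable and only the critical, non-strict version of the circle criterion is in force; verifying $\dot V\le -2r_\star V$ directly, however, sidesteps this entirely. Finally, I would note that this same $P$, being built only from the lower sector bound, handles the time-dependent case with no extra work — this is what makes the statement a quantitative Aizerman-type result — and that the construction is consistent with the linear spectral analysis, in which, at $\tau=\tau_\star$, the worst-case rate $r_\star$ is attained simultaneously at the sector boundary $\nabla^2f\equiv mI$ and asymptotically as $\nabla^2 f\to\infty$.
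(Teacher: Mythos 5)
Your proof is correct, but it takes a genuinely different route from the paper. The paper treats Theorem \ref{theo:tau>0} as an application of the frequency-domain circle criterion (Lemma \ref{lemm:circle}) applied to the loop-shifted Lur'e form of \S\ref{subsect:dissipativeprelim}: one checks that at $(r_\star,\tau_\star)$ the frequency inequality $\Re H(i\omega-r_\star)\le 0$ holds with \emph{both} coefficients $\beta,\gamma$ of the associated quadratic vanishing, so the inequality is non-strict (indeed an identity), and the conclusion then rests on the hyperstability machinery (Proposition \ref{prop:popovhyperstability2}, with Lemma \ref{lemm:tau>0constraint} supplying minimal stability and Lemma \ref{lemm:barabanov} supplying, only a posteriori, a nondegenerate quadratic storage function, since Proposition \ref{prop:popovhyperstability1} fails in this critical case). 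You instead bypass all of that by exhibiting the critical storage function explicitly, $P=\tfrac1m\begin{bmatrix} mI & -\tau_\star^{-1}I\\ -\tau_\star^{-1}I & I\end{bmatrix}$, and verifying $\dot V\le -2r_\star V$ by hand; I checked the algebra and it is right: the two identities $\tau_\star(r_\star-2\sigma)=1$ and $m\tau_\star=2(r_\star-\sigma)$ kill, respectively, the $|p|^2$ and (after $M_{12}=0$ pins down $b=-1/(m\tau_\star)$) the $|q|^2$ coefficients, the $\langle p,\nabla f\rangle$ term is absent by the choice $c=-b\tau_\star$, the remaining coefficient of $\langle q,\nabla f\rangle-m|q|^2$ is $-2(\tau_\star-1/(m\tau_\star))<0$ because $m\tau_\star^2=2+2\sigma\tau_\star>2$, and the same inequality gives $P>0$; the definiteness margin also makes the time-dependent sector hypothesis and global existence immediate. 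What your argument buys is a short, self-contained, purely time-domain proof that moreover displays the (essentially unique up to scale) Lyapunov function whose existence the paper only infers abstractly; what it gives up is the systematic frequency-domain parameter reduction that the paper uses to \emph{find} $(r_\star,\tau_\star)$ in the first place (Lemma \ref{lemm:tau>0constraint} and \eqref{eq:optimalinfinite}) and to argue sharpness, and of course it does not illustrate the critical-case hyperstability results that are the paper's main point. Your side remark about the non-Hurwitz shifted open loop refers to the unshifted $A$ at $k=0$; in the paper's loop-shifted coordinates the relevant hypothesis is that $A+\delta BC+r_\star I$ is Hurwitz for every $\delta>0$, which is exactly the marginal situation Lemma \ref{lemm:circle} is designed to tolerate --- but since you verify the dissipation inequality directly, nothing in your argument depends on this.
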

As mentioned above, this result  is sharp in the following sense: if $r > r_\star$, then for each $\tau \geq 0$ there exists $k \in [m/2,\infty)$ such that the conclusion of  Theorem \ref{theo:tau>0} is false when $f(q)  = k|q|^2$. For simplicity, do not consider the finite sector analogues of Theorem \ref{theo:tau>0}.

All of the results in this section are established by verifying that certain frequency criteria hold, parameterized by a number of Lagrange multipliers that must be chosen appropriately. We stress that the results above all represent optimal applications of these criteria with respect to the multipliers.
As far as we are aware, Theorems \ref{theo:H0quasi}, \ref{theo:H0quasifinitesector}, \ref{theo:H0finitesectortimeinvariant}, and \ref{theo:H0finitesector} provide the best known global convergence rates for \eqref{eq:H0} under the given hypotheses. Theorem \ref{theo:tau>0}, which quantifies the stabilizing role played $\tau > 0$, also appears to be new; weaker results appear in \cite{shi2018understanding,attouch2019first}.

\subsection{Notation}
\begin{inparaenum}
		\item Let $A(s)$ be a matrix with entries in the field $\KK(s)$ of rational functions. Denote by $\nrank A(s)$ its \emph{normal rank}, namely its rank over $\KK(s)$. Equivalently, 
	\[
	\nrank A(s) = \max\{ \rank A(s_0): s_0 \in  \CC \text{ is not a pole of } A(s)\}.
	\]
	and $\nrank A(s) = \rank A(s_0)$ for all but finitely many $s_0 \in \CC$. 
	
	\item Given a square matrix $M$, we write $\Re M = (M+ M^*)/2$.

\end{inparaenum}

\section{Exponential stability} \label{sect:expcont}

\subsection{Dissipativity}
A detailed treatment of dissipativity theory can be found in the foundational papers \cite{willems1972dissipativeI,willems1972dissipativeII}, as well as  \cite{hill1976stability,hill1980dissipative}. In this section we provide a brief review of the relevant material.  A parallel development for nonlinear systems is given in \cite{willems1972dissipativeI,hill1976stability}.
Consider a linear time-invariant (LTI) system with state $x \in \KK^m$, input $u \in \KK^n$:
\begin{equation} \label{eq:LTI}
\begin{aligned}
\dot x &= Ax + Bu.
\end{aligned}
\end{equation}
Unless otherwise specified, we restrict our attention to inputs $u \in L^2_\loc(\RR_+; \KK^n)$ which are locally square-integrable. Dissipativity is defined with respect to a fixed quadratic form $\sigma : \KK^{m+n} \rightarrow \RR$,
referred to as the supply rate. Write
 \begin{equation} \label{eq:generalsupply}
\sigma(x,u) = \langle Q x,x \rangle + 2\Re \langle Sx,u \rangle + \langle Ru,u \rangle,
\end{equation}
where $Q,\,S,\,R$ are matrices of the appropriate sizes with $Q =Q^*$ and $R=R^*$. For $[t_0,t_1] \subset [0,\infty]$,
the integral
\begin{equation} \label{eq:energy}
E(x,u,t_0,t_1) = \int_{t_0}^{t_1} \sigma(x(s),u(s)) \, ds
\end{equation}
is interpreted as the energy supplied to the system over the time period $[t_0,t_1]$ when driven by the input $u$. Finally, let 
\begin{equation} \label{eq:Mdef}
M = \begin{bmatrix}
Q &S \\
S^* & R
\end{bmatrix}
\end{equation}
denote the Hermitian matrix associated with $\sigma$.

\begin{defi} \label{defi:cyclodissipative}
A function $V : \KK^m \rightarrow \RR$ is called a \emph{storage function} for $(A,B,M)$ if the dissipation inequality
\begin{equation} \label{eq:virtualstorage}
V(x(t_1)) - V(x(t_0)) \leq E(x,u,t_0,t_1)
\end{equation}
holds 
whenever $[t_0,t_1]\subset [0,\infty)$ and $\dot x = Ax + Bu$. The triple $(A,B,M)$ is said to be \emph{cyclodissipative} if it admits a storage function.
\end{defi}

In many references, storage functions are required to be nonnegative.
There are at least two distinguished candidates for storage functions (when they exist). Define the available storage $\avail$ by
\[
\begin{gathered}
\avail(x_0) = \sup \{ -E(x,u,0,T): \dot x = Ax + Bu, \, T\geq0, \, x(0) = x_0, \, x(T) = 0\},
\end{gathered}
\]
which represents the greatest amount of energy that can be extracted in motions driving the system from state $x_0$ to the origin. Similarly, the required supply $\req$ is defined by
\[
\begin{gathered}
\req(x_0) = \inf  \{ E(x,u,0,T) : \dot x = Ax + Bu, \, T\geq0, \, x(0) = 0, \, x(T) = x_0 \}.
\end{gathered}
\]
If $(A,B)$ is controllable, then $\req < \infty$ and $\avail > -\infty$. The following result gives several characterizations of cyclodissipativity with respect to a given supply rate; the proof follows from straightforward manipulations of the definitions.
\begin{lemm}[\cite{willems1972dissipativeI,willems1972dissipativeII}] \label{lemm:cyclodissipative}
If  $(A,B)$ is controllable, then the following conditions are equivalent:
\begin{enumerate} \itemsep6pt
	\item  $(A,B,M)$ is cyclodissipative,
	\item $\req > -\infty$,
	\item $\avail < \infty$,
	\item  $E(x,u,0,T) \geq 0$ whenever $\dot x = Ax + Bu$ and $T \geq 0$ is such that $x(0) = x(T)$.
\end{enumerate}
If any of these conditions are satisfied, then $\req (0) = \avail(0) = 0$, and any storage function normalized by $V(0) = 0$ satisfies 
\[
 \avail \leq V \leq \req.
\]
\end{lemm}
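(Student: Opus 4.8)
The plan is to establish the chain $(1)\Rightarrow(4)\Rightarrow(2)$, $(4)\Rightarrow(3)$, $(2)\Rightarrow(1)$, and $(3)\Rightarrow(1)$, and then read off the normalization statements, using nothing beyond the definitions together with two elementary observations: finite segments of trajectories concatenate (the concatenated input is still in $L^2_\loc$ and the state stays continuous across the junction), and controllability of $(A,B)$ guarantees that for any $x_0,x_1\in\KK^m$ the set of trajectories steering $x_0$ to $x_1$ in finite time is nonempty, each such trajectory having finite supplied energy over its compact time interval. Time-invariance of \eqref{eq:LTI} lets one translate all time intervals so that they begin at $0$.

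The implication $(1)\Rightarrow(4)$ is immediate: if $V$ is a storage function and $x(0)=x(T)$ then $0=V(x(T))-V(x(0))\leq E(x,u,0,T)$. For $(4)\Rightarrow(2)$ I would fix $x_0$ and, by controllability, a single trajectory steering $x_0$ to $0$ with supplied energy $C$; concatenating an arbitrary trajectory from $0$ to $x_0$ of energy $E$ with this fixed one yields a trajectory with $x(0)=x(T)=0$, so $(4)$ gives $E+C\geq 0$, and taking the infimum over such trajectories gives $\req(x_0)\geq -C>-\infty$. The implication $(4)\Rightarrow(3)$ is symmetric: fixing a trajectory from $0$ to $x_0$ of energy $C'$ and placing an arbitrary trajectory from $x_0$ to $0$ of energy $E$ in front of it, $(4)$ gives $C'+E\geq 0$, hence $-E\leq C'$, and the supremum gives $\avail(x_0)\leq C'<\infty$.

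For $(2)\Rightarrow(1)$ I would show $\req$ is itself a storage function: controllability makes $\req<\infty$ everywhere and $(2)$ makes it $>-\infty$, so it is real-valued, and given a trajectory $x$ on $[0,T]$ with endpoints $x_0,x_1$, choosing for $\varepsilon>0$ a trajectory from $0$ to $x_0$ of energy at most $\req(x_0)+\varepsilon$ and concatenating it with $x$ produces a trajectory from $0$ to $x_1$ of energy at most $\req(x_0)+\varepsilon+E(x,u,0,T)$, whence $\req(x_1)\leq\req(x_0)+\varepsilon+E(x,u,0,T)$, and then $\varepsilon\to 0$. For $(3)\Rightarrow(1)$ I would show $\avail$ is a storage function, being careful with signs: for a trajectory $x$ on $[0,T]$ with endpoints $x_0,x_1$ and any trajectory $y$ from $x_1$ to $0$ of energy $E_y$, the concatenation ($x$ then $y$) steers $x_0$ to $0$ with energy $E(x,u,0,T)+E_y$, so $\avail(x_0)\geq -E(x,u,0,T)-E_y$; rearranging and taking the supremum over $y$ gives $\avail(x_1)\leq\avail(x_0)+E(x,u,0,T)$.

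For the last assertion, the trivial trajectory with $T=0$ gives $\req(0)\leq 0$ and $\avail(0)\geq 0$, while $(4)$ (available once the equivalences are in hand) applied to trajectories with $x(0)=x(T)=0$ gives $\req(0)\geq 0$ and $\avail(0)\leq 0$, so both vanish. If $V$ is any storage function with $V(0)=0$, then \eqref{eq:virtualstorage} applied to a trajectory from $0$ to $x_0$ gives $V(x_0)\leq E(x,u,0,T)$, and the infimum gives $V(x_0)\leq\req(x_0)$; applied to a trajectory from $x_0$ to $0$ it gives $-V(x_0)\leq E(x,u,0,T)$, and the supremum gives $\avail(x_0)\leq V(x_0)$. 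The whole argument is bookkeeping; the only substantive use of the hypotheses is controllability, which keeps the trajectory classes appearing in the suprema and infima nonempty (and, together with $(2)$ or $(3)$, keeps $\req$ or $\avail$ finite). The one genuine pitfall is getting the signs right in the inequalities that define $\avail$ and $\req$.
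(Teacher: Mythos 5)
Your proposal is correct: the cycle of implications, the concatenation arguments using controllability and time-invariance, and the verification that $\req$ and $\avail$ are themselves storage functions (with the signs handled correctly) are exactly the ``straightforward manipulations of the definitions'' that the paper alludes to, since it cites Willems and omits the proof. Nothing is missing, and the normalization statements $\req(0)=\avail(0)=0$ and $\avail \leq V \leq \req$ are derived just as intended.
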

Because the underlying system is linear, even more is true: there exist Hermitian matrices $P_- \leq P_+$ satisfying 
\begin{equation} \label{eq:maximumsolution}
\req(x) = \langle P_- x,x\rangle, \quad \avail = \langle P_+ x,x\rangle,
\end{equation}
and if $(A,B)$ is controllable, then $(A,B,M)$ is cyclodissipative if and only if it admits a quadratic storage function. When $\KK = \RR$, the matrices $P_\pm$ can be chosen with real entries.

\begin{defi} \label{defi:dissipative}
The triple $(A,B,M)$ is said to be \emph{dissipative} if it admits a \emph{nonnegative} storage function.
\end{defi}

The following analogue of Lemma \ref{lemm:cyclodissipative} holds.

\begin{lemm} [\cite{willems1972dissipativeI,willems1972dissipativeII}] \label{lemm:dissipative}
If $(A,B)$ is controllable, then the following conditions are equivalent:
\begin{enumerate} \itemsep6pt
	\item  $(A,B,M)$ is dissipative,
	\item $\req  \geq 0$,
	\item $E(x,u,0,T) \geq 0$ for every $T\geq 0$ whenever $\dot x = Ax + Bu$ and $x(0) = 0$.
\end{enumerate}
\end{lemm}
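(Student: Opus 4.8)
The plan is to prove the cycle of implications $(1)\Rightarrow(3)\Rightarrow(2)\Rightarrow(1)$. The implications linking $(2)$ and $(3)$ are a matter of unwinding the definition of $\req$ together with controllability, and $(2)\Rightarrow(1)$ rests on Lemma \ref{lemm:cyclodissipative}; the only step that needs an idea is $(1)\Rightarrow(3)$, which exploits the scale-invariance of the linear system \eqref{eq:LTI}.

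For $(1)\Rightarrow(3)$, suppose $V\geq 0$ is a storage function for $(A,B,M)$, fix a solution $\dot x = Ax+Bu$ with $x(0)=0$, and fix $T\geq 0$. For every $\lambda>0$ the pair $(\lambda x,\lambda u)$ is again a solution of \eqref{eq:LTI} with $(\lambda x)(0)=0$, and since $\sigma$ is a quadratic form one has $E(\lambda x,\lambda u,0,T)=\lambda^2 E(x,u,0,T)$. Applying the dissipation inequality \eqref{eq:virtualstorage} to $(\lambda x,\lambda u)$ gives
\[
\lambda^2 E(x,u,0,T) \geq V(\lambda x(T)) - V(0) \geq -V(0),
\]
so $E(x,u,0,T)\geq -V(0)/\lambda^2$ for all $\lambda>0$, and letting $\lambda\to\infty$ yields $E(x,u,0,T)\geq 0$, which is $(3)$. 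I expect this to be the crux: nonnegativity of $V$ is used exactly to absorb the possibly nonzero constant $V(0)$ in the limit, and this is needed because Definition \ref{defi:dissipative} permits an arbitrary nonnegative storage function rather than a normalized or quadratic one.

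For $(3)\Rightarrow(2)$, fix $x_0\in\KK^m$; controllability of $(A,B)$ provides a solution with $x(0)=0$ and $x(T)=x_0$ for some $T\geq 0$, by $(3)$ every such solution satisfies $E(x,u,0,T)\geq 0$, and taking the infimum over all of them gives $\req(x_0)\geq 0$. For $(2)\Rightarrow(1)$, note first that $\req<\infty$ by controllability and $\req\geq 0>-\infty$ by hypothesis, so $\req$ is real-valued; by Lemma \ref{lemm:cyclodissipative} the triple $(A,B,M)$ is then cyclodissipative and $\req$ (equivalently $\langle P_- x,x\rangle$) is itself a storage function with $\req(0)=0$ --- alternatively one verifies \eqref{eq:virtualstorage} for $\req$ directly by concatenating a solution steering $0$ to $x(t_0)$ with the given solution on $[t_0,t_1]$ and passing to the infimum. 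Since $\req\geq 0$, this exhibits a nonnegative storage function, so $(A,B,M)$ is dissipative and the cycle closes.
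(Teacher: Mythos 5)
Your proof is correct. The paper does not actually prove this lemma --- it is quoted from Willems' papers, in the same spirit as Lemma \ref{lemm:cyclodissipative} ("straightforward manipulations of the definitions") --- and your cycle $(1)\Rightarrow(3)\Rightarrow(2)\Rightarrow(1)$ is exactly the standard argument: the scaling trick in $(1)\Rightarrow(3)$ correctly disposes of the fact that a nonnegative storage function need not satisfy $V(0)=0$, and the concatenation argument showing that $\req$ itself satisfies the dissipation inequality (rather than only invoking the ordering $\avail \leq V \leq \req$ stated in Lemma \ref{lemm:cyclodissipative}) closes the loop properly.
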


From the differential version of the dissipation inequality \eqref{eq:virtualstorage}, cyclodissipativity is equivalent to the existence of $P = P^*\in \KK^{m \times m}$ satisfying the linear matrix inequality 
\begin{equation} \label{eq:LMI} \tag{LMI}
\Lambda(P) = 
\begin{bmatrix}
2\Re A^*P & PB \\
B^*P & 0  
\end{bmatrix} - \begin{bmatrix}
Q &S^* \\
S & R
\end{bmatrix} \leq 0.
\end{equation} 
If $P$ can be chosen positive semidefinite, then $(A,B,M)$ is dissipative.

\subsection{KYP Lemma}
First we recall necessary and sufficient frequency conditions under which the linear matrix inequality \eqref{eq:LMI} admits a Hermitian solution $P$.
 Introduce the Popov function
 \[
 \Pi(\eta, \zeta) = 	\begin{bmatrix}
 (\bar \eta I - A)^{-1}B \\
 I
 \end{bmatrix}^* \begin{bmatrix}
 Q &  S^* \\
 S &  R
 \end{bmatrix} 	\begin{bmatrix}
 (\zeta I- A)^{-1}B \\
 I
 \end{bmatrix},
 \]
 which is a meromorphic function of $(\eta, \zeta)$. If $P \in \KK^{m\times m}$, then
 \begin{multline} \label{eq:popovpositive}
 \begin{bmatrix}
 (\bar \eta I - A)^{-1}B \\
 I
 \end{bmatrix}^* \begin{bmatrix}
 2\Re A^*P & PB \\
 B^*P & 0  
 \end{bmatrix} \begin{bmatrix}
 (\zeta I - A)^{-1}B \\
 I
 \end{bmatrix} \\ =(\zeta+ \eta)B^*(\eta I-A^*)^{-1}P (\zeta I-A)^{-1}B.
 \end{multline}
If $\Lambda(P) \leq 0$ admits a Hermitian solution, then it follows from \eqref{eq:popovpositive} and \eqref{eq:LMI} that the frequency condition 
 \begin{equation} \label{eq:KYPFDI} \tag{FDI}
 \Pi(-i\omega, i\omega) \geq 0 \text{ whenever } \omega \in \RR \text{ and } \det(i\omega I -A)\neq 0
 \end{equation}
 holds. Although the following version of the Kalman--Yakubovich--Popov (KYP) lemma is well-known, a short proof is included for the reader's convenience.
 
 \begin{lemm} \label{lemm:KYP}
If $(A,B)$ is controllable, then $\Lambda(P) \leq 0$ admits a solution $P=P^*$ if and only if \eqref{eq:KYPFDI} holds.
 \end{lemm}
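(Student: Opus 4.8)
The plan is to prove the nontrivial direction: assuming the frequency inequality \eqref{eq:KYPFDI} holds, construct a Hermitian solution $P$ of $\Lambda(P) \le 0$. The most economical route, given the dissipativity framework already set up, is to produce $P$ as the matrix representing one of the canonical storage functions — say $P_-$ with $\langle P_- x, x\rangle = \req(x)$ — and to show that \eqref{eq:KYPFDI} forces cyclodissipativity via Lemma \ref{lemm:cyclodissipative}. Concretely, by Lemma \ref{lemm:cyclodissipative}(4), it suffices to show that $E(x,u,0,T) \ge 0$ for every trajectory of $\dot x = Ax + Bu$ with $x(0) = x(T)$; then $\req$ is a quadratic storage function, $P := P_-$ satisfies the differential dissipation inequality, and this is exactly $\Lambda(P) \le 0$. (The converse direction is already recorded in the paragraph preceding the lemma via \eqref{eq:popovpositive} and \eqref{eq:LMI}.)

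The first step is to pass from the frequency condition to a statement about periodic trajectories. Given $T > 0$ and a $T$-periodic input $u \in L^2_{\loc}$, expand $u$ in a Fourier series $u(t) = \sum_{k} \hat u_k e^{i\omega_k t}$ with $\omega_k = 2\pi k/T$; the corresponding periodic solution $x$ of $\dot x = Ax + Bu$ (which exists and is unique provided $i\omega_k$ is not an eigenvalue of $A$ for any $k$, the generic case) has Fourier coefficients $\hat x_k = (i\omega_k I - A)^{-1} B \hat u_k$. By Parseval, $\tfrac1T E(x,u,0,T) = \sum_k \langle M\, (\hat x_k, \hat u_k)^*, (\hat x_k, \hat u_k)^*\rangle = \sum_k \Pi(-i\omega_k, i\omega_k)[\hat u_k, \hat u_k]$, which is $\ge 0$ term by term by \eqref{eq:KYPFDI}. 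So $E \ge 0$ on periodic trajectories avoiding the resonant frequencies. The second step is to remove the genericity caveat and extend from periodic trajectories to arbitrary ones with $x(0) = x(T)$: a trajectory with matched endpoints can be concatenated with itself and approximated, or one perturbs $T$ (equivalently $\omega_k$) slightly to avoid eigenvalues of $A$ on the imaginary axis, using continuity of $E$ in the data together with controllability to steer between $x(T)$ and $x(0)$; the finitely many bad frequencies from $\nrank$ considerations cause no trouble by a limiting argument.

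The main obstacle is the interplay between controllability, the possibility of purely imaginary eigenvalues of $A$, and the non-strict nature of \eqref{eq:KYPFDI} — i.e., exactly the "critical cases" the introduction emphasizes. When $A$ has eigenvalues on $i\RR$, the resolvent $(i\omega I - A)^{-1}$ blows up at those $\omega$, \eqref{eq:KYPFDI} says nothing there, and one cannot naively build periodic solutions; the fix is that \eqref{eq:KYPFDI} still controls $E$ at all other frequencies and controllability lets us realize any endpoint-matched trajectory as a limit of ones supported on good frequencies, so that $E \ge 0$ survives the passage to the limit. Alternatively — and perhaps cleaner to write — one argues directly that the quadratic form $\req$ is finite (using controllability for $\req < \infty$ and \eqref{eq:KYPFDI} via the periodic-trajectory computation for $\req > -\infty$, per Lemma \ref{lemm:cyclodissipative}), represents it by a Hermitian matrix $P_-$ as in \eqref{eq:maximumsolution}, and checks that the dissipation inequality for $\req$ is precisely the integrated form of $\Lambda(P_-) \le 0$. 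I expect the write-up to lean on Lemma \ref{lemm:cyclodissipative} to handle all the bookkeeping, reducing the real content to the one-line Parseval identity plus a density argument.
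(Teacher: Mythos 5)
Your overall scheme is close in spirit to the paper's: both reduce the nontrivial direction to Lemma \ref{lemm:cyclodissipative} plus a Fourier-side positivity computation. The paper, however, proves $\avail<\infty$ by using controllability to extend any trajectory steering $x^0$ to $0$ into a \emph{compactly supported} trajectory on $(-\infty,T]$ and then applies Plancherel on the whole line; there the finitely many frequencies with $\det(i\omega I-A)=0$ form a null set, so the identity $\hat x(\omega)=(i\omega I-A)^{-1}B\hat u(\omega)$ a.e.\ suffices and \eqref{eq:KYPFDI} gives nonnegativity of the frequency integral with no further work. Your route via condition (4) of Lemma \ref{lemm:cyclodissipative} and Fourier \emph{series} has a genuine gap at exactly this point: a matched-endpoint trajectory does extend $T$-periodically, but if $A$ has eigenvalues on $i\RR$ resonant with the grid $2\pi k/T$, the corresponding coefficients only satisfy $(i\omega_k I-A)\hat x_k=B\hat u_k$ with $\hat x_k$ not determined by $\hat u_k$; those terms are not of the form $\langle\Pi(-i\omega_k,i\omega_k)\hat u_k,\hat u_k\rangle$ and are not controlled by \eqref{eq:KYPFDI}, which is explicitly silent at such $\omega$. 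In the series the bad frequencies are atoms, not a null set, so the term-by-term positivity argument collapses precisely in the marginal cases (non-strict FDI, imaginary-axis spectrum) that the paper is designed to cover.

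The repairs you sketch do not close this. Perturbing $T$ cannot remove the resonance at $\omega=0$ when $A$ is singular, since the $k=0$ mode is resonant for every period; this case occurs in the paper's applications (e.g.\ Lemma \ref{lemm:tau=0linearconstraint}, case (2)). The claim that controllability lets you realize any closed trajectory as a limit of trajectories ``supported on good frequencies'' is not yet an argument: deleting resonant Fourier components of $u$ destroys the closure condition $x(0)=x(T)$, and re-closing the loop by a steering correction re-introduces arbitrary frequency content, so you would need a quantitative estimate showing the correction's energy can be made small while keeping the new period non-resonant --- which is exactly the delicate point being assumed. Your alternative suggestion (show $\req>-\infty$ directly ``via the periodic-trajectory computation'') inherits the same resonance problem. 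The simplest fix is the paper's device: extend by a compactly supported controllable piece and use Plancherel on $\RR$, so that the exceptional frequencies never contribute; otherwise you must supply a genuine limiting argument for the resonant atoms.
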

\begin{proof}
According to Lemma \ref{lemm:cyclodissipative}, to show that $\Lambda(P) \leq 0$ admits a solution (namely $(A,B,M)$ is cyclodissipative), it suffices to show that $\avail < \infty$, where the supply rate $\sigma$ is given by \eqref{eq:generalsupply}. Let $T \geq 0$. Suppose that $w = (x,u)$ satisfies
\begin{equation} \label{eq:admissiblexu}
\dot x = Ax + Bu, \quad x(0) = x^0, \quad x(T) = 0.
\end{equation}
Since $(A,B)$ is controllable, there exists $w_1 = (u_1,x_1)$ of compact support satisfying \eqref{eq:admissiblexu}, such that
\[
\supp w_1 \subset (-\infty,T], \quad w(t) = w_1(t) \text{ for } t\in [0,T].
\]
Moreover, the restriction of $w_1$ to $(-\infty,0)$ can be chosen to depend only on $x^0$. By Plancherel's formula, \eqref{eq:KYPFDI} implies that $E(x_1,u_1,-\infty,T) \geq 0$. In particular,
\[
E(x_1,u_1,-\infty,0) \geq -E(x,u,0,T),
\]
which shows that $\avail(x^0) < \infty$ for all $x^0 \in \KK^m$.
\end{proof}

Given a solution $P = P^*$ of $\Lambda(P) \leq 0$, one can always find $K \in \KK^{q \times m}$ and $L \in \KK^{q \times n}$ such that
\begin{equation} \label{eq:lure}
\begin{bmatrix}
2\Re A^*P & PB \\
B^*P & 0  
\end{bmatrix} - \begin{bmatrix}
{Q} &  {S}^* \\
{S} &  {R}
\end{bmatrix} =  -  \begin{bmatrix}
K^* \\ L^*
\end{bmatrix} \begin{bmatrix}
K & L
\end{bmatrix},
\quad \rank \begin{bmatrix}
K & L
\end{bmatrix} = q.
\end{equation}
A priori, the number of rows $q$ is bounded above by $m+n$. Referring to \eqref{eq:popovpositive}, any such decomposition allows us to write
\begin{equation} \label{eq:popovfactorization}
\Pi(\eta, \zeta) = (\zeta+ \eta)B^*(\eta I-A^*)^{-1}P (\zeta I-A)^{-1}B + G(\bar \eta)^*G(\zeta),
\end{equation}
where the transfer matrix $G(s) = L + K(sI-A)^{-1}B$ has $q$ rows. This in turn provides a spectral factorization
\begin{equation} \label{eq:spectralfactorization}
\Pi(-s,s) = G(-\bar s)^*G(s).
\end{equation}
Restricting \eqref{eq:spectralfactorization} to the imaginary axis shows that $q \geq \nrank \Pi(-s,s)$.

Recall the notation $P_+$ and $P_-$ for the Hermitian matrices corresponding to the required supply and available storage, respectively (when they exist). If $(A,B)$ is controllable and $(A,B,M)$ is dissipative, then
\begin{equation} \label{eq:rankattained}
q = \nrank \Pi(-s,s)
\end{equation}
for any factorization \eqref{eq:lure} associated with $P_\pm$. It is also well-known that the transfer matrix corresponding $G(s)$ to an extremal solution has certain stability properties. Since these facts will be used later on, we present an elementary proof adapted from \cite{megretski2010kyp}  (when $R$ is nonsingular the result follows from classical results about the algebraic Riccati equation \cite{willems1971least}).

\begin{lemm} \label{lemm:lure} Let $(A,B)$ be controllable, and suppose that $\Lambda(P) \leq 0$ admits a Hermitian solution. If $(K_\pm, L_\pm)$ is such that the factorization \eqref{eq:lure} holds for $P_\pm$, then
	\[
	\dim \ker \begin{bmatrix}A-sI & B \\
	K_\pm & L_\pm
	\end{bmatrix}^* = 0 \text{ whenever } \pm \Re s < 0.
	\]
\end{lemm}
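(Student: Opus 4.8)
The plan is to prove the two cases ($+$ and $-$) separately, and by symmetry (replacing $(A,B,K,L)$ by $(-A,-B,K,L)$, which swaps $P_+$ and $P_-$ and the roles of left/right half-planes) it suffices to treat, say, the claim that the kernel is trivial when $\Re s > 0$ for the factorization associated with $P_-$ (the required supply). So fix $s_0$ with $\Re s_0 > 0$ and suppose $(\xi^*, v^*)$ annihilates the matrix $\begin{bmatrix} A - s_0 I & B \\ K_- & L_- \end{bmatrix}$ on the left, i.e. $\xi^*(A - s_0 I) + v^* K_- = 0$ and $\xi^* B + v^* L_- = 0$. The goal is to show $\xi = 0$ and $v = 0$; since $\begin{bmatrix} K_- & L_- \end{bmatrix}$ has full row rank $q$, the second conclusion follows from the first once we show $\xi^* K_- = 0$ and $\xi^* L_- = 0$, so really the target is $\xi = 0$.

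The key idea is to feed a single complex-exponential input into the system and use the dissipation inequality together with the factorization \eqref{eq:lure}. Concretely, consider the state trajectory and input of the form $x(t) = e^{s_0 t} x^0$, $u(t) = e^{s_0 t} u^0$ with $(s_0 I - A) x^0 = B u^0$ (possible for generic $u^0$ since $\Re s_0 > 0$ means $s_0$ is not an eigenvalue of $A$... more carefully, choose $x^0, u^0$ in the appropriate relation). Plugging such a trajectory into the differential dissipation inequality $\dot V \le \sigma(x,u)$ with $V(x) = \langle P_- x, x\rangle$ and using \eqref{eq:lure} gives, after cancelling $e^{2\Re s_0 \, t}$, an identity/inequality of the form $|K_- x^0 + L_- u^0|^2 \le 0$ modified by the term $(s_0 + \bar s_0)\langle P_- x^0, x^0\rangle = 2\Re s_0 \, \langle P_- x^0, x^0 \rangle$; the point is that along these modes the "$G$-term" in \eqref{eq:popovfactorization} must vanish in a way that, combined with $\Re s_0 > 0$, forces $K_- x^0 + L_- u^0 = 0$. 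Then one relates the left-kernel vector $(\xi, v)$ to this: the conditions $\xi^*(A - s_0 I) + v^* K_- = 0$, $\xi^* B + v^* L_- = 0$ say exactly that $\xi$ witnesses, via \eqref{eq:popovfactorization} evaluated at $\eta = \bar s_0$, a direction in which $G(s_0)$ has a nontrivial left null relation tied to the resolvent; the extremality of $P_-$ (as the \emph{minimal} solution, $P_- \le P \le P_+$ for all storage $P$) is what excludes this.

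I expect the main obstacle to be making the extremality argument precise without invoking the algebraic Riccati equation (which is unavailable since $R$ may be singular). The clean route, following \cite{megretski2010kyp}, is: if the left-kernel were nontrivial at some $s_0$ with $\Re s_0 > 0$, then one can perturb the factorization — subtract a rank-one Hermitian correction built from $\xi$ — to produce a \emph{strictly smaller} Hermitian solution $\tilde P < P_-$ of $\Lambda(\tilde P) \le 0$, contradicting minimality of $P_-$. Checking that $\Lambda(\tilde P) \le 0$ still holds (equivalently, that $\Pi$ still admits a spectral factorization after the perturbation) is the technical heart: one writes $\Lambda(P_-) = -\begin{bmatrix} K_-^* \\ L_-^* \end{bmatrix}\begin{bmatrix} K_- & L_- \end{bmatrix}$, computes $\Lambda(P_- ) - \Lambda(\tilde P)$ explicitly using \eqref{eq:popovpositive}, and verifies the difference is expressible as a negative-definite (or at least nonpositive, strict in the relevant direction) correction precisely because $\Re s_0 > 0$. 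I would also double-check the edge case where $s_0$ is an eigenvalue of $A$ in the open right half-plane (so the resolvent is singular), handling it by a limiting argument or by treating the generalized-eigenvector relation $\xi^*(A - s_0 I) = -v^* K_-$ directly; controllability of $(A,B)$ should prevent any genuinely degenerate configuration from surviving.
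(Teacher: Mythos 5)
Your proposal as written has a genuine gap, and one of its two central paragraphs is incorrect. The ``key idea'' of feeding the exponential mode $x(t)=e^{s_0t}x^0$, $u(t)=e^{s_0t}u^0$ into the dissipation relation proves nothing: for the factored solution the dissipation relation is an \emph{identity}, and after cancelling $e^{2\Re s_0 t}$ it reproduces exactly \eqref{eq:popovfactorization} at $(\eta,\zeta)=(\bar s_0,s_0)$; there is no inequality that ``forces $K_-x^0+L_-u^0=0$'', and generically this quantity is nonzero. The subsequent worry about $s_0$ being an eigenvalue of $A$ is an artifact of this resolvent-based setup. What remains is your second route --- perturb $P_-$ by a rank-one correction built from a putative left-kernel vector and contradict extremality --- but there you explicitly defer the decisive verification that $\Lambda(\tilde P)\leq 0$ (``the technical heart''), which is precisely the content of the lemma. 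So the submission is a plan, not a proof.

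That plan is, however, viable and genuinely different from the paper's argument, and it closes in a few lines. If $\xi^*(A-s_0I)+v^*K_-=0$ and $\xi^*B+v^*L_-=0$ with $\Re s_0>0$ and $\xi\neq 0$, set $W=\begin{bmatrix}K_- & L_-\end{bmatrix}$, $e=\begin{bmatrix}\xi \\ 0\end{bmatrix}$, $\tilde P=P_--\epsilon\,\xi\xi^*$; using $A^*\xi=\bar s_0\xi-K_-^*v$ and $B^*\xi=-L_-^*v$ one gets
\begin{equation*}
\Lambda(\tilde P)=-(W-\epsilon\, v e^*)^*(W-\epsilon\, ve^*)+\epsilon\left(\epsilon|v|^2-2\Re s_0\right)ee^*\leq 0
\end{equation*}
for $0<\epsilon\leq 2\Re s_0/|v|^2$ (any $\epsilon>0$ if $v=0$). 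Thus $\tilde P$ is a Hermitian solution of \eqref{eq:LMI} strictly below $P_-$, contradicting that $P_-$ is the least Hermitian solution (every solution of \eqref{eq:LMI} defines a quadratic storage function, hence is squeezed between the extremal ones by Lemma \ref{lemm:cyclodissipative}; note the labels in \eqref{eq:maximumsolution} versus Lemma \ref{lemm:cyclodissipative} should be checked --- all your argument needs is minimality of $P_-$). Then $\xi=0$, and $v=0$ follows from the full row rank of $\begin{bmatrix}K_- & L_-\end{bmatrix}$ via the kernel equations (not via ``$\xi^*K_-=0$'' as you wrote); the $P_+$ case is symmetric. By contrast, the paper never perturbs $P_\pm$: from a left-kernel vector it derives an integral identity for $e^{-\lambda t}\langle x,v\rangle$ along arbitrary trajectories steering $0$ to $x^0$, applies Cauchy--Schwarz with $\Re\lambda<0$ to obtain a $T$-independent bound $c|\langle x^0,v\rangle|^2\leq\int_0^T|K_+x+L_+u|^2\,dt$, and contradicts the infimum defining $\req$. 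Your algebraic route avoids trajectories entirely and is arguably cleaner, but until the completion-of-squares step above is actually carried out (and the exponential-mode paragraph removed), the proof is incomplete at its only nontrivial point.
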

\begin{proof}
	 We only consider $P_+$, with the $P_-$ case being similar. Suppose on the contrary that there exists $\lambda \in \CC$ such that $\Re \lambda < 0$ and 
	\begin{equation} \label{eq:lurekernel}
	\begin{bmatrix}
	v \\ z
	\end{bmatrix}^*\begin{bmatrix}A-\lambda I & B \\
	K_+& L_+
	\end{bmatrix} = 0, \quad \begin{bmatrix}
	v \\ z
	\end{bmatrix} \neq 0.
	\end{equation}
	Note that $v \neq 0$, otherwise $z = 0$ as well since $\begin{bmatrix}
	K_+ & L_+
	\end{bmatrix}$ has full row rank. Also $z \neq 0$ since $(A,B)$ is controllable. If $\dot x = Ax + Bu$, then from \eqref{eq:lurekernel} the function $e^{-\lambda t}\langle x,v\rangle$ satisfies
	\[
	e^{-\lambda T} \langle x(T),v \rangle -\langle x(0), v\rangle= -\int_{0}^T e^{-\lambda s} \, \langle K_+ x(s) + L_+u(s),z  \rangle \, ds.
	\]
	Suppose that $x(0) = 0$ and $x(T) = x^0$. Since $z \neq 0$ and $\Re \lambda < 0$, by Cauchy--Schwarz there exists $c > 0$ such that
	\[
	c|\langle x^0, v\rangle|^2 \leq \int_{0}^T |K_+ x(t) + L_+ u (t)|^2 \, dt.
	\]
	Crucially, this $c$ is independent of $T$ as well. From the dissipation inequality,
	\[
	\req(x_0)+ \int_{0}^T  |K_+ x(t) + L_+ u (t)|^2 \, dt = E(x,u,0,T).
	\]
	Now choose $x^0 \neq 0$ such that $\langle x_0, v\rangle  \neq 0$. Taking the infimum over all $T \geq 0$ and $(u,x)$ with $x(0) = 0$ and $x(T) = x^0$ contradicts the definition of $\req$.
\end{proof} 
We now show that \eqref{eq:rankattained} holds. For any rational function $G(s) = L + K(sI-A)^{-1}B$, there holds the identity
\begin{align*}
\nrank \begin{bmatrix}
A - sI & B \\
K & L
\end{bmatrix}  &= \nrank \begin{bmatrix}
I & 0 \\
K(sI - A)^{-1} & I
\end{bmatrix}   \begin{bmatrix}
A - sI & B \\
K & L
\end{bmatrix}  \\ 
&= \nrank \begin{bmatrix}
A - sI & B \\
0 & G(s)
\end{bmatrix} = 
m + \nrank G(s).
\end{align*}
In the setting of Lemma \ref{lemm:lure}, the normal rank of the left-hand side is $m+q$, which indeed implies \eqref{eq:rankattained}. If $\Pi(-s,s)$ has full normal rank, then $L$ is necessarily a square matrix.

To conclude this section, consider the behavior of \eqref{eq:LMI} and the Popov function under the transformations
\begin{equation} \label{eq:statefeedback}
x_\sim = T^{-1}x, \quad u_\sim = u - Fx,
\end{equation}
where $T \in \KK^{m \times m}$ is invertible and $F \in \KK^{n\times m}$. Given $(A,B,M, P)$, let
\begin{equation} \label{eq:feedbackcontrol}
\begin{aligned}
&A_\sim =  T^{-1}(A + BF)T, \\ &B_\sim = T^{-1}B, \\
 &Q_\sim = T^*(Q + 2\Re S^*F + F^* RF)T, \\ &R_\sim = R, \\ &S_\sim = (S + RF)T, \\
 &P_\sim = T^*PT,
\end{aligned}
\end{equation}
where $M_\sim$ is defined as in \eqref{eq:Mdef} by replacing $(Q,R,S)$ with $(Q_\sim, R_\sim, S_\sim)$.
If we define the affine matrix function $\Lambda_\sim$  as in \eqref{eq:LMI} by replacing $(A,B,M)$ with $(A_\sim, B_\sim, M_\sim)$, then
\[
\Lambda_\sim(P_\sim) = \begin{bmatrix}
T & 0 \\
F & I
\end{bmatrix}^* \Lambda(P) \begin{bmatrix}
T & 0 \\
F& I
\end{bmatrix},
\]
and $E_\sim(x_\sim, u_\sim, 0, t) = E(x,u,0,t)$ when the left-hand side is defined in the obvious way. In particular, dissipativity of $(A,B,M)$ is preserved under changes of variable of the form \eqref{eq:statefeedback}. Furthermore, if $\Pi_\sim(\eta,\zeta)$ is the transformed Popov function, then
\[
 \Pi(\eta,\zeta) = W(\bar \eta)^* \,  \Pi_\sim(\eta,\zeta)\,W(\zeta),
\]
where  
\[
W(s) = I - F(sI-A)^{-1}B.
\]
 In particular, $\Pi(\bar s, s) \geq 0$ if and only if $\Pi_\sim(\bar s, s) \geq 0$, apart from finitely many $s$. It is also clear that if $(A,B)$ is controllable, then so is $(A_\sim, B_\sim)$.

\subsection{Nonnegative storage functions} \label{subsect:semidefinite}

In general it is difficult to characterize the existence of positive semidefinite solutions to $\Lambda(P) \leq 0$. The following notion of minimal stability is due to Yakubovich and Popov.

\begin{defi} \label{defi:minimal}
$(A,B,M)$ is \emph{minimally stable} if for each $x^0 \in \KK^m$ there exists $(x,u)$ with $\dot x = Ax + Bu$ such that
\[
 x(0)=x^0, \quad E(x,u,0,t) \leq 0 \text{ for all } t\geq 0, \quad x(t) \rightarrow 0 \text{ as }t \rightarrow \infty.
\]
\end{defi}
If $(A,B,M)$ is minimally stable, then every continuous storage function normalized by $V(0) = 0$ is nonnegative: for each $x^0 \in \KK^m$, minimal stability and the dissipation inequality imply that
\[
V(x(t)) \leq V(x^0),
\]
at which point it suffices to let $t \rightarrow \infty$ to deduce that $V(x^0) \geq 0$.  In particular, every Hermitian solution of $\Lambda(P) \leq 0$ is positive semidefinite. Moreover, it is easy to see that minimal stability is invariant under the transformations considered in \eqref{eq:feedbackcontrol}. In the notation there, $(A,B,M)$ is minimally stable if and only if $(A_\sim, B_\sim, M_\sim)$ is.

 If $A$ is Hurwitz and $ Q \leq 0$, then $(A,B,M)$ is minimally stable: simply take $u = 0$ and let $x$ solve the asymptotically stable system $\dot x = Ax$ with $x(0) = x^0$. Equivalently, this can be seen from the Lyapunov inequality
\[
2\Re A^*P \leq Q
\]
implied by the upper left block of \eqref{eq:LMI}. More generally, invariance under state feedback implies the following.
\begin{lemm} \label{lemm:minimal}
	If there exists $F \in \KK^{n \times m}$ such that $A + BF$ is Hurwitz and 
	\begin{equation} \label{eq:minimalnegative}
	Q+ 2\Re S^*F + F^* RF \leq 0,
	\end{equation}
	then $(A,B)$ is minimally stable.
\end{lemm}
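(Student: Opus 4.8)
The plan is to deduce the lemma directly from two facts already established in this subsection: first, the elementary observation that $(A,B,M)$ is minimally stable whenever $A$ is Hurwitz and $Q \leq 0$ (take $u = 0$ and let $x$ solve $\dot x = Ax$, so that $E(x,0,0,t) = \int_0^t \langle Qx(s),x(s)\rangle\,ds \leq 0$ for all $t \geq 0$ while $x(t) \to 0$); and second, the invariance of minimal stability under the feedback transformations \eqref{eq:statefeedback}, i.e. $(A,B,M)$ is minimally stable if and only if $(A_\sim, B_\sim, M_\sim)$ is.

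Concretely, I would apply \eqref{eq:statefeedback}--\eqref{eq:feedbackcontrol} with $T = I$ and with the matrix $F$ supplied by the hypothesis. Reading off \eqref{eq:feedbackcontrol}, this gives $A_\sim = A + BF$, $B_\sim = B$, $R_\sim = R$, $S_\sim = S + RF$, and
\[
Q_\sim = Q + 2\Re S^*F + F^*RF.
\]
By hypothesis $A_\sim = A + BF$ is Hurwitz, and by \eqref{eq:minimalnegative} we have $Q_\sim \leq 0$. Hence the special case quoted above applies to the triple $(A_\sim, B_\sim, M_\sim)$, showing it is minimally stable.

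Finally, invoking the invariance of minimal stability under \eqref{eq:feedbackcontrol}, minimal stability of $(A_\sim, B_\sim, M_\sim)$ implies minimal stability of $(A,B,M)$, which is the claim. There is no real obstacle here: the only point that needs a line of justification is the special case, and even that is immediate from the definition of minimal stability together with the fact that a Hurwitz $A$ drives every initial state to the origin while a negative semidefinite $Q$ makes the supplied energy nonpositive along the free trajectory $u = 0$. The bookkeeping with $T = I$ in \eqref{eq:feedbackcontrol} is routine substitution.
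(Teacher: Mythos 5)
Your proof is correct and is precisely the paper's argument: the paper's proof consists of the single line ``Apply \eqref{eq:feedbackcontrol}'', relying on exactly the two facts you cite --- the $u=0$ special case when $A$ is Hurwitz and $Q \leq 0$, and the invariance of minimal stability under the feedback transformation, which turns the hypothesis into that special case with $A_\sim = A+BF$ and $Q_\sim = Q + 2\Re S^*F + F^*RF \leq 0$. Nothing further is needed.
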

\begin{proof}
	Apply \eqref{eq:feedbackcontrol}.
\end{proof}

If $Q \leq 0$, then \eqref{eq:minimalnegative} holds for $F = -\delta S$ provided $\delta \in [0,2/\|R\|]$. Thus, one way to verify the hypotheses of Lemma \ref{lemm:minimal} is to  show that $A -\delta BS$ is Hurwitz for $\delta$ in this range.

Another path towards positivity involves strengthened frequency domain conditions. From \eqref{eq:popovpositive}, if $\Lambda(P) \leq 0$ admits a solution $P \geq 0$, then 
\begin{equation} \label{eq:popovnegative} \tag{FDI$_+$}
\Pi(\bar s, s) \geq 0 \text{ whenever } \Re(s) \geq 0 \text{ and } \det(sI-A)\neq 0.
\end{equation}
This is of course a stronger condition than \eqref{eq:KYPFDI} in general.
It was claimed in \cite{willems1971least} that the converse also holds, but Willems later provided a counterexample \cite{willems1974existence}. In the latter reference it was pointed out that sufficiency does hold if there is a decomposition
\begin{equation} \label{eq:differenceofsquares}
\sigma(x,u) = |C_1 x + D_1 u|^2 -|C_2 x + D_2 u |^2,
\end{equation}
where $D_1$ is square; for a complete proof, see \cite{reis2015kalman}. The following closely related condition was essentially considered by Moylan \cite{moylan1975frequency} (although some details are missing in the latter reference).

\begin{lemm} \label{lemm:moylan}
	If $(A,B)$ is controllable and there exists $F \in \KK^{n\times m}$ such that
	\begin{equation} \label{eq:moylan}
	Q+ 2\Re S^*F + F^*RF \leq 0,
	\end{equation}
	then $\Lambda(P) \leq 0$ admits a solution $P \geq 0$ if and only if \eqref{eq:popovnegative} holds.
\end{lemm}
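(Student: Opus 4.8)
The plan is to reduce to the case where $Q \le 0$ by the state-feedback transformation \eqref{eq:feedbackcontrol}, and then to establish sufficiency of \eqref{eq:popovnegative} by combining the spectral factorization machinery of \eqref{eq:popovfactorization}--\eqref{eq:spectralfactorization} with the stability properties of the extremal solution $P_+$ from Lemma \ref{lemm:lure}. First I would apply \eqref{eq:statefeedback} with $T = I$ and the given $F$, so that $(A,B,M)$ becomes $(A_\sim, B_\sim, M_\sim)$ with $Q_\sim = Q + 2\Re S^*F + F^*RF \le 0$ by \eqref{eq:moylan}. All hypotheses and conclusions are preserved: controllability of $(A_\sim,B_\sim)$ holds, $\Lambda_\sim(P_\sim) \le 0$ is solvable (resp. with $P_\sim \ge 0$) iff $\Lambda(P)\le 0$ is, because $P_\sim = T^*PT = P$ and the congruence $\begin{bmatrix} I & 0 \\ F & I\end{bmatrix}$ is invertible, and $\Pi_\sim(\bar s, s) \ge 0$ iff $\Pi(\bar s, s) \ge 0$ off a finite set since $W(s) = I - F(sI-A)^{-1}B$ is invertible for all but finitely many $s$. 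Hence it suffices to prove the lemma assuming $Q \le 0$.

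Now assume $Q \le 0$ and that \eqref{eq:popovnegative} holds. The implication $(\Leftarrow)$: since \eqref{eq:popovnegative} implies \eqref{eq:KYPFDI}, Lemma \ref{lemm:KYP} gives a Hermitian solution $P$ of $\Lambda(P) \le 0$, and so the extremal solutions $P_-\le P_+$ exist with $\avail = \langle P_+ x, x\rangle$, $\req = \langle P_- x, x\rangle$. I claim $P_+ \ge 0$. Take the factorization \eqref{eq:lure} for $P_+$, yielding $(K_+, L_+)$ and the transfer matrix $G(s) = L_+ + K_+(sI-A)^{-1}B$ with the spectral factorization $\Pi(-\bar s, s) = G(\bar s)^*G(s)$, i.e. $\Pi(\bar s, s) = G(-s)^*G(s)$ after relabeling. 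By Lemma \ref{lemm:lure}, $\begin{bmatrix} A - sI & B \\ K_+ & L_+\end{bmatrix}^*$ has trivial kernel for $\Re s < 0$; via the rank identity displayed after Lemma \ref{lemm:lure} this says $G(s)$ has no zeros in the open left half-plane, while $G$ has all its poles among the eigenvalues of $A$. The key step is to extract positivity of $P_+$ from these facts together with \eqref{eq:popovnegative}. Using \eqref{eq:popovpositive} with $\eta = \bar s$, $\zeta = s$ and $P = P_+$:
\[
2\Re(s)\, B^*(sI - A^*)^{-1}P_+(sI-A)^{-1}B = \Pi(\bar s, s) - G(-s)^*G(s).
\]
For $\Re s > 0$ the left side is $2\Re(s)$ times $\langle P_+ \xi, \xi\rangle$ with $\xi = (sI-A)^{-1}B w$; I would like to conclude $\langle P_+ \xi, \xi\rangle \ge 0$ for a dense set of such $\xi$, hence for all $\xi$ by controllability (which makes $s \mapsto (sI-A)^{-1}B$ have full range as $s,w$ vary). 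The obstruction is the term $G(-s)^*G(s)$, which need not be sign-definite; here is where $Q \le 0$ and the analyticity/zero structure of $G$ enter. When $Q\le 0$, on the imaginary axis $\Pi(-i\omega,i\omega) = \langle Q\,\zeta,\zeta\rangle + \cdots$ — more usefully, I would argue that $G$ extends analytically and boundedly into $\Re s < 0$ with $\det G(s)\ne 0$ there, so that $G(s)^{-1}$ is analytic and bounded on a neighborhood of the closed left half-plane; a $H^\infty$/Poisson-integral or maximum-modulus argument applied to $s \mapsto B^*(sI-A^*)^{-1}P_+(sI-A)^{-1}B$ then propagates nonnegativity from $\Re s = 0$ (where it follows from \eqref{eq:popovnegative} and the factorization, since $\Re(s) = 0$ kills the left side but the right side must then be $\ge 0$ as a limit) into $\Re s > 0$, forcing $P_+ \ge 0$.

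The converse $(\Rightarrow)$ is immediate: if $\Lambda(P)\le 0$ has a solution $P \ge 0$, then \eqref{eq:popovpositive} and $\Lambda(P)\le 0$ give, for $\Re s \ge 0$ with $\det(sI-A)\ne 0$,
\[
\Pi(\bar s, s) \ge (s + \bar s)\,B^*(sI-A^*)^{-1}P(sI-A)^{-1}B = 2\Re(s)\,\langle P\,\xi,\xi\rangle \ge 0,
\]
which is \eqref{eq:popovnegative}. I expect the main obstacle to be the forward direction's positivity propagation: making rigorous the passage from nonnegativity of the Popov function on $\overline{\{\Re s \ge 0\}}$ to nonnegativity of $P_+$, i.e. justifying that the extremal storage function is in fact the available storage and is nonnegative. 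The hypothesis \eqref{eq:moylan} together with $Q\le 0$ (after the reduction) is exactly what guarantees, via Lemma \ref{lemm:minimal}, that $(A,B,M)$ is minimally stable — so in fact the cleanest route is: reduce to $Q\le 0$, invoke minimal stability from Lemma \ref{lemm:minimal} (with $F$ there equal to $0$ after the feedback transformation, using $Q_\sim \le 0$ and $A_\sim + B_\sim\cdot 0$... but $A_\sim$ need not be Hurwitz), and then cite that every Hermitian solution of $\Lambda_\sim(P)\le 0$ is positive semidefinite. The subtlety is that $A_\sim = A + BF$ need not be Hurwitz, so minimal stability does not come for free from Lemma \ref{lemm:minimal}; I would instead need the refined argument above, or a direct construction of admissible trajectories witnessing minimal stability using the zero structure of $G$. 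That reconciliation — showing \eqref{eq:popovnegative} plus controllability plus $Q\le 0$ yields a nonnegative Hermitian solution even without $A$ Hurwitz — is the crux.
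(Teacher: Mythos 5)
Your reduction to $Q \le 0$ via \eqref{eq:feedbackcontrol} and your proof of the easy direction are fine, but the forward direction has a genuine gap which you flag and do not close. The proposed mechanism --- extracting $P_+\ge 0$ from $\Pi(\bar s,s) = 2\Re(s)\,B^*(\bar sI-A^*)^{-1}P_+(sI-A)^{-1}B + G(s)^*G(s)$ by a maximum-modulus/Poisson propagation from the imaginary axis into $\Re s>0$ --- does not work as stated. For $\Re s>0$ the quantity you want to be nonnegative equals $\Pi(\bar s,s) - G(s)^*G(s)$, and the subtracted term is positive semidefinite, so \eqref{eq:popovnegative} by itself gives no sign information; moreover Lemma \ref{lemm:lure} only says the extremal factor has no \emph{zeros} in the open left half-plane, not that $G$ or $G^{-1}$ is analytic and bounded there ($G$ inherits every left-half-plane eigenvalue of $A$ as a pole, so your ``$G^{-1}$ analytic and bounded on a neighborhood of the closed left half-plane'' premise fails). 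More fundamentally, an argument that uses \eqref{eq:popovnegative} and controllability but only decoratively uses \eqref{eq:moylan} cannot succeed: Willems' counterexample, cited just before the lemma, shows that \eqref{eq:popovnegative} alone does not yield a positive semidefinite solution, so the hypothesis must enter structurally rather than through the observation that minimal stability \emph{would} follow if $A+BF$ happened to be Hurwitz.

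The paper closes the gap by a different, concrete route. After the same reduction to $Q\le 0$, it first assumes $R>0$ and applies the additional feedback $F=-R^{-1}S$ to remove the cross term, so that $\Pi(\bar s,s) = R - H(s)^*H(s)$ with $H(s)=C(sI-A)^{-1}B$ and $-C^*C = Q-SR^{-1}S$. Then \eqref{eq:popovnegative} forces $H$ to be bounded, hence pole-free, on the closed right half-plane; in the detectability decomposition \eqref{eq:kalman} this makes the block $A_{11}$ Hurwitz. A Hermitian solution $P$ exists by Lemma \ref{lemm:KYP}, its $(1,1)$ block satisfies the Lyapunov inequality $2\Re A_{11}^*P_{11}\le -C_1^*C_1$ and is therefore positive semidefinite, and $P_0=\diag(P_{11},0)$ is checked to satisfy $\Lambda(P_0)\le 0$. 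Singular $R$ is handled by replacing $R$ with $R+\varepsilon I$, applying the above, and letting $\varepsilon\to 0$ via Lemma \ref{lemm:dissipative}. Some argument of this kind --- using \eqref{eq:moylan} to produce an output structure whose poles are controlled by the strengthened frequency condition --- is what your outline is missing; the analytic propagation idea cannot substitute for it.
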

\begin{proof}
We show that \eqref{eq:popovnegative} implies the existence of  a solution $P \geq 0$ to $\Lambda(P) \leq 0$. From \eqref{eq:moylan} and \eqref{eq:feedbackcontrol}, we can assume that $Q \leq 0$. First, suppose that $R > 0$. With the feedback $F = -R^{-1}S$, we can furthermore assume our system $(A,B,M)$ satisfies 
\[
M = \begin{bmatrix}
Q - SR^{-1}S & 0 \\
0 & R
\end{bmatrix}.
\]
If we factor $Q - SR^{-1}S = -C^*C$ for some $C$, then $\Pi(\bar s, s) = -H(s)^*H(s) + R$, where $H(s) = C(sI-A)^{-1}B$. If $C = 0$, then $P = 0$ solves $\Lambda(P) \leq 0$. Otherwise, in an appropriate basis, we can assume that $A$ has a detectability decomposition
\begin{equation} \label{eq:kalman}
A = 
\begin{bmatrix}
A_{11} & 0 \\
A_{21} & A_{22}
\end{bmatrix}, \quad B = \begin{bmatrix}
B_{1} \\ B_{2}
\end{bmatrix}, \quad C = \begin{bmatrix}
C_{1} & 0
\end{bmatrix},
\end{equation}
where $(A_{11},C_1)$ is detectable and $(A_{11},B_1)$ is controllable. Now \eqref{eq:popovnegative} implies that $H(s)$ has no poles in the closed right half-plane; since the poles of $H(s)$ and the eigenvalues of $A_{11}$ coincide in the closed right half-plane, we conclude that $A_{11}$ is Hurwitz. Since \eqref{eq:KYPFDI} holds, there is a Hermitian solution 
\[
P = \begin{bmatrix}
P_{11} & P_{12} \\
P_{21} & P_{22}
\end{bmatrix}
\]
to $\Lambda(P) \leq 0$, where the block decomposition is the same as in \eqref{eq:kalman}.  Thus $P_{11} \geq 0$, since it satisfies the Lyapunov inequality
\[
 2\Re A_{11}^*P_{11}  \leq -C_1^*C_1.
\]
 It is easy to check that if
 \[
 P_0 = \begin{bmatrix}
 P_{11} & 0 \\
 0 & 0\end{bmatrix},
 \] 
 then $P_0 \geq 0$ and $\Lambda(P_0) \leq 0$ as well.
 
 Even if $R$ is not invertible, the argument above applies if $R$ is replaced by $R+\varepsilon I$ for any $\varepsilon > 0$. This implies the dissipation inequality
 \[
 \int_{0}^T \sigma(x(s),u(s)) + \varepsilon|u(s)|^2 \, ds  \geq 0
 \]
 for every $T \geq 0$ whenever $\dot x= Ax + Bu$ and $x(0) = 0$. Let $\varepsilon \rightarrow 0$ and apply Lemma \ref{lemm:dissipative} to finish the proof.
\end{proof}

In general, the existence of a decomposition \eqref{eq:differenceofsquares} is distinct from the hypotheses of Lemma \ref{lemm:moylan}. However, when $R> 0$, they coincide; this is easily deduced from the fact that \eqref{eq:differenceofsquares} implies $M$ has at most $n$ positive eigenvalues.

The relationship is between Lemmas \ref{lemm:minimal} and \ref{lemm:moylan} may not be clear at first glance. To complete our discussion, we show that they are equivalent when $(A,S)$ is detectable. For simplicity we assume that $Q \leq 0$.

\begin{lemm}
	Suppose that $(A,B)$ is controllable, $(A,S)$ is detectable, and $Q \leq 0$. If $\eqref{eq:KYPFDI}$ holds, then the following are equivalent.
	\begin{enumerate} \itemsep6pt
		\item  $(A,B)$ is minimally stable, \label{it:hurwitz1}
		\item Every Hermitian solution of $\Lambda(P) \leq 0$ satisfies $P \geq 0$, \label{it:hurwitz2}
		\item \eqref{eq:popovnegative} holds, \label{it:hurwitz3} 
		\item There exists $\delta \in (0,2/\| R\|)$ such that $A - \delta BS$ is Hurwitz. \label{it:hurwitz4}
	\end{enumerate}
\end{lemm}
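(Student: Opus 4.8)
The plan is to close the cycle $\eqref{it:hurwitz1}\Rightarrow\eqref{it:hurwitz2}\Rightarrow\eqref{it:hurwitz3}\Rightarrow\eqref{it:hurwitz4}\Rightarrow\eqref{it:hurwitz1}$, with essentially all of the work concentrated in $\eqref{it:hurwitz3}\Rightarrow\eqref{it:hurwitz4}$. For $\eqref{it:hurwitz1}\Rightarrow\eqref{it:hurwitz2}$ I would invoke only the observation recorded after Definition \ref{defi:minimal}: the quadratic form $\langle Px,x\rangle$ is a storage function vanishing at the origin, so minimal stability forces it to be nonnegative. For $\eqref{it:hurwitz2}\Rightarrow\eqref{it:hurwitz3}$: since $(A,B)$ is controllable and \eqref{eq:KYPFDI} holds, Lemma \ref{lemm:KYP} supplies \emph{some} Hermitian solution $P$ of $\Lambda(P)\le 0$; by $\eqref{it:hurwitz2}$ this $P$ is positive semidefinite, and then \eqref{eq:popovpositive} applied to $\Lambda(P)\le 0$ yields \eqref{eq:popovnegative}. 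Finally $\eqref{it:hurwitz4}\Rightarrow\eqref{it:hurwitz1}$ is Lemma \ref{lemm:minimal} with $F=-\delta S$: the Hurwitz requirement there is exactly that $A+BF=A-\delta BS$ be Hurwitz, while $Q+2\Re S^*F+F^*RF = Q+\delta S^*(\delta R-2I)S\le Q\le 0$ because $0<\delta<2/\|R\|$ makes $\delta R-2I$ negative definite.

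It remains to prove $\eqref{it:hurwitz3}\Rightarrow\eqref{it:hurwitz4}$. First apply Lemma \ref{lemm:moylan} with $F=0$ (legitimate since $Q\le 0$): by \eqref{eq:popovnegative} there is a solution $P\ge 0$ of $\Lambda(P)\le 0$, and in particular $R\ge 0$ since $-R$ is the lower-right block of $\Lambda(P)$. Fix any $\delta\in(0,2/\|R\|)$ and pass to the feedback-transformed system \eqref{eq:feedbackcontrol} with $T=I$ and $F=-\delta S$, writing $A_\sim=A-\delta BS$ and $Q_\sim=Q+\delta S^*(\delta R-2I)S\le 0$, so that $\Lambda_\sim(P)\le 0$ with the same $P\ge 0$. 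I claim $A_\sim$ is Hurwitz, which is $\eqref{it:hurwitz4}$. Suppose not, say $A_\sim v=\lambda v$ with $v\neq 0$ and $\Re\lambda\ge 0$. The $(1,1)$-block of $\Lambda_\sim(P)\le 0$ gives $A_\sim^*P+PA_\sim\le Q_\sim$, whence
\[
0\le 2(\Re\lambda)\langle Pv,v\rangle=\langle(A_\sim^*P+PA_\sim)v,v\rangle\le\langle Q_\sim v,v\rangle\le 0 ,
\]
so $\langle Q_\sim v,v\rangle=0$. Since $Q\le 0$ and $\delta S^*(\delta R-2I)S\le 0$, both summands must vanish on $v$; as $\delta R-2I$ is negative definite, this forces $Sv=0$, hence $A_\sim v=Av$ and therefore $Av=\lambda v$ with $v\in\ker S$ and $\Re\lambda\ge 0$ --- contradicting detectability of $(A,S)$. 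Thus $A-\delta BS$ is Hurwitz, in fact for every $\delta$ in that range.

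The only genuinely delicate point is this last implication, and within it the passage from $\langle Q_\sim v,v\rangle=0$ to $Sv=0$: the displayed chain collapses to equalities precisely because $P\ge 0$ and $\Re\lambda\ge 0$, and the resulting equality can be upgraded to $Sv=0$ only because the added term $\delta S^*(\delta R-2I)S$ is strictly definite along $\range S$ for $0<\delta<2/\|R\|$. This is exactly where the detectability hypothesis on $(A,S)$ and the restriction on $\delta$ enter; everything else is formal bookkeeping with the earlier lemmas.
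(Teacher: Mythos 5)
Your proposal is correct, and the easy legs coincide with the paper's: the paper also dispatches $\eqref{it:hurwitz1}\Rightarrow\eqref{it:hurwitz2}\Rightarrow\eqref{it:hurwitz3}$ and $\eqref{it:hurwitz4}\Rightarrow\eqref{it:hurwitz1}$ exactly as you do (nonnegativity of storage functions under minimal stability, Lemma \ref{lemm:KYP} plus \eqref{eq:popovpositive}, and Lemma \ref{lemm:minimal} with $F=-\delta S$). Where you genuinely diverge is the key implication $\eqref{it:hurwitz3}\Rightarrow\eqref{it:hurwitz4}$. The paper stays entirely in the frequency domain: with $G(s)=S(sI-A)^{-1}B$ and $G_\delta(s)=(I+\delta G(s))^{-1}G(s)$, it notes that detectability of $(A-\delta BS,S)$ and controllability of $(A-\delta BS,B)$ force every closed-right-half-plane eigenvalue of $A-\delta BS$ to be a pole of $G_\delta$, and then rules out such poles because $Q\leq 0$ and \eqref{eq:popovnegative} give $2\Re G(s)+R\geq \Pi(\bar s,s)\geq 0$, so $\det(I+\delta G(s))\neq 0$ for $\delta\in(0,2/\|R\|)$; no storage function is constructed. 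You instead route through Lemma \ref{lemm:moylan} (with $F=0$, legitimate since $Q\leq 0$) to obtain a solution $P\geq 0$ of $\Lambda(P)\leq 0$, transfer it by \eqref{eq:feedbackcontrol} to $A_\sim=A-\delta BS$ with $Q_\sim=Q+\delta S^*(\delta R-2I)S\leq 0$, and run a PBH-style eigenvector argument on the $(1,1)$ block: the chain $0\leq 2(\Re\lambda)\langle Pv,v\rangle\leq\langle Q_\sim v,v\rangle\leq 0$ forces $Sv=0$ (since $\delta R-2I<0$ on that range of $\delta$, with no need for your side remark that $R\geq 0$), hence $Av=\lambda v$ with $v$ unobservable, contradicting detectability of $(A,S)$. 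Both arguments are sound and use the same hypotheses; the paper's is shorter and self-contained at this point (it never needs the semidefinite solution), while yours leans on the heavier Lemma \ref{lemm:moylan} but has the virtue of exhibiting the Lyapunov mechanism explicitly and of being a purely time-domain/LMI verification. Your closing claim that the Hurwitz property holds for every $\delta\in(0,2/\|R\|)$ matches the paper's conclusion.
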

\begin{proof}
Clearly $\eqref{it:hurwitz1} \Rightarrow \eqref{it:hurwitz2} \Rightarrow \eqref{it:hurwitz3}$ and $\eqref{it:hurwitz4} \Rightarrow \eqref{it:hurwitz1}$, so it remains to show that $\eqref{it:hurwitz3} \Rightarrow \eqref{it:hurwitz4}$. If $G(s) = S(sI-A)^{-1}B$ and $G_\delta(s) = S(sI - A  + \delta BS)^{-1}B$, then
\[
G_\delta(s) = (I + \delta G(s))^{-1} G(s).
\]
Since $(A-\delta BS,B)$ is controllable and $(A-\delta BS,S)$ is detectable, every eigenvalue of $A - \delta BS$ in the closed right half-plane is a pole of $G_\delta(s)$. If $\delta > 0$, then $G_\delta(s)$ has a pole at $s \in \CC$ if and only if $s$ is not a pole of $G(s)$ and $\det (I + \delta G(s)) = 0$. But if $\Re s \geq 0$ and $s \in \CC$ is not a pole of $G(s)$, then
\[
2 \Re G(s) + R \geq \Pi(s) \geq 0
\]
since $Q \leq 0$. Thus $\det(I+\delta G(s)) \neq 0$ provided $\delta \in (0, 2/\| R \|)$,  and hence $A-\delta BS$ is Hurwitz (cf. \cite[Lemma 1]{willems1972dissipativeII}).
\end{proof}

 There exist other frequency conditions under which $\Lambda(P) \leq 0$ admits positive semidefinite solutions \cite{molinari1975conditions,trentelman2001pick,willems1998quadratic,popov1973hyperstability}, but these are not discussed here.

\subsection{Hyperstability}
Next, we turn our attention to hyperstability, which is a notion of robust stability for LTI systems due to Popov \cite{popov1973hyperstability}.
\begin{defi} $(A,B,M)$ is \emph{hyperstable} if there exists $c>0$ such that for all $t \geq 0$,
	\begin{equation} \label{eq:hyperstabilityineq}
	|x(t)| \leq c (|x(0)| + \beta) 
	\end{equation}
 whenever $\dot x = Ax + Bu$ and $E(x,u,0,t) \leq \beta^2$ for all $t \geq 0$.
\end{defi}
Hyperstability is closely related to Safonov's graph separation property and the modern theory of integral quadratic constraints \cite{megretski1997system,safonov1979stability,carrasco2018conditions,scherer2018stability}, but these connections are not pursued here.

\begin{lemm} \label{lemm:hyperstabilityimpliesFDI}
	If $(A,B,M)$ is hyperstable, then \eqref{eq:popovnegative} holds.
\end{lemm}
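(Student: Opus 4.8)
The plan is to prove the contrapositive: if \eqref{eq:popovnegative} fails, then $(A,B,M)$ is not hyperstable. The mechanism is to drive the system by a purely exponential input at a complex frequency $s_0$ in the right half-plane where the Popov function is negative in some direction; this produces a trajectory whose state grows like $e^{\Re(s_0)t}$ while the supplied energy $E(x,u,0,t)$ stays nonpositive for all $t\ge 0$, which is incompatible with the defining inequality \eqref{eq:hyperstabilityineq}.

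First I would reduce to a failure point strictly inside $\{\Re s>0\}$. The map $s\mapsto\Pi(\bar s,s)$ is continuous on the complement of the spectrum of $A$, so its eigenvalues vary continuously there; hence if \eqref{eq:popovnegative} fails at some $s_0$ with $\Re s_0\ge 0$ and $\det(s_0I-A)\neq 0$, then $\Pi(\bar s,s)$ has a negative eigenvalue for all $s$ in a neighbourhood of $s_0$, in particular for some $s$ with $\Re s>0$. Thus we may assume $\Re s_0>0$, $\det(s_0I-A)\neq 0$, and that there is $v_0$ with $v_0^*\Pi(\bar s_0,s_0)v_0<0$.

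Now the construction. For any $v\in\KK^n$ put $x_0=(s_0I-A)^{-1}Bv$; since $(s_0I-A)x_0=Bv$, the pair $u(t)=e^{s_0t}v$, $x(t)=e^{s_0t}x_0$ solves $\dot x=Ax+Bu$, and because $\sigma$ is a Hermitian form one computes directly that $\sigma(x(t),u(t))=e^{2\Re(s_0)t}\,v^*\Pi(\bar s_0,s_0)v$. I would choose $v$ with both $v^*\Pi(\bar s_0,s_0)v<0$ and $Bv\neq 0$: take $v=v_0$ if $Bv_0\neq 0$, and otherwise $v=v_0+Nw$ with $w$ a fixed vector satisfying $Bw\neq 0$ and $N$ large, so the quadratic form stays negative. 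Then $x_0\neq 0$ and
\[
E(x,u,0,t)=\frac{v^*\Pi(\bar s_0,s_0)v}{2\Re(s_0)}\bigl(e^{2\Re(s_0)t}-1\bigr)\le 0\qquad(t\ge 0),
\]
so hyperstability applied with $\beta=0$ would give $|x(t)|\le c\,|x(0)|$ for all $t$; but $|x(t)|=e^{\Re(s_0)t}|x_0|\to\infty$, a contradiction. When $\KK=\RR$ one works instead with the real input $t\mapsto\Re(e^{s_0t}v)$; the only change is that $\sigma(x(t),u(t))$ acquires bounded oscillatory terms, and a short computation shows these affect neither $\sup_t E(x,u,0,t)<\infty$ nor the growth of $|x(t)|$.

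The step I expect to be the main obstacle is precisely ensuring that $E(x,u,0,t)$ stays bounded above for \emph{every} $t\ge 0$ — so that \eqref{eq:hyperstabilityineq} can be invoked with a finite $\beta$ — while simultaneously forcing $|x(t)|$ to blow up. Over $\KK=\CC$ the Hermitian structure of $\sigma$ makes a pure exponential input give $\sigma(x(t),u(t))$ a constant negative multiple of $e^{2\Re(s_0)t}$, so this is automatic; it is over $\KK=\RR$, and in the degenerate case $Bv_0=0$ (which also forces $v_0^*Rv_0<0$), that the perturbations above are needed — the latter using that $\ker B\neq\KK^n$. A minor point, already handled in the first step, is the passage from the open right half-plane to its closure by continuity.
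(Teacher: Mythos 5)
Your core argument is exactly the paper's: at any $s$ with $\Re s>0$ that is not an eigenvalue of $A$ and at which $\Pi(\bar s,s)$ has a negative direction, feed in the exponential input $u(t)=e^{st}v$, note that the supplied energy is nonpositive for all $t\geq 0$ while $|x(t)|\rightarrow\infty$, contradicting \eqref{eq:hyperstabilityineq}; the boundary $\Re s=0$ is then recovered by continuity. Your contrapositive/neighbourhood phrasing is only cosmetic, so on the main line you and the paper coincide.

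Two of your added refinements --- both addressing points the paper's own proof silently skips --- contain slips. First, in the degenerate case $Bv_0=0$ you perturb to $v=v_0+Nw$ with $Bw\neq 0$ and ``$N$ large''; for large $N$ one has $v^*\Pi(\bar s_0,s_0)v\approx N^2\,w^*\Pi(\bar s_0,s_0)w$, which need not be negative. Take $N$ small instead: negativity persists by continuity, and $Bv=NBw\neq 0$ for every $N\neq 0$, so the fix is one word. Second, for $\KK=\RR$ with $u(t)=\Re\bigl(e^{s_0t}v\bigr)$ and $\xi=(x_0,v)$, the correction to $\sigma(x(t),u(t))$ is $\tfrac12\Re\bigl(e^{2s_0t}\,\xi^T M\xi\bigr)$, whose envelope is $e^{2\Re(s_0)t}$: it is not a bounded oscillatory term, and after integration it is of the same order as the negative leading term, so $\sup_{t\geq 0}E(x,u,0,t)<\infty$ is not automatic. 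That case is immediate only when the negative direction occurs at a real $s_0$ (take $v$ real, no oscillation); otherwise it needs a genuine argument, not the one you sketch. To be fair, the paper's proof uses the complex exponential input without comment, so this last gap is shared with the paper; but your stated justification for the real case is incorrect as written.
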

\begin{proof}
 Suppose that $s \in \CC$ is not an eigenvalue of $A$ and that $\Pi(\bar s, s)$ is not positive-semidefinite. Then there exists a nonzero $u^0 \in \KK^n$ such that 
	\[
	\langle \Pi(\bar s, s)u^0, u^0\rangle < 0.
	\]
	 If $u(t) = e^{s t}u^0$ and $x(t) = (sI-A)^{-1}u(t)$, then clearly $E(x,u, 0 ,t) \leq 0$ for all $t \geq 0$. On the other hand, if $\Re s >0$, then $|x(t)| \rightarrow \infty$ as $t \rightarrow \infty$, which contradicts \eqref{eq:hyperstabilityineq}. Thus $\Pi(\bar s, s) \geq 0$ for all $\Re s > 0$, and hence also for $\Re s = 0$ by continuity (away from the eigenvalues of $A$).
\end{proof}

Popov formulated sufficient conditions for the equivalence between \eqref{eq:KYPFDI} and hyperstability. Proofs and even precise statements of these results are difficult to find in the English language literature. Furthermore, Popov's original argumentation relies on the construction of a normal form for the output-zeroing problem whose complexity detracts from the underlying ideas; in this section we provide streamlined proofs.

We will  need some basic results on the existence of zero-dynamics for LTI systems. Consider an LTI system with input $u \in \KK^n$, state $x \in \KK^n$ and output $v \in \KK^q$:
\begin{align*}
\dot x &= Ax + Bu, \\
v &= Cx + Du.
\end{align*}
Denote by $\mathcal{Z}$ the set of all functions $(x,u)$ such that $\dot x = Ax + Bu$ and $Cx + Du =0$ a.e., where $u \in L^1_\loc(\RR_+; \KK^n)$ and $x$ is absolutely continuous. For a different perspective on the following lemma (whose constructive proof is close in spirit to \cite[\S 36]{popov1973hyperstability}), see \cite[\S 6.1]{isidori2013nonlinear}.  

\begin{lemm} \label{lemm:zerodynamics}
If $G(s) = D + C(sI-A)^{-1}B$ has full normal column rank, then there exists $F \in \KK^{n \times m}$ such that $(x,u) \in \mathcal{Z}$ implies $u = Fx$ a.e., and 
\[
\dot x = (A+BF)x.
\]
Furthermore, $\range D$ is orthogonal to $\range (C+DF)$.
\end{lemm}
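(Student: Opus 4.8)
The plan is to extract the feedback matrix $F$ from the output-zeroing constraint algebraically, using the full normal column rank of $G(s)$, and then verify the claimed orthogonality. Since $(x,u) \in \mathcal{Z}$ satisfies $\dot x = Ax + Bu$ and $Cx + Du = 0$ a.e., I want to show that on this constraint set $u$ is determined by $x$ through a \emph{static} feedback law $u = Fx$. The natural obstruction is that $D$ itself need not be injective, so $u$ cannot be recovered from the pointwise constraint $Cx + Du = 0$ alone; one must also differentiate the constraint and use $\dot x = Ax + Bu$. The fact that $G(s)$ has full normal column rank $m$ (equivalently, the Rosenbrock system matrix $\left[\begin{smallmatrix} A - sI & B \\ C & D\end{smallmatrix}\right]$ has full column rank $n+m$ for generic $s$, by the column-rank identity already recorded in the excerpt) is what guarantees that this finite chain of differentiations eventually pins down $u$.

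Concretely, I would argue as follows. First, reduce to the case where $D = 0$ by the standard trick of replacing the output $v = Cx + Du$: if $D$ has full column rank we are done immediately with $F = -(D^*D)^{-1}D^*C$ and the orthogonality $\range D \perp \range(C + DF)$ is exactly the assertion that $C + DF$ is the component of $C$ orthogonal to $\range D$, which holds since $D^*(C + DF) = D^*C - D^*D(D^*D)^{-1}D^*C = 0$. In general $D$ is not injective; here I would work in a basis adapted to $\ker D$, writing $u = (u_1, u_2)$ with $D = [D_1\ 0]$ and $D_1$ injective. The constraint $Cx + D_1 u_1 = 0$ then determines $u_1 = -(D_1^*D_1)^{-1}D_1^* C x$, and substituting into $\dot x = Ax + B_1 u_1 + B_2 u_2$ gives a reduced system $\dot x = \tilde A x + B_2 u_2$ with a new output obtained by differentiating the constraint: differentiating $Cx + D_1 u_1 = 0$ and using that $u_1$ is already a function of $x$ shows $u_2$ must satisfy a new linear relation $\tilde C x + \tilde D u_2 = 0$. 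Crucially, the normal column rank hypothesis on $G$ is inherited by the reduced transfer function (this is where the column-rank identity $\nrank\left[\begin{smallmatrix} A - sI & B \\ C & D\end{smallmatrix}\right] = n + \nrank G(s)$ from the excerpt does the bookkeeping), so after finitely many steps the remaining input variables face an injective feedthrough matrix and the recursion terminates with $u = Fx$ a.e. for a single matrix $F$, whence $\dot x = (A + BF)x$.

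For the orthogonality statement $\range D \perp \range(C + DF)$: on the zero-dynamics manifold one has $(C + DF)x = Cx + Du = 0$ for every trajectory $x$ of $\dot x = (A+BF)x$ starting in the reachable part of the constraint set, but more usefully, the construction above builds $F$ so that at each stage the surviving feedthrough block is orthogonal to the part of $C$ it acts against; tracking this through the recursion gives $D^*(C + DF) = 0$, i.e. the ranges are orthogonal. Alternatively, I would note that $C + DF$ restricted to the relevant subspace vanishes on $\ker$ of the first-stage map while $\range D$ lies in its complement — but the cleanest route is simply $D^*(C + DF) = 0$ from the explicit pseudoinverse formulas at each stage.

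The main obstacle I anticipate is making the recursion on $\ker D$ rigorous and confirming that the normal-column-rank hypothesis is genuinely preserved at each reduction step — one must check that differentiating the constraint and re-substituting does not spuriously increase the input dimension relative to the rank of the new feedthrough term, and that the process is guaranteed to terminate (the dimension of the "undetermined" input block strictly decreases at each stage, bounded below by $0$, which forces termination in at most $\dim\ker D$ steps). This is essentially the construction of the zero dynamics via a Silverman-type structure algorithm, and the column-rank identity from the excerpt is precisely the tool that keeps the rank accounting under control; once that is set up, the existence of $F$ and the orthogonality fall out of the explicit least-squares formulas used at each step.
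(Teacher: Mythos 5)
Your proposal follows essentially the same route as the paper's proof: a Silverman-type structure algorithm that solves for the feedthrough-determined inputs via Moore--Penrose/least-squares formulas, recurses on a strictly smaller undetermined input space (with full normal column rank guaranteeing each reduced transfer function is nonzero on that space), and obtains the orthogonality from $C+DF=(I-DD^\dagger)C$. The one point to tighten is termination: a single differentiation of the constraint can produce a zero new feedthrough, so the undetermined input dimension need not drop at every differentiation step; the paper avoids this by differentiating straight to the first nonzero Markov parameter $CA^pB$ of the reduced system (which exists exactly because the restricted transfer function is not identically zero), and with that adjustment your recursion terminates as you intend.
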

\begin{proof}
	Since $\nrank G(s)= n$, the restriction of $G(s)$ to any subspace of $\RR^n$ is not identically zero. Of course the normal rank of $G(s)$ is also invariant under any feedback 
	\[
	(A, B, C, D) \mapsto (A+ BF, B, C+ DF, D).
	\]
	Temporarily assume that $D=0$. Set $\mathcal{U} = \RR^n$ and $\mathcal{V} = \RR^q$. Thus we view $B$ as a function on $\mathcal{U}$, and $C$ as a function with values in $\mathcal{V}$.  Since $G(s)$ is not identically zero, there exists a smallest integer $p \geq 0$ for which the Markov parameter $CA^p B$ is nonzero (otherwise $G(s)$ would vanish to infinite order at infinity). Define
	\[
	F_1 = -(CA^{p}B)^\dagger CA^{p+1},
	\] 
	where $(CA^pB)^\dagger$ denotes the Moore--Penrose inverse of the operator $CA^pB : \mathcal{U} \rightarrow \mathcal{V}$. Set $\mathcal{U}_1 = \ker CA^p B \subset \mathcal{U}$, and notice that $\dim U_1 < \dim U$ since $CA^p B \neq 0$. 
	
	Now suppose that $(x,u) \in \mathcal{Z}$. Differentiating  $p$ times the identity $Cx=0$ yields 
	\[
	CA^{p+1}x = -CA^p Bu
	\]
	a.e.
	Thus $u = F_1x + u_1$ a.e., where $u_1 = ((CA^pB)^\dagger (CA^pB) - I)u$ is contained  in $\mathcal{U}_1$, and
	\[
	\begin{aligned} \label{eq:reducedsystem}
	\dot x &= (A + BF_1) x + Bu_1, \\
	v &= Cx.
	\end{aligned}
	\]
	If $\mathcal{U}_1 = \{0\}$, then we are done. Otherwise, use that the transfer of the system \eqref{eq:reducedsystem} acting on $\mathcal{U}_1$ is not identically zero either. Repeat the same procedure, replacing $A$ with $A+ BF_1$. Since the dimension of the input space is strictly reduced at each step, we obtain a terminating chain of subspaces 
	\[
	\{ 0\} = \mathcal{U}_N \subset \cdots \subset \mathcal{U}_1 \subset \mathcal{U}
	\] 
	and a sequence $F_1, \ldots, F_N$. For $i > 1$ the range of $F_i$ is contained in $\mathcal{U}_{i-1} \subset \mathcal{U} = \RR^n$, and by a slight abuse of notation we consider $F_i$ as a map $\RR^m \rightarrow \RR^n$.  If  $F = F_1 + \cdots + F_N$, then iteratively following the steps above, we find that
	\[
	\dot x = (A+BF)x, \quad u = Fx
	\]
	whenever  $(x,u) \in \mathcal{Z}$, as desired.

Finally, suppose that $D \neq 0$. In that case, set $\mathcal{U}_0 = \ker D$ and $F_0 = -D^\dagger C$. Let $u_0 = u - F_0x$ and consider the transformed equation
\begin{align*} 
\dot x &= (A + BF_0)x + Bu_0, \\ 
v &= (C + DF_0)x + Du_0.
\end{align*}
If $(x,u) \in \mathcal{Z}$, then $u_0$ is entirely contained in $\mathcal{U}_0$. Now $I-D D^\dagger$ is the orthogonal projection onto $\mathcal{V}_0 = (\range D)^\perp$, so $\range D$ is orthogonal to $\range(C+DF_0)$ owing to
\[
C+DF_0 = (I-DD^\dagger)C.
\]
 To finish the proof, repeat the argument above from the beginning, replacing $(A,C)$ with $(A+BF_0, C+DF_0)$, the input space $\mathcal{U}$ with $\mathcal{U}_0$, and the output space $\mathcal{V}$ with $\mathcal{V}_0$. By construction $F_i$ for $i > 0$ takes values in $\mathcal{U}_0$, so 
 \[
 C+DF = C+DF_0
 \] 
 and hence $\range D$ remains orthogonal to $\range (C+DF)$.
\end{proof}

Lemma \ref{lemm:zerodynamics} implies that if $(x,u) \in \mathcal{Z}$, then $x$ is entirely contained in the unobservable subspace 
\[
\mathcal{S} = \bigcap_{l =0}^\infty \ker (C+DF)(A + BF)^l.
\]
In particular, the map $\mathcal{Z} \ni (x,u) \mapsto x(0)$ takes its values in $\mathcal{S}$. This map is an isomorphism  between  $\mathcal{Z}$ and $\mathcal{S}$, whose inverse is 
\begin{equation} \label{eq:zerodynamicsisomorphism}
\mathcal{S} \ni x^0 \mapsto (e^{(A+BF)t}x^0, Fe^{(A+BF)t}x^0).
\end{equation}
In the following lemma we continue to assume the hypotheses of Lemma \ref{lemm:zerodynamics}.

\begin{lemm} \label{lemm:zerodynamicsspectrum}
A number $s \in \CC$ is an eigenvalue of $(A+BF)|_{\mathcal{S}}$ if and only if
\[
\dim \ker \begin{bmatrix}
A- s I & B \\
C & D
\end{bmatrix} \neq 0.
\]
\end{lemm}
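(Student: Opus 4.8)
The plan is to characterize the eigenvalues of $(A+BF)|_{\mathcal S}$ directly using the description of $\mathcal S$ as the unobservable subspace of the pair $(C+DF,\,A+BF)$ together with the isomorphism \eqref{eq:zerodynamicsisomorphism} between $\mathcal Z$ and $\mathcal S$. Since $\mathcal S$ is $(A+BF)$-invariant, $s$ is an eigenvalue of the restriction exactly when there is a nonzero $\xi \in \mathcal S$ with $(A+BF)\xi = s\xi$. I would show this condition is equivalent to the existence of a nonzero $(\xi,\mu)$ with $(A-sI)\xi + B\mu = 0$ and $C\xi + D\mu = 0$, i.e. to $\begin{bmatrix} A-sI & B \\ C & D\end{bmatrix}$ having nontrivial kernel.

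First, for the forward direction: given such a $\xi \in \mathcal S \setminus\{0\}$ with $(A+BF)\xi = s\xi$, set $\mu = F\xi$. Then $(A-sI)\xi + B\mu = (A+BF-sI)\xi = 0$, and since $\xi \in \mathcal S \subset \ker(C+DF)$ we get $C\xi + D\mu = (C+DF)\xi = 0$. The vector $(\xi,\mu)$ is nonzero because $\xi \neq 0$, so the kernel is nontrivial.

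Conversely, suppose $\begin{bmatrix} A-sI & B\\ C& D\end{bmatrix}\begin{bmatrix}\xi\\\mu\end{bmatrix} = 0$ with $(\xi,\mu)\neq 0$. Consider the exponential trajectory $x(t) = e^{st}\xi$, $u(t) = e^{st}\mu$; then $\dot x = Ax + Bu$ and $Cx + Du = e^{st}(C\xi + D\mu) = 0$, so $(x,u)\in\mathcal Z$. By Lemma \ref{lemm:zerodynamics}, $u = Fx$ a.e., hence $\mu = F\xi$ (evaluating at $t=0$); in particular if $\xi = 0$ then $\mu = 0$, contradicting $(\xi,\mu)\neq 0$, so $\xi \neq 0$. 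Moreover the isomorphism \eqref{eq:zerodynamicsisomorphism} places $x(0) = \xi$ in $\mathcal S$, and $(A+BF)\xi = (A-sI)\xi + B F\xi + s\xi = (A-sI)\xi + B\mu + s\xi = s\xi$. Thus $s$ is an eigenvalue of $(A+BF)|_{\mathcal S}$.

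I expect the only real subtlety to be making sure the reduction $\mu = F\xi$ is legitimate — that is, that the exponential solution genuinely lies in $\mathcal Z$ so that Lemma \ref{lemm:zerodynamics} applies, and that $\xi$ is forced to be nonzero rather than the whole burden of nontriviality falling on $\mu$. Both points are handled by the argument above using full normal column rank of $G(s)$ (inherited by Lemma \ref{lemm:zerodynamics}); everything else is bookkeeping with the invariance of $\mathcal S$ and the explicit form of the isomorphism.
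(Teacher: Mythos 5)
Your proposal is correct and follows essentially the same route as the paper: the forward direction takes $u^0 = F x^0$ and reads off the kernel condition from $\mathcal{S} \subset \ker(C+DF)$, and the converse runs the exponential trajectory $(e^{st}\xi, e^{st}\mu)$ through $\mathcal{Z}$, invokes Lemma \ref{lemm:zerodynamics} to force $\mu = F\xi$ and $\xi \in \mathcal{S}$, and rules out $\xi = 0$ exactly as the paper's (terse) argument intends. No gaps.
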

\begin{proof}
If $x^0$ is an eigenvector of $(A+BF)|_{\mathcal{S}}$ with eigenvalue $\lambda \in \CC$ and we define $u^0 = Fx^0$, then \eqref{eq:zerodynamicsisomorphism} implies 
\[
 \begin{bmatrix}
A- \lambda I & B \\
C & D
\end{bmatrix} \begin{bmatrix}
x^0 \\ u^0
\end{bmatrix} = 0.
\]
The converse argument is similar.
\end{proof}

In order to show that a given system is hyperstable, it is sufficient (but not necessary) to show that the system is dissipative with a positive definite quadratic storage function. This follows immediately from the dissipation inequality \eqref{eq:virtualstorage}.

\begin{prop}[{\cite[\S 16, Theorem 1]{popov1973hyperstability}}] \label{prop:popovhyperstability1}
Suppose $(A,B)$ is controllable, $(A,B,M)$ is minimally stable, and $\det \Pi(-s,s)$ is not identically zero. If \eqref{eq:KYPFDI} holds, then \eqref{eq:LMI} admits a positive definite solution.
\end{prop}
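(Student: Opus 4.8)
The plan is to produce an explicit positive definite solution, namely the required-supply solution $P_+$ (the maximal Hermitian solution of \eqref{eq:LMI}), and to show that its kernel is trivial by playing two spectral properties of the zero dynamics against each other.

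First I would collect the consequences of the hypotheses. Since $(A,B)$ is controllable and \eqref{eq:KYPFDI} holds, Lemma \ref{lemm:KYP} gives a Hermitian solution of \eqref{eq:LMI}; minimal stability forces every such solution --- in particular the extremal $P_\pm$ --- to be positive semidefinite, so $(A,B,M)$ is dissipative. Because $\det \Pi(-s,s)\not\equiv 0$ and $\Pi(-s,s)$ is $n\times n$, its normal rank is $n$, so by \eqref{eq:rankattained} the factorization \eqref{eq:lure} attached to $P_+$ has exactly $q=n$ rows; write $K_+\in\KK^{n\times m}$, $L_+\in\KK^{n\times n}$, so that the pencil $\begin{bmatrix}A-sI & B\\ K_+ & L_+\end{bmatrix}$ is square and $G_+(s)=L_++K_+(sI-A)^{-1}B$ has full normal column rank $n$. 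The claim is that $P_+>0$; since $P_+$ solves \eqref{eq:LMI}, this proves the proposition. The one computational input is the pointwise dissipation identity implied by \eqref{eq:lure}: along any $\dot x=Ax+Bu$,
\[
\sigma(x,u)=\tfrac{d}{dt}\langle P_+x,x\rangle + |K_+x+L_+u|^2,
\]
hence $\langle P_+x(t),x(t)\rangle=\langle P_+x(0),x(0)\rangle + E(x,u,0,t)-\int_0^t|K_+x+L_+u|^2\,ds$.

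Next I would analyze $\ker P_+$. Apply Lemma \ref{lemm:zerodynamics} to $(A,B,K_+,L_+)$ to get $F$ such that every $(x,u)\in\mathcal{Z}$ satisfies $u=Fx$ a.e., $\dot x=(A+BF)x$, and $x(0)\in\mathcal{S}$, the unobservable subspace of $(A+BF,\,K_++L_+F)$. Fix $x^0\in\ker P_+$ and, by minimal stability (Definition \ref{defi:minimal}), choose $(x,u)$ with $x(0)=x^0$, $E(x,u,0,t)\le0$ for all $t$, and $x(t)\to0$. In the identity above the left side is nonnegative while the right side is $\le E(x,u,0,t)\le0$, so both vanish identically: $\langle P_+x(t),x(t)\rangle\equiv0$ and $K_+x+L_+u\equiv0$ a.e. Therefore $(x,u)\in\mathcal{Z}$, so $x^0\in\mathcal{S}$, $u=Fx$, $\dot x=(A+BF)x$, and $x(t)=e^{(A+BF)t}x^0\in\ker P_+$ for all $t$. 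Thus $\ker P_+$ is an $(A+BF)$-invariant subspace of $\mathcal{S}$, and since $e^{(A+BF)t}x^0\to0$ for every $x^0\in\ker P_+$, the restriction $(A+BF)|_{\ker P_+}$ is Hurwitz. On the other hand, Lemma \ref{lemm:lure} applied to $P_+$ gives $\dim\ker\begin{bmatrix}A-sI & B\\ K_+ & L_+\end{bmatrix}^*=0$ for $\Re s<0$; as that pencil is square, its kernel also vanishes there, so by Lemma \ref{lemm:zerodynamicsspectrum} the operator $(A+BF)|_{\mathcal{S}}$ has no eigenvalue in the open left half-plane. If $\ker P_+\ne\{0\}$, then $(A+BF)|_{\ker P_+}$ has an eigenvalue $\lambda$ with $\Re\lambda<0$, which is also an eigenvalue of $(A+BF)|_{\mathcal{S}}$ because $\ker P_+\subseteq\mathcal{S}$ is invariant --- a contradiction. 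Hence $\ker P_+=\{0\}$ and $P_+>0$.

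The main obstacle --- and the reason the machinery of Lemmas \ref{lemm:lure}, \ref{lemm:zerodynamics}, and \ref{lemm:zerodynamicsspectrum} is assembled beforehand --- is exactly this reconciliation of two spectral statements about the zero dynamics: minimal stability forces $A+BF$ to be \emph{asymptotically stable} on $\ker P_+$, while extremality of the required-supply solution forces $A+BF$ to be \emph{anti-stable} on the larger zero-dynamics subspace $\mathcal{S}$, and the only way out is $\ker P_+=\{0\}$. Care is needed to work with the maximal solution $P_+$ rather than the available storage $P_-$ (the latter would place the anti-stable half-plane on the wrong side and yield no contradiction), and to exploit $q=\nrank\Pi(-s,s)=n$ so that the relevant pencil is square and the cokernel condition of Lemma \ref{lemm:lure} transfers to the kernel condition needed in Lemma \ref{lemm:zerodynamicsspectrum}. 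Everything else is routine bookkeeping with the dissipation identity.
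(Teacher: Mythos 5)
Your proof is correct and follows essentially the same route as the paper: take the extremal (required-supply) solution $P_+$, use minimal stability plus the factorized dissipation identity to show any trajectory starting in $\ker P_+$ lies in the zero dynamics, then play the anti-stability of $(A+BF)|_{\mathcal{S}}$ from Lemmas \ref{lemm:lure} and \ref{lemm:zerodynamicsspectrum} against the decay $x(t)\to 0$ to force $\ker P_+=\{0\}$. The only cosmetic difference is that you route the contradiction through the $(A+BF)$-invariance of $\ker P_+$, whereas the paper concludes directly from $x(t)=e^{(A+BF)t}x^0\to 0$ with $x^0\in\mathcal{S}$; the substance is identical.
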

\begin{proof}
We show that $P = P_+$ as in \eqref{eq:maximumsolution} is positive definite. Consider a factorization of the form \eqref{eq:lure}. Note that $L$ is square from the comments following Lemma \ref{lemm:lure} and from the fact that $\Pi(-s,s)$ has full normal rank. Also, with $G(s) = L + K(sI - A)^{-1}B$,
\[
\nrank G(s) = \nrank \Pi(-s,s) = n
\]
 from the equality \eqref{eq:spectralfactorization} considered on the imaginary axis. 
 
 Suppose that $x_0$ is such that $Px^0 = 0$. Let $(x,u)$ be a trajectory as in the definition of minimal stability, with $x(0) = x^0$. Since $P \geq 0$ by minimal stability, the dissipation inequality 
 \[
\langle Px(t), x(t) \rangle - \langle Px^0, x^0 \rangle + \int_0^t |Kx(s) + Lu(s)|^2\, ds = E(x,u,0,t)
 \]
implies that $Px = 0$ and $Kx + Lu = 0$ a.e. In terms of the LTI system
  \begin{align*}
  \dot x &= Ax + Bu, \\
  v &= Kx + Lu,
  \end{align*}
  we thus have $(x,u) \in \mathcal{Z}$. In particular, $x$ is contained in $\mathcal{S}$ and $\dot x = (A+BF)x$ according to Lemma \ref{lemm:zerodynamics}. The system matrix
 \[
\begin{bmatrix}
A-sI & B \\
K & L
\end{bmatrix}
\]
is square, and Lemma \ref{lemm:lure} implies that it is invertible in the open left-half plane. Consequently $(A+BF)|_{\mathcal{S}}$ has no eigenvalues in the open left half-plane according to Lemma \ref{lemm:zerodynamicsspectrum}. Since by definition $x(t) \rightarrow 0$ as $t \rightarrow \infty$, it must be that $x(0) = x^0 = 0$.
\end{proof}

Popov showed that for hyperstability, the rank condition on $\Pi(-s,s)$ can be relaxed somewhat. Adopting the notation and hypotheses of Lemma \ref{lemm:zerodynamics}, we need an additional observation:
\[
\range B|_{\ker D} \cap \mathcal{S} = \{0\}.
\]
 Indeed, suppose to the contrary that there exists $u^0 \in \ker D$ for which $Bu^0 \in \mathcal{S}$. Set $u(t) = e^{st}u^0$. If we choose $s \in \CC$ not an eigenvalue of $(A+BF)|_{\mathcal{S}}$, then $x(t) = e^{st}(sI - A - BF)^{-1}Bu_0$ satisfies
\[
(x, u + Fx) \in \mathcal{Z}.
\]
Thus $u = 0$ identically, and in particular $u^0 = 0$. 

Since $\range B|_{\ker D} \cap \mathcal{S} = \{0\}$, we can choose an observability decomposition in the form
\begin{equation} \label{eq:zerodynamicskalman}
A+BF = \begin{bmatrix}
A_{11} & 0 \\
A_{21} & A_{22}
\end{bmatrix}, \quad B = \begin{bmatrix}
B_{11} & B_{12} \\
0 & B_{22},
\end{bmatrix}, \quad C + DF = \begin{bmatrix}
C_1 & 0
\end{bmatrix},
\end{equation}
where $(A_{11}, C_1)$ is observable. Here $A_{22} = (A+BF)|_{\mathcal{S}}$, and the block structure of $B$ is with respect to the decomposition $\RR^n = \ker D \oplus (\ker D)^\perp$. Furthermore, $B_{11}$ has full column rank: if there is $u_1 \in \ker D$ for which $B_{11}u_1 = 0$, then $G(s)$ cannot have full normal rank.

We also use that hyperstability is invariant under transformations of the form \eqref{eq:feedbackcontrol}, which follows immediately from the fact that $c^{-1}|x| \leq |x_\sim| \leq c|x|$ for some $c > 0$ and $E(x,u,0,t) = E_\sim(x_\sim,u_\sim,0,t)$.

\begin{prop}[{\cite[\S 16, Theorem 1]{popov1973hyperstability}}]  \label{prop:popovhyperstability2}
	Suppose that $B$ is nonzero, $(A,B,M)$ is minimally stable, and $\det \Pi(\eta, \zeta)$ is not identically zero. Then $(A,B,M)$ is hyperstable if and only if \eqref{eq:KYPFDI} holds.
\end{prop}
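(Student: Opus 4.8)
Only the ``if'' direction requires work: by Lemma~\ref{lemm:hyperstabilityimpliesFDI} hyperstability already implies \eqref{eq:popovnegative}, and \eqref{eq:KYPFDI} is its restriction to the imaginary axis. For the converse I would first reduce to the case that $(A,B)$ is controllable. Put $(A,B)$ in controllability form; the uncontrollable block evolves autonomously, so minimal stability, applied to initial states concentrated in it, forces that block to be Hurwitz. The Popov function, the frequency condition \eqref{eq:KYPFDI}, minimal stability, and $\det\Pi(\eta,\zeta)\not\equiv 0$ all pass to the controllable subsystem $(A_{11},B_1,M_{11})$, and conversely hyperstability of $(A_{11},B_1,M_{11})$, together with the exponential decay of the uncontrollable coordinates --- which renders the extra (integrable) cross terms in the supply rate harmless --- recovers hyperstability of $(A,B,M)$. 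So assume henceforth that $(A,B)$ is controllable.

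Since minimal stability makes every normalized storage function nonnegative, $(A,B,M)$ is dissipative, and by Lemma~\ref{lemm:cyclodissipative} and Lemma~\ref{lemm:KYP} there is a matrix $P_+\ge 0$ solving $\Lambda(P_+)\le 0$ --- the one associated with the required supply, as in Proposition~\ref{prop:popovhyperstability1}. Fix a factorization \eqref{eq:lure} with $G(s)=L+K(sI-A)^{-1}B$. If $\dot x=Ax+Bu$ satisfies $E(x,u,0,t)\le\beta^2$ for all $t\ge 0$, the dissipation identity reads
\[
\langle P_+x(t),x(t)\rangle+\int_0^t|Kx(s)+Lu(s)|^2\,ds=\langle P_+x^0,x^0\rangle+E(x,u,0,t),
\]
so $x(t)$ is bounded by a constant multiple of $|x^0|+\beta$ on $\range P_+$ and $Kx+Lu\in L^2(\RR_+)$. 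When $\nrank\Pi(-s,s)=n$ one obtains $P_+>0$ exactly as in Proposition~\ref{prop:popovhyperstability1} and is done; in general the remaining task is to bound the component of $x(t)$ in $\ker P_+$.

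For this I would run the zero-dynamics machinery prepared just before the statement. The dissipation identity forces the minimal-stability trajectories through $\ker P_+$ to be output-nulling, and --- via the normal form for the output-nulling dynamics set up above, which is available precisely because $\det\Pi(\eta,\zeta)\not\equiv 0$ --- one gets $\ker P_+\subseteq\mathcal{S}$; moreover $0\le P_-\le P_+$ gives $\ker P_+\subseteq\ker P_-\subseteq\ker(P_+-P_-)$, the subspace carrying the neutral part of the zero dynamics. After the feedback $u=Fx+\tilde u$ of Lemma~\ref{lemm:zerodynamics} the output splits orthogonally, so $(K+LF)x\in L^2$ and the $(\ker L)^\perp$-part of $\tilde u$ lies in $L^2$; in the observability decomposition \eqref{eq:zerodynamicskalman} the coordinates of $x$ transverse to $\mathcal{S}$ lie in $\range P_+$ and are already controlled, while the $\mathcal{S}$-coordinates satisfy a linear equation driven through $B_{22}$ by an $L^2$ signal. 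It then remains to check that $A_{22}=(A+BF)|_{\mathcal{S}}$ generates a bounded semigroup: Lemma~\ref{lemm:lure} applied to $P_+$ and to $P_-$ (via Lemma~\ref{lemm:zerodynamicsspectrum}, using $\ker P_+\subseteq\ker P_-$) excludes eigenvalues off the imaginary axis, while the observation $\range B|_{\ker D}\cap\mathcal{S}=\{0\}$ together with the full column rank of $B_{11}$ excludes nontrivial Jordan blocks on it. A bounded-input estimate for the resulting stable/neutral semigroup then bounds the $\mathcal{S}$-coordinates of $x(t)$ by a constant multiple of $|x^0|+\beta$, completing the proof.

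The main obstacle is the case $\nrank\Pi(-s,s)<n$, where $L$ is not square, $\ker P_+\ne\{0\}$, and the positive-definite-storage-function route of Proposition~\ref{prop:popovhyperstability1} is unavailable: one must then show that the residual dynamics on $\ker P_+\subseteq\mathcal{S}$ are Lyapunov stable rather than merely asymptotically stable, the exclusion of imaginary-axis Jordan blocks being the delicate point for which the two-variable nondegeneracy $\det\Pi(\eta,\zeta)\not\equiv 0$ --- rather than the stronger $\det\Pi(-s,s)\not\equiv 0$ --- is precisely designed.
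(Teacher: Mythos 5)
There is a genuine gap, and it sits exactly at the point you flag as ``the delicate point.'' The paper's proof hinges on one idea your proposal misses: the zero-dynamics machinery is not applied to the output $v=Kx+Lu$ alone, but to the \emph{augmented} output with $C=\begin{bmatrix}K\\ P^{1/2}\end{bmatrix}$, $D=\begin{bmatrix}L\\ 0\end{bmatrix}$, whose transfer matrix is $\Psi(s)=\begin{bmatrix}G(s)\\ H(s)\end{bmatrix}$ with $H(s)=P^{1/2}(sI-A)^{-1}B$. It is $\Psi(s)$, not $G(s)$, whose full normal column rank follows from $\det\Pi(\eta,\zeta)\not\equiv 0$: the two-variable hypothesis only controls $(\eta+\zeta)H(\bar\eta)^*H(\zeta)+G(\bar\eta)^*G(\zeta)$, and in the degenerate case $\nrank\Pi(-s,s)<n$ the matrix $G(s)$ alone need not have full column rank, so Lemma \ref{lemm:zerodynamics} is simply not available for the output $Kx+Lu$ that your argument uses. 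With the augmented output one gets $\ker P=\mathcal{S}$ with equality, and---this is the second payoff---the clause $x(t)\to 0$ in the definition of minimal stability immediately forces $(A+BF)|_{\mathcal{S}}$ to be \emph{Hurwitz}: any eigenvector trajectory $e^{\lambda t}x^0$ lies in $\mathcal{Z}$ and must decay, so $\Re\lambda<0$. No marginal spectrum, bounded semigroups, or Jordan-block analysis ever arises; the rest of the paper's proof is the estimate you correctly sketch (bound the observable coordinates through the dissipation inequality, bound the $\mathcal{S}$-coordinates by variation of parameters using $u_2\in L^2$, which comes from the orthogonality of $\range(K+LF)$ and $\range L$).

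Your proposed substitute for this step does not work. Lemma \ref{lemm:lure} controls the \emph{left} kernel of $\begin{bmatrix}A-sI & B\\ K_\pm & L_\pm\end{bmatrix}$, whereas Lemma \ref{lemm:zerodynamicsspectrum} characterizes eigenvalues of the zero dynamics through the \emph{right} kernel; these coincide only when the system matrix is square, i.e. when $q=n$ and $L$ is square---precisely the full-rank situation of Proposition \ref{prop:popovhyperstability1} that is excluded here (when $q<n$ the matrix is strictly wide and its right kernel is nontrivial for every $s$). Likewise, the observation $\range B|_{\ker D}\cap\mathcal{S}=\{0\}$ and the full column rank of $B_{11}$ say nothing about the Jordan structure of $A_{22}$ on $\mathcal{S}$; in the paper they serve only to justify the block form \eqref{eq:zerodynamicskalman} and the coercivity $|Lu_\sim|\gtrsim|u_2|$. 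So the asserted exclusion of off-axis eigenvalues and of imaginary-axis Jordan blocks is unsupported, and the whole ``stable/neutral semigroup'' scenario is a detour: the correct route is to augment the output with $P^{1/2}x$ so that minimal stability itself delivers exponential stability of the zero dynamics. (Your reduction to the controllable case and the identification $\ker P_+\subseteq\ker P_-$ are fine but peripheral; the paper simply defers the uncontrollable part to Popov.)
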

\begin{proof}
	We provide a proof when $(A,B)$ is controllable. Otherwise, one can pass to a controllable subsystem and separately estimate the residual part. Since this is relatively straightfoward and mostly unrelated to the rest of the proof, we refer the reader to \cite[\S 16]{popov1973hyperstability} for the details.
	
	According to Lemma \ref{lemm:hyperstabilityimpliesFDI}, hyperstability implies that \eqref{eq:KYPFDI} holds, so we focus on the converse. Since $(A,B)$ is controllable, \eqref{eq:KYPFDI} implies that \eqref{eq:LMI} has a Hermitian solution $P$, which is positive semidefinite by minimal stability. Consider any factorization \eqref{eq:lure}. In addition to $G(s) = L + K(sI-A)^{-1}B$, also define $H(s) = P^{1/2}(sI - A)^{-1}B$. Thus \eqref{eq:popovfactorization} can be written in the form
	\[
	\Pi(\eta,\zeta) = (\eta + \zeta)H(\bar \eta)^* H(\zeta) + G(\bar \eta)^*G(\zeta).
	\]
	By hypothesis $\det \Pi(\eta, \zeta)$ is not identically zero, which implies that
	\[
	\Psi(s) =\begin{bmatrix}
	G(s) \\
	H(s)
	\end{bmatrix}
	\]
	has full normal column rank: otherwise, for each $\zeta_0 \in \CC$ not an eigenvalue of $A$, there would exist $u^0 \in\KK^n$ such that $G(\zeta_0)u^0 = H(\zeta_0)u^0 = 0$. This implies that $\det \Pi(\eta,\zeta_0) = 0$ for all $\eta \in \CC$ not an eigenvalue of $A$, and hence the determinant is zero as a rational function of two variables.
	
 Apply Lemma \ref{lemm:zerodynamics} with the system matrices $(A,B,C,D)$, where
	\begin{equation} \label{eq:popovCD}
	C = \begin{bmatrix}
	K \\
	P^{1/2}
	\end{bmatrix}, \quad D = \begin{bmatrix}
	L \\ 0
	\end{bmatrix}.
	\end{equation}
	Note that $\Psi(s) = D + C(sI-A)^{-1}B$. From this we deduce two important facts.
	
	\begin{inparaenum}
		\item Firstly, $\ker P = \mathcal{S}$. To see this, we argue as in Proposition \ref{prop:popovhyperstability1}: if $x^0 \in \KK^m$ is such that $Px^0 = 0$, then by minimal stability there exists $(x,u) \in \mathcal{Z}$ for which $x(0) = x^0$, and hence $x^0 \in \mathcal{S}$. Conversely, if $x^0 \in \mathcal{S}$, then $(C+DF)x^0 = 0$, which implies $P^{1/2}x^0 =0$ by \eqref{eq:popovCD}.
		
		\item We also show that $(A+BF)|_{\mathcal{S}}$ is Hurwitz. Let $x^0 \in \mathcal{S}$ be an eigenvector of $(A+BF)|_{\mathcal{S}}$ with eigenvalue $\lambda \in \CC$. By minimal stability, there exists $(x,u) \in \mathcal{Z}$ such that $x(0) = x^0$ and $x(t) \rightarrow 0$ as $t\rightarrow \infty$. Thus $x(t) = e^{(A+BF)t}x^0 = e^{\lambda t}x^0$, so $\Re \lambda < 0$.
		
	\end{inparaenum}
	
	Since hyperstability is invariant under transformations of the form \eqref{eq:feedbackcontrol}, we now proceed to consider the system obtained by replacing $u$ with $u_\sim = u - Fx$ (we also freely consider transformations $x \mapsto T^{-1}x$, but this is not reflected in the notation). Fix a decomposition \eqref{eq:zerodynamicskalman}, and partition 
	\[
	x = (x_1, x_2), \quad u_\sim = (u_1, u_2)
	\]
	accordingly, where $x_2 \in \mathcal{S}$ and $u_1 \in \ker D$. Thus
	\begin{align*}
	\dot  x_1 &= A_{11}x_1 + B_{11}u_1 + B_{12}u_2, \\
	\dot x_2 &= A_{21}x_1 + A_{22}x_2 + B_{22}u_2
	\end{align*}
Suppose that $E_\sim(x,u_\sim,0,t) \leq \beta^2$ for all $t \geq 0$. Since $\ker P = \mathcal{S}$, the dissipation inequality implies that
\begin{equation} \label{eq:observableiscontrolled}
|x_1(t)| \leq c(|x(0)| + \beta)
\end{equation}
for some $c > 0$. It thus remains to estimate $|x_2(t)|$, where
\[
x_2(t) = e^{A_{22}t}x_2(0) + \int_0^t e^{A_{22} (t-s)} \left( A_{21}x_1(s) + B_{22} u_2(s) \right) \, ds.
\]
 In light of \eqref{eq:observableiscontrolled} and the fact that $A_{22} = (A+BF)|_{\mathcal{S}}$ is Hurwitz, the only troublesome term is
\begin{equation} \label{eq:variationofparameters}
\int_0^t e^{A_{22} (t-s)}  B_{22} u_2(s) \, ds.
\end{equation}
According to Lemma \ref{lemm:zerodynamics}, $\range(C+DF)$ is orthogonal to $\range D$, so $\range (K + LF)$ and $\range L$ are also orthogonal by \eqref{eq:popovCD}. Now the dissipation inequality implies that
\[
\int_{0}^t |(K+LF)x(s) + Lu_\sim(s)|^2 \, ds \leq \beta^2 + |P^{1/2}x(0)|^2.
\]
By orthogonality, $|(K +LF)x + Lu_\sim|^2 \geq |Lu_\sim|^2 \geq |u_2|^2/c$ for some $c>0$. Thus
\[
\int_0^t |u_2(s)|^2 \, ds \leq c (|x(0)|^2 + \beta^2)
\]
for some $c>0$. By Cauchy--Schwarz, \eqref{eq:variationofparameters} can be bounded (in absolute value) by a multiple of $|x(0)| + \beta$, as desired.
\end{proof}

There are cases where Proposition \ref{prop:popovhyperstability1} is not applicable, but where nevertheless observability conditions can be used to show that every Hermitian solution of $\Lambda(P) \leq 0$ is nonsingular. For an example where this is useful, see \S \ref{subsect:tau>0proof}.

 \begin{lemm} \label{lemm:barabanov}
 	Suppose that $P = P^*$ satisfies $\Lambda(P) \leq 0$. If $(A,S)$ is observable and there exists $K \in \KK^{n\times n}$ such that 
 	\[
 	Q + 2\Re S^* KS + (KS)^*R(KS) \leq 0, \quad \det (I + RK) \neq 0,
 	\]
 	then $\det P \neq 0$. 
 \end{lemm}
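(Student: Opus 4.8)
The plan is to normalize the data by a state-feedback transformation and then argue that $\ker P$ is an invariant subspace of the closed-loop dynamics $A+BKS$ that lies inside $\ker S$, so that observability of $(A,S)$ forces $\ker P = \{0\}$.

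\textbf{Reduction.} First I would apply the transformations \eqref{eq:feedbackcontrol} with $T = I$ and $F = KS$. The hypothesis $Q + 2\Re S^*KS + (KS)^*R(KS)\leq 0$ is precisely the statement $Q_\sim \leq 0$, while $S_\sim = (I + RK)S$ has the same kernel as $S$ because $\det(I+RK)\neq 0$; moreover on $\ker S$ one has $(A + BKS)x = Ax$, so the only $(A+BKS)$-invariant subspace of $\ker S_\sim=\ker S$ is still $\{0\}$, i.e.\ $(A_\sim, S_\sim)$ is observable. Since $P_\sim = P$ and $\Lambda_\sim(P)\leq 0$, I may replace all the data by the transformed data; note that the $(2,2)$-block of $\Lambda_\sim(P)\leq 0$ also gives $R \geq 0$. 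From here on I write $A,B,Q,R,S$ for the transformed matrices, so that $Q\leq 0$, $R\geq 0$, and $(A,S)$ is observable, and I must show $\ker P = \{0\}$.

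\textbf{Two inclusions.} Fix $x^0 \in \ker P$. The $(1,1)$-block of $\Lambda(P)\leq 0$ gives $A^*P + PA \leq Q \leq 0$, and $\langle(A^*P+PA)x^0,x^0\rangle = 2\Re\langle Px^0, Ax^0\rangle = 0$; a negative-semidefinite Hermitian matrix annihilates every vector on which its quadratic form vanishes, so $(A^*P+PA)x^0 = 0$, and since $Px^0 = 0$ this yields $PAx^0 = 0$, i.e.\ $\ker P$ is $A$-invariant. Next, evaluating the quadratic form of $\Lambda(P)\leq 0$ at $(x^0, w)$ for arbitrary $w \in \KK^n$ and using $Px^0 = 0$ collapses it to $-\langle Qx^0,x^0\rangle - 2\Re\langle w, Sx^0\rangle - \langle Rw,w\rangle \leq 0$; replacing $w$ by $tw$ with $t\in\RR$ and using $Q\leq 0$, $R\geq 0$ forces the coefficient of $t$ to vanish, and then replacing $w$ by $iw$ gives $Sx^0 = 0$, i.e.\ $\ker P \subseteq \ker S$.

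\textbf{Conclusion.} Thus $\ker P$ is an $A$-invariant subspace contained in $\ker S$; since $(A,S)$ is observable, its unobservable subspace is $\{0\}$, and that subspace is the largest $A$-invariant subspace of $\ker S$, so $\ker P = \{0\}$ and hence $\det P\neq 0$. The only place requiring a little care is the second inclusion, since $R$ is merely positive semidefinite rather than definite; but the elementary fact that $2ta + t^2 b \geq c$ for all real $t$ with $b, c \geq 0$ forces $a = 0$ disposes of this, so I expect no real obstacle. (Controllability of $(A,B)$ is never used, consistent with its absence from the hypotheses.)
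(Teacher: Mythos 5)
Your proof is correct, and its first step (the feedback $F=KS$ via \eqref{eq:feedbackcontrol}, using $\det(I+RK)\neq 0$ to see that observability survives because $\ker S_\sim=\ker S$ and $A+BKS$ agrees with $A$ on $\ker S$) is exactly the paper's reduction. Where you diverge is in the second half: the paper applies a \emph{second} feedback $F=-\delta S$ with $\delta\in(0,2/\|R\|)$ to the normalized data, reads off the Lyapunov inequality $(A-\delta BS)^*P+P(A-\delta BS)\leq -S^*(2\delta I-\delta^2R)S$ from the $(1,1)$ block, and then invokes standard facts about Lyapunov inequalities with an observable right-hand side to conclude $\det P\neq 0$. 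You instead work directly with the full LMI \eqref{eq:LMI}: the $(1,1)$ block gives $A$-invariance of $\ker P$ (via the fact that a negative semidefinite Hermitian matrix kills any vector where its form vanishes), and testing the quadratic form at $(x^0,w)$ with $x^0\in\ker P$ and arbitrary $w$, together with $Q_\sim\leq 0$ and $R\geq 0$ (correctly extracted from the $(2,2)$ block), gives $\ker P\subseteq\ker S$; observability then finishes. The two arguments rest on the same mechanism --- $\ker P$ is an invariant subspace inside $\ker S$ --- but yours is self-contained and slightly more economical: it needs no auxiliary parameter $\delta$, no bound involving $\|R\|$, and no appeal to unproved ``standard facts,'' at the cost of manipulating the off-diagonal blocks of the LMI by hand rather than reducing to a pure Lyapunov inequality.
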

 \begin{proof}
 	First assume that $Q \leq 0$. Given $\delta > 0$, apply \eqref{eq:feedbackcontrol} with $F = -\delta S$. From the upper left block of the resulting inequality $\Lambda_\sim(P) \leq 0$,
 	\[
 	(A-\delta BS)^*P + P(A-\delta BS)\leq  -S^*(2\delta I - \delta^2 R)S.
 	\]
 	Take any $\delta \in (0,2/\|R\|)$, and note that $(A-\delta B {S}, S^*(2\delta I - \delta^2 R)S)$ is observable. Since the right-hand side is negative semidefinite, standard facts about the Lyapunov equation imply that $\det P \neq 0$. In general, first apply the feedback \eqref{eq:feedbackcontrol} with $F = KS$. The previous argument applies provided the pair $(A + BKS, (I+RK)S)$
 	is observable, which is true if, e.g., $I+RK$ is invertible.
 \end{proof}
 
\subsection{Stability criteria}

 \label{subsect:expstability} In this section we record two frequency conditions for exponential stability which are analogues of the classical circle and Popov criteria. First we record a version of the circle criterion that applies to systems of the form $\dot z = Az + B\phi(Cz)$; the proof is standard, and is essentially unaffected by the inclusion of the exponential rate $r$. Let 
\begin{equation} \label{eq:transferfunction}
H(s) = C(s-A)^{-1}B
\end{equation}
denote the transfer matrix of the associated linear block. Also define
\begin{equation} \label{eq:circlepopovfunction}
\Phi(\eta,\zeta) = H(\eta)^* + H(\zeta) - 2L^{-1}I
\end{equation}
for use in the next lemma.

\begin{lemm} \label{lemm:circle}
	Let $r \geq 0$, and let $\phi : \KK^n\rightarrow \KK^n$ belong to the sector $[0,L]$, where $0 < L \leq \infty$. Suppose that the following conditions hold.
	\begin{enumerate} \itemsep6pt 
		\item $B \neq 0$, 
		\item  $A + k BC + rI$ is Hurwitz for some $k \in [0,L]$, 
		\item $\det \Phi(\eta,\zeta)$ is not identically zero, where $\Phi(\eta,\zeta)$ is given by \eqref{eq:circlepopovfunction}.
	\end{enumerate}
	If the frequency inequality
		\begin{equation} \label{eq:circleFDI}
	\Re H(i\omega -r) \leq L^{-1}I
	\end{equation}
	holds whenever $\omega \in \RR$ and $\det(A - (i\omega - r)I) \neq 0$, then the system $\dot z = Az + B\phi(Cz)$ is $r$-exponentially stable. 
\end{lemm}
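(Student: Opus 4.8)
First I would remove the exponential weight by setting $y(t) = e^{rt}z(t)$. Then $z$ solves $\dot z = Az + B\phi(Cz)$ on an interval if and only if $y$ solves
\[
\dot y = A_r y + B\,\psi(t, Cy), \qquad A_r := A + rI, \quad \psi(t,w) := e^{rt}\phi\bigl(e^{-rt}w\bigr),
\]
with $\psi$ jointly continuous in $(t,w)$. A one-line computation from \eqref{eq:sectorbounded1}, in which the factor $e^{2rt}$ factors out, shows that $w \mapsto \psi(t,w)$ again lies in the sector $[0,L]$ for every fixed $t$, and $\psi(t,0) = 0$. Since $|z(t)| = e^{-rt}|y(t)|$ and $|y(0)| = |z(0)|$, it suffices to produce a constant $c > 0$, depending only on the data, with $|y(t)| \le c\,|y(0)|$ for every solution $y$ of the weighted equation and every $t$ in its interval of existence; then \eqref{eq:lureproblem} is $r$-exponentially stable with $\kappa(\rho) = c\rho$, and global existence follows a posteriori.

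To get this bound I would invoke hyperstability of the linear block. Note $C(sI - A_r)^{-1}B = H(s-r)$, and that the sector membership of $\psi$ says exactly that the supply rate with $Q = 0$, $S = -C$, $R = 2L^{-1}I$, that is
\[
\sigma(x,u) = 2L^{-1}|u|^2 - 2\Re\langle Cx, u\rangle,
\]
satisfies $\sigma\bigl(y(t),\psi(t,Cy(t))\bigr) \le 0$ along the weighted loop. I would then verify the hypotheses of Proposition \ref{prop:popovhyperstability2} for $(A_r,B,M)$. Condition (1) gives $B \neq 0$. Minimal stability follows from Lemma \ref{lemm:minimal} with the static feedback $F = kC$, where $k \in [0,L]$ is the constant of condition (2): then $A_r + BF = A + kBC + rI$ is Hurwitz, and
\[
Q + 2\Re S^*F + F^*RF = 2k\bigl(k/L - 1\bigr)\,C^*C \le 0
\]
because $0 \le k \le L$ (reading $1/L = 0$ when $L = \infty$). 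A direct computation of the Popov function of $(A_r,B,M)$ gives $\Pi(-i\omega,i\omega) = 2\bigl(L^{-1}I - \Re H(i\omega - r)\bigr)$, which is positive semidefinite precisely by the frequency inequality \eqref{eq:circleFDI}; hence \eqref{eq:KYPFDI} holds. Finally, after the translation $s \mapsto s-r$ the matrix $\Phi$ of \eqref{eq:circlepopovfunction} is, up to a harmless sign and reparametrization, the Popov function of $(A_r,B,M)$, so condition (3) is exactly the nondegeneracy hypothesis $\det \Pi(\eta,\zeta) \not\equiv 0$ of Proposition \ref{prop:popovhyperstability2}.

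Proposition \ref{prop:popovhyperstability2} now supplies $c > 0$ with $|y(t)| \le c\bigl(|y(0)| + \beta\bigr)$ for all $t \ge 0$ whenever $\dot y = A_r y + Bu$ and $E(y,u,0,t) \le \beta^2$ for all $t \ge 0$. I would apply this with $u(t) = \psi(t,Cy(t))$ and $\beta = 0$: along the loop $E(y,u,0,t) \le 0$ by the sector condition, and since $Q = 0$ one may extend $u$ by zero past any finite time without disturbing $E(y,u,0,t) \le 0$, so the defining condition holds for all $t \ge 0$ on each subinterval of the maximal interval of existence. This yields $|y(t)| \le c\,|y(0)|$, hence $|z(t)| \le c\,e^{-rt}|z(0)|$.

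The translation to the weighted system, the choice of supply rate, and the sign bookkeeping that aligns $\sigma$, $\Pi$ and \eqref{eq:circleFDI} are routine. The step carrying the weight of the hypotheses is the verification of minimal stability: this is exactly where condition (2) enters, and the key point is that the single feedback $F = kC$ simultaneously makes $A_r$ Hurwitz and renders $Q + 2\Re S^*F + F^*RF$ negative semidefinite, which is what lets the classical circle criterion go through Proposition \ref{prop:popovhyperstability2} in the present non-minimal, non-strict, $r$-weighted setting.
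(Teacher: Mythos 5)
Your argument is correct and coincides with the paper's own proof: both pass to the $e^{rt}$-weighted system $\dot x=(A+rI)x+Bu$ with the circle-type supply rate $-\Re\langle u,Cx\rangle+L^{-1}|u|^2$ (yours is just twice this, which is immaterial), establish minimal stability through the feedback $kC$ coming from hypothesis (2), identify the Popov function with $-\Phi(\bar\eta-r,\zeta-r)$ so that \eqref{eq:circleFDI} gives \eqref{eq:KYPFDI} and hypothesis (3) gives the nondegeneracy, and then invoke Proposition \ref{prop:popovhyperstability2} together with the sector bound $E\leq 0$ along the loop. Your explicit check that $\psi(t,\cdot)$ stays in the sector $[0,L]$ and the extension-by-zero remark only make explicit points the paper leaves implicit, so no gap remains.
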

\begin{proof}
Given $r \geq 0$, consider the LTI system
\begin{equation}  \label{eq:LTIweighted}
\begin{aligned}
\dot x &= (A+rI)x + Bu, \\
y &= Cx.
\end{aligned}
\end{equation} 
Define the supply rate $\sigma(x,u) = - \Re \langle u,y\rangle +L^{-1}|u|^2$,
in which case
\[
M = \begin{bmatrix}
Q & S^* \\
S & R
\end{bmatrix}  = 
\tfrac{1}{2}\begin{bmatrix}
0 & -C^* \\
-C & 2 L^{-1}I
\end{bmatrix}.
\]	
According to Lemma \ref{lemm:minimal}, the Hurwitz assumption on $A + kBC + rI$ implies that $(A+rI, B, M)$ is minimally stable: since $k \in [0,L]$, the control $u = kCx$ satisfies $E(x,u,0,t) \leq 0$. The corresponding Popov function is 
\begin{equation}
\Pi(\eta, \zeta) = -\tfrac{1}{2} \Phi(\bar \eta - r, \zeta -r).
\end{equation}
If $\det \Phi(\eta,\zeta)$ does not vanish identically, then the hypotheses of Proposition \ref{prop:popovhyperstability2} are satisfied, and hence $(A+rI,B,M)$ is hyperstable. 

Now suppose that $\dot z = Az + B\phi(Cz)$. If we set $x = e^{rt}z$ and $u = e^{rt}\phi(e^{-rt}y)$, then the pair $(x,u)$ satisfies $\dot x = (A+rI)x + Bu$. Furthermore, by the sector condition, $(x,u)$ satisfies
\[
E(x,u,0,t) \leq 0 \text{ for all $t \geq 0$}.
\]
Since $(A+rI,B,M)$ is hyperstable, there exists $c > 0$ such that 
\[
e^{rt}|z(t)| \leq  c |z(0)|
\] 
whenever $\dot z = Az + B\phi(Cz)$.
\end{proof}

 Next, suppose that $\KK = \RR$. We give a Popov-type criterion for the exponential stability of systems $\dot z = Az + B\nabla f(Cz)$. This extends a result of Bliman \cite{bliman2002absolute} albeit under slightly different hypotheses. When $f$ is quasi-strongly convex, the criterion presented below is potentially less conservative than the one in \cite{bliman2002absolute}, and applies even if $\nabla f$ belongs to an infinite sector (which is useful for the applications in \S \ref{sect:dissipativehamiltonian}). Set
\begin{equation}
\begin{multlined} \label{eq:popovpopovfunction}
\Phi(\eta,\zeta) = (\lambda +  \mu\bar \eta+ \nu(\bar \eta + 2r)  )H(\eta)^* + (\lambda + \mu \zeta+  \nu(\zeta+2r))H(\zeta) \\ + 2\mu rLH(\eta)^*H(\zeta) - 2 \lambda L^{-1} I ,
\end{multlined}
\end{equation}
where $\lambda,\, \mu,\, \nu \in \RR$ are parameters and $H(s)$ is given by \eqref{eq:transferfunction}. Bliman considered the case $\nu = 0$, but where $f$ is not necessarily quasi-strongly convex.

\begin{lemm} \label{lemm:popov}
	Let $r >  0$, and let $f : \RR^n \rightarrow \RR$ be such that $\nabla f$ belongs to the sector $[0,L]$, where $0 < L \leq \infty$. Let $\lambda, \, \mu, \, \nu \geq 0$, and suppose that the following conditions are satisfied.
\begin{enumerate} \itemsep6pt 
	\item $B \neq 0$,
	\item $A + k BC + r I$ is Hurwitz for some $k \in [0,L]$,
	\item $\det \Phi(\eta,\zeta)$ is not identically zero, where $\Phi(\eta,\zeta)$ is given by \eqref{eq:popovpopovfunction}, 
	\item $\mu = 0$ if $L = \infty$, 
    \item $\nu =0$ if $f$ is not $0$-strongly quasi-convex.
\end{enumerate}
If the frequency inequality
	\begin{equation} \label{eq:popovFDI}
	\begin{multlined} 
	\Re\left[ (\lambda + \mu(i\omega-r) +\nu(i\omega+r)H(i\omega-r) \right]  + \mu Lr H(i\omega-r)^*H(i\omega-r)  \\ \leq \lambda L^{-1} I
	\end{multlined} 
	\end{equation}
	holds whenever $\omega \in \RR$ and $\det(A - (i\omega-r)I) \neq 0$, then $\dot z = Az + B\nabla f(Cz)$ is $r$-exponentially stable.
\end{lemm}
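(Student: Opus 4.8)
The plan is to run the proof of Lemma~\ref{lemm:circle} with the plain sector supply rate replaced by a Popov-type one that additionally involves the derivative of the output. Fix $r>0$ and pass to the weighted LTI system $\dot x = (A+rI)x + Bu$, $y=Cx$. Since $\dot y = C(A+rI)x + CBu$ is affine-linear in $(x,u)$, any quantity assembled from $x,u,y,\dot y$ remains a quadratic form in $(x,u)$, so the theory of \S\ref{sect:expcont} applies unchanged. The supply rate to use is the one attached to the exponentially weighted Lur'e--Postnikov candidate $V(z)=\langle Pz,z\rangle + 2(\mu+\nu)\big(f(Cz)-f(0)\big)$; up to normalization one may take
\[
\sigma(x,u) = -2\mu rL\,|Cx|^2 - 2\Re\langle Sx,u\rangle + \langle Ru,u\rangle, \quad S = (\mu+\nu)CA + (2\nu r+\lambda)C, \quad R = 2\lambda L^{-1}I - 2(\mu+\nu)\Re(CB),
\]
the $|Cx|^2$-term dropped when $L=\infty$ (hypothesis~(4)). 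Using $CA(sI-A-rI)^{-1}B = (s-r)H(s-r) - CB$, a direct computation of the Popov function on the imaginary axis identifies \eqref{eq:KYPFDI} for the weighted triple with the frequency inequality \eqref{eq:popovFDI}; the coefficients $\mu\bar\eta$, $\nu(\bar\eta+2r)$, $2\mu rL\,H^*H$, $-2\lambda L^{-1}I$ in \eqref{eq:popovpopovfunction} arise respectively from the $\mu CA$, $\nu CA+2\nu rC$, $-2\mu rL\,C^*C$ and $2\lambda L^{-1}I$ pieces above.

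Next, the weighted triple is shown to be hyperstable. Its state-weight is $Q = -2\mu rL\,C^*C \le 0$, and hypothesis~(2) yields minimal stability: as in Lemma~\ref{lemm:circle} one uses the feedback $u = kCx$, but because here $Q\neq 0$ and $\sigma(x,kCx)$ involves the indefinite term $\Re\langle CAx,Cx\rangle$, one first replaces $\sigma$ by an equivalent supply rate $\sigma + \tfrac{d}{dt}\langle P_0 x,x\rangle$ — which changes neither \eqref{eq:popovFDI} nor hyperstability — where $P_0$ solves a Lyapunov equation for the Hurwitz matrix $A+kBC+rI$; for this modified rate the hypotheses of Lemma~\ref{lemm:minimal} hold with $F=kC$. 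Hypotheses~(1) and~(3) then let us apply Proposition~\ref{prop:popovhyperstability2}, giving hyperstability of the weighted block.

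Finally one estimates the energy along the substituted trajectory $x=e^{rt}z$, $u=e^{rt}\nabla f(Cz)$. With $w=Cz$, $\phi=\nabla f(w)$, the terms involving $\Re(CB)$ cancel, $\langle\phi,CAz\rangle = \langle\phi,\dot w\rangle - \langle\Re(CB)\phi,\phi\rangle$, and one is left with
\[
e^{-2rt}\sigma(x,u) = 2\lambda\big(L^{-1}|\phi|^2 - \langle\phi,w\rangle\big) - 2\mu rL\,|w|^2 - 4\nu r\langle\phi,w\rangle - 2(\mu+\nu)\langle\phi,\dot w\rangle .
\]
Since $\langle\phi,\dot w\rangle = \tfrac{d}{dt}f(w)$, integrating over $[0,t]$ gives: the first term is $\le 0$ by the sector condition; the last produces the boundary term $2(\mu+\nu)\big(f(w(0))-f(0)\big)$, a nonpositive boundary term $-2(\mu+\nu)e^{2rt}\big(f(w(t))-f(0)\big)$ (note $\nabla f\in[0,\infty]$ forces $f\ge f(0)$ along rays), and an integral $4(\mu+\nu)r\int_0^t e^{2rs}\big(f(w)-f(0)\big)\,ds$ which is exactly absorbed by the second and third terms via $f(w)-f(0)\le\tfrac{L}{2}|w|^2$ (sector, weight $4\mu r$, whence $L<\infty$ is needed when $\mu>0$) and $f(w)-f(0)\le\langle\nabla f(w),w\rangle$ (quasi-strong convexity \eqref{eq:quasiconvex} with $m=0$, weight $4\nu r$, whence hypothesis~(5)). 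Hence $E(x,u,0,t)\le 2(\mu+\nu)\big(f(Cz(0))-f(0)\big) =: \beta^2$ for all $t$, and hyperstability gives $e^{rt}|z(t)|\le c(|z(0)|+\beta)$; as $\rho\mapsto\sup_{|\zeta|\le\rho}\big(f(C\zeta)-f(0)\big)$ is finite, nondecreasing and vanishes at $0$, this is $r$-exponential stability with an admissible $\kappa$.

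The crux is producing one supply rate that simultaneously has \eqref{eq:popovFDI} as its frequency inequality, has $Q\le 0$ so that minimal stability is reachable from hypothesis~(2), and integrates along the nonlinear feedback to a quantity controlled by $f(Cz(0))-f(0)$ alone; the $|Cx|^2$-term (originating from the sector bound on $f$) and the split of the $f-f(0)$ bound between the $\mu$- and $\nu$-multipliers are what reconcile these. The remaining delicate point is the minimal-stability check when $\mu+\nu>0$, where the plain feedback $u=kCx$ is not immediately dissipative and the total-derivative adjustment above is required.
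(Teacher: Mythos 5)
Your overall architecture is the paper's: the same $e^{rt}$-weighted system, the same combined Popov-type supply rate (yours is twice the paper's $\lambda\sigma_0+\mu\sigma_1+\nu\sigma_2$), the same identification of \eqref{eq:popovFDI} with \eqref{eq:KYPFDI}, and your final energy estimate along $x=e^{rt}z$, $u=e^{rt}\nabla f(e^{-rt}y)$ (integration by parts of $\langle \phi,\dot w\rangle$, absorbing $4\mu r f$ via $f\leq \tfrac{L}{2}|\cdot|^2$ and $4\nu r f$ via quasi-strong convexity) is exactly the paper's. The gap is the minimal-stability step. You correctly note that $u=kCx$ does not make the supply rate pointwise nonpositive because of the indefinite $\langle u, CAx\rangle$ term, but your fix --- replacing $\sigma$ by $\sigma+\tfrac{d}{dt}\langle P_0x,x\rangle$ with $P_0$ a Lyapunov solution for the Hurwitz matrix $A+kBC+rI$ --- does not deliver what you need. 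Adding this total derivative leaves \eqref{eq:popovFDI} unchanged (the extra term in the Popov function carries the factor $\zeta+\eta$, which vanishes on the anti-diagonal), but it shifts the energy functional by the state boundary terms $\langle P_0x(t),x(t)\rangle-\langle P_0x(0),x(0)\rangle$. Hyperstability is quantified over trajectories obeying a running energy bound, so hyperstability of the modified triple neither implies nor is implied by that of the original one without a priori control of $|x(t)|$ --- which is what is being proved. Concretely, your closing estimate bounds $E(x,u,0,t)$ for the \emph{original} supply rate, while the hyperstability inequality you would obtain from Proposition \ref{prop:popovhyperstability2} concerns $E'(x,u,0,t)=E(x,u,0,t)+\langle P_0x(t),x(t)\rangle-\langle P_0x(0),x(0)\rangle$; since $P_0$ solves a Lyapunov equation with indefinite right-hand side, it has no sign, and the transfer cannot be made (a bootstrap also fails, since the constants do not close). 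A secondary issue: hypothesis (3) guarantees $\det\Pi\not\equiv 0$ for the original supply rate; the modified triple has a different two-variable Popov function, whose determinant condition would need separate justification.

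The paper closes this step with Popov's auxiliary-control device, which you need here (or an equivalent): keep the original supply rate, solve $\dot x=(A+rI+kBC)x+B\rho$ with $x(0)=x^0$, and set $u=kCx+\rho$, where $\rho$ solves $(\mu+\nu)\dot\rho+(\lambda+r(\mu+\nu))\rho=0$ with $\rho(0)=-kCx^0$. Writing $y=(u-\rho)/k$ and $\dot y=(\dot u-\dot\rho)/k$, each $\sigma_j$ is bounded by terms in $\langle u,\rho\rangle$, $\langle u,\dot\rho\rangle$ and $-k^{-1}\langle u,\dot u\rangle$; the choice of $\rho$ kills the first two, and the last integrates to $\tfrac{1}{2}k^{-1}\left(|u(0)|^2-|u(t)|^2\right)\leq 0$ because $\rho(0)=-kCx^0$ forces $u(0)=0$. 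Since $\rho\in L^2$ and $A+rI+kBC$ is Hurwitz, this gives $E(x,u,0,t)\leq 0$ for all $t$ with $x(t)\rightarrow 0$, i.e.\ minimal stability of the \emph{unmodified} triple; with that in place, the rest of your argument goes through verbatim.
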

\begin{proof}
	As in the proof of Lemma \ref{lemm:circle}, we consider the LTI system $\dot x = (A+ rI)x + Bu$ and set $y = Cx$. By a slight abuse of notation, we also introduce the additional output
	\[
	\dot y = C(A+rI)x + CBu.
	\]
	If $\dot x = (A+rI)x + Bu$, then $\dot y$ defined in this way is indeed the time derivative of the function $y = Cx$. Define the following supply rates:
	\begin{align*}
	\sigma_0(x,u) &= - \langle u, y \rangle + L^{-1}|u|^2  \\
    \sigma_1(x,u) &= - \langle u, \dot y - ry \rangle -rL|y|^2 , \\
    \sigma_2(x,u) &= - \langle u, \dot y + ry\rangle.
	\end{align*}
 The matrices defining these quadratic forms are
  	\begin{align*}
  	M_0 &= -\tfrac{1}{2}\begin{bmatrix}
  	0 & C^* \\
  	C & -2L^{-1} I 
  	\end{bmatrix},\\ 
  	M_1 &= - \tfrac{1}{2}\begin{bmatrix}
  	rL C^*C & A^*C^* \\
  	CA & 2 \Re CB
  	\end{bmatrix}, \\  	
  	M_2 &= -\tfrac{1}{2}\begin{bmatrix}
  	0 & (A+2rI)^*C^* \\
  	C(A+2rI) & 2 \Re CB
  	\end{bmatrix}.
  	\end{align*}
  	  	Finally, set $\sigma = \lambda \sigma_0 + \mu \sigma_1 + \nu \sigma_2$ and $M = \lambda M_0 + \mu M_1 + \nu M_2$. We show that $(A+rI, B, M)$ is hyperstable using Proposition \ref{prop:popovhyperstability2}. The Popov function for this system is
  	  	\[
  	  	\Pi(\eta,\zeta) = -\tfrac{1}{2} \Phi(\bar \eta -r, \zeta-r),
  	  	\]
  	  	whose determinant is not identically zero by hypothesis. Furthermore, \eqref{eq:popovFDI} implies that \eqref{eq:KYPFDI} holds. It remains to show that $(A+rI,B,M)$ is minimally stable. For this we adapt another argument due to Popov \cite[\S 25]{popov1973hyperstability}.
  	  	
  	  	Given $x^0 \in \RR^m$, we seek a pair $(x,u)$ satisfying the conditions of minimal stability by first solving the system
  	  	\[
  	  	\dot x = (A+rI + kBC)x + B\rho, \quad x(0) = x^0
  	  	\]
  	  	with an (as of yet) unspecified function $\rho$, and then setting $u = kCx + \rho$. If $\rho \in L^2(\RR_+; \RR^n)$, then $x(t) \rightarrow 0$ as $t\rightarrow \infty$ since $A+rI + kBC$ is Hurwitz. 

        We can assume that $k > 0$, otherwise it suffices to choose $u = 0$ as the control. If $\rho$ has a locally square-integrable derivative, then $\sigma(x,u)$ can be written entirely in terms of $(u, \rho)$ by replacing $y$ with $(u- \rho)/k$ and $\dot y$ with $(\dot u -\dot \rho)/k$. Thus
  	    \[
  	    \sigma_0(x,u) = k^{-1}\langle u, \rho \rangle + (L^{-1} - k^{-1})|u|^2 \leq k^{-1} \langle u, \rho \rangle. 
  	    \]
  	    Similarly,
  	    \begin{align*}
  	    \sigma_1(x,u) &= k^{-1}\langle u, \dot \rho  - \dot u \rangle + rk^{-1}\langle u, u-\rho\rangle - rLk^{-2}|u-\rho|^2 \\
  	    &\leq k^{-1} \langle u, \dot \rho -\dot u\rangle + rk^{-1}\langle u, \rho \rangle,
  	    \end{align*}
       and 
        \begin{align*}
        \sigma_2(x,u) &= k^{-1}\langle u, \dot \rho - \dot u\rangle + rk^{-1}\langle u, \rho - u \rangle  \\ 
        &\leq k^{-1}\langle u,\dot \rho -\dot u\rangle + rk^{-1}\langle u,\rho \rangle.
        \end{align*}
     Altogether then,
     \begin{equation} \label{eq:popovsupplybound}
     \sigma(x,u) \leq  k^{-1}(\mu+\nu) \langle u, \dot \rho \rangle +  k^{-1}(\lambda + r(\mu + \nu))\langle u, \rho \rangle - k^{-1}\langle u,\dot u\rangle.
     \end{equation}
     We can assume that $\mu + \nu > 0$, otherwise we are in the situation of Lemma \ref{lemm:circle} where it suffices to let $\rho = 0$. The first two terms on the right-hand side of \eqref{eq:popovsupplybound} vanish if we choose $\rho$ to be any solution of
     \[
     (\mu + \nu)\dot \rho + (\lambda + r(\mu+\nu))\rho = 0.
     \]
     Since $r > 0$, any such solution is square-integrable. With this choice,
     \[
     E(x,u,0,t) \leq -k^{-1} \int_0^t \langle u(s), \dot u(s) \rangle \, ds \leq \tfrac{1}{2} k^{-1} |u(0)|^2.  
     \]
     By specifying the initial condition $\rho(0) = -kCx^0$, we arrange that $u(0)= 0$, and hence $E(x,u,0,t) \leq 0$ as desired.
  
  Now suppose that $\dot z = Az + B\phi(Cz)$. As in the proof of Lemma \ref{lemm:circle}, we set $x = e^{rt}z$ and $u = e^{rt}\phi(e^{-rt}y)$. The sector condition implies that 
  	\[
  	E_0(x,u,0,t) \leq 0 \text{ for all $t \geq 0$},
  	\]
  	 where the left-hand side is defined in the obvious way. Next, note that by replacing the potential $f$ with $f - f(0)$, we can assume that $f(0) = 0$ and $f \geq 0$. Now compute
  	 \begin{equation} \label{eq:FTC}
  	  e^{2rt}f(e^{-rt} y(t)) - f(y(0)) = \int_0^t 2re^{2rs}f(e^{-rs}y(s)) + \langle  u(s), \dot y(s) - ry(s) \rangle \, ds.
  	 \end{equation}
 If $L < \infty$, then the sector condition and $f(0) = 0$ imply that $f(w) \leq (L/2)|w|^2$, which can be used to bound the first term under the integral in \eqref{eq:FTC}.  If $f$ is $0$-strongly quasi-convex, then we instead use $f(w) \leq \langle \nabla f(w), w \rangle$. In either case
  	 \[
  	 E_j(x,u,0,t) \leq f(y(0)), \quad j = 1,2.
  	 \]
  	 Combining these results,
  	  \[
  	  E(x,u,0,t) \leq (\mu + \nu)f(y(0)) \text{ for all $t \geq 0$}.
  	  \]
	 Since $(A + rI, B,M)$ is hyperstable, there exists $c>0$ such that
	 \[
	 e^{2rt} |z(t)|^2 \leq c(|z(0)|^2 + f(Cz(0)))
	 \] 
	 whenever $\dot z = Az + B\nabla f(Cz)$, which implies $r$-exponential stability.
\end{proof} 	

In the context of Lemma \ref{lemm:circle}, if $L< \infty$, then the $\det \Pi(\eta,\zeta)$ is not identically zero provided both $|\eta|$ and $|\zeta|$ are sufficiently large.  If the first Markov parameter $CB$ vanishes, then the same is true with regards to Lemma \ref{lemm:popov}, since it implies that any linear multiple of $H(s)$ is strictly proper.

If $(A,B)$ is controllable and the hypotheses of Proposition \ref{prop:popovhyperstability1} are satisfied in Lemma \ref{lemm:circle}, then there exists a quadratic Lyapunov function $V(z) = \langle Pz,z\rangle$ with $P > 0$ such that 
\[
\dot V\leq 2r V
\] 
along trajectories of $\dot z = Az + B\phi(Cz)$. Similarly, for Lemma \ref{lemm:popov} we obtain the existence a Lyapunov function $V(z) = \langle Pz, z\rangle + f(Cz) - f(0)$.

\section{Dissipative Hamiltonian systems} \label{sect:dissipativehamiltonian}

\subsection{Preliminaries} \label{subsect:dissipativeprelim}
 Let $x = (q,p)$ be coordinates on the phase space $\RR^{2d} = \RR^d \times \RR^d$, where $d\geq 1$. Given a $\mathcal{C}^1$ function $f : \RR^d \rightarrow \RR$, define the Hamiltonian
\[
H(x) = \tfrac{1}{2}|p|^2 + f(q) - f(0).
\]
By replacing $f$ with $f-f(0)$ we will assume that $f(0) = 0$. Assume $\nabla f$ belongs to a sector $[m,L]$ for some $0 < m < L \leq \infty$ (see \S \ref{subsect:applicationscontinuous}). Consider the system
\begin{equation*}
\dot x = (J-S)\nabla H(x),
\end{equation*}
where $J$ is the standard symplectic matrix,
and $S \in \RR^{2d\times 2d}$ is positive semidefinite. Partition
\[
S = \begin{bmatrix}
S_{11} & S_{12} \\
S_{21} & S_{22}
\end{bmatrix},
\]
where $S_{11},S_{22}$ are positive semidefinite and $S_{12}^* = S_{21}$. Rewrite this system in the Lur'e form
\begin{equation} \label{eq:loopshiftedequation}
\begin{aligned}
\dot x &= Ax + Bu, \\
u &= \nabla g (Cx),
\end{aligned}
\end{equation} 
where $g(q) = f(q) - (m/2)|q|^2$, and the matrices $A,B,C$ are given by
\[
A  = \begin{bmatrix}
-m S_{11} & I - S_{12} \\
-m(I  + S_{21}) & - S_{22}
\end{bmatrix}, \quad B = \begin{bmatrix}
-S_{11} \\
-I - S_{21}
\end{bmatrix}, \quad C = \begin{bmatrix}
I & 0 
\end{bmatrix}.
\]
Next, assume that $S_{12} = 0 = S_{21}$. In that case it is easy to see  $(A,B)$ is controllable and $(A,C)$ is observable. Finally, assume that $S_{11},S_{22}$ are multiples of the identity
\[
S_{11} = \tau I , \quad S_{22} = 2\sigma I,
\]
where $\sigma > 0$ and $\tau \geq 0$.

 From the resulting block structure, we can assume that $d=1$  as far as verifying the hypotheses of Lemmas \ref{lemm:circle} or \ref{lemm:popov} is concerned. Thus
\begin{equation} \label{eq:transferrunctionHO}
H(s) = \frac{1+2\sigma \tau +\tau s}{\det(sI-A)} = \frac{-(1+2\sigma \tau +\tau s)}
{s^2 + (2\sigma+ \tau m) s +m(1+2\sigma \tau)},
\end{equation}
where $H(s) = C(sI - A)^{-1}B$ is the scalar transfer function. Similarly, the Popov function in either Lemma \ref{lemm:circle} or \ref{lemm:popov} can be assumed scalar.
\subsection{The linear problem} 

 \label{subsect:marginalstabilitysector}
In order for the system \eqref{eq:H} to be $r$-exponentially stable when $\nabla f$ belongs to a sector $[m,L]$, it is necessary for
\begin{equation} \label{eq:marginallystable}
 A + (k-m)BC + rI = \begin{bmatrix}
r-k\tau  & 1 \\
-k & r-2\sigma 
\end{bmatrix}
\end{equation}
to be marginally stable whenever $k \in [m,L]$. By a marginally stable matrix we mean one whose eigenvalues lie in the closed left half-plane, and whose purely imaginary eigenvalues are semisimple. The characteristic polynomial of \eqref{eq:marginallystable} is
\begin{equation} \label{eq:charpoly}
\chi(s) =  s^2 + (2\sigma -2r + k\tau)s+ k+r^2 -2r\sigma + (2\sigma - r)k\tau.
\end{equation}
For the roots of $\chi$ to lie in the closed left half-plane, the linear and constant terms of $\chi$  must be nonnegative. 
First we record the observations needed to analyze \eqref{eq:H0}.

\begin{lemm} \label{lemm:tau=0linearconstraint} If $\tau = 0$, then \eqref{eq:marginallystable} is marginally stable for every $k\geq m$ if and only if 
	\[
	r \leq \sigma, \quad m \geq 2r\sigma -r^2,
	\] and at least one of these inequalities is strict. Furthermore, the following hold.
	\begin{enumerate} \itemsep6pt 
		\item 	 If $r < \sigma$ and $m > 2r\sigma -r^2$, then $A+rI$ is Hurwitz.
		\item If $r < \sigma$ and $m  = 2r \sigma -r^2$, then $A+rI$ has distinct eigenvalues $0$ and $2(r-\sigma)$, and $A+\delta BC + rI$ is Hurwitz for any $\delta > 0$.
	\end{enumerate}
\end{lemm}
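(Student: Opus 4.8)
The plan is to reduce everything to a Routh--Hurwitz analysis of the monic quadratic $\chi$ from \eqref{eq:charpoly} specialized to $\tau = 0$. Writing $\chi(s) = s^2 + bs + c$ with
\[
b = 2\sigma - 2r, \qquad c = k + r^2 - 2r\sigma,
\]
I would first record the elementary characterization: a $2\times 2$ real matrix whose characteristic polynomial is $\chi$ and whose $(1,2)$ entry is nonzero is marginally stable (eigenvalues in the closed left half-plane, purely imaginary eigenvalues semisimple) if and only if $b \geq 0$, $c \geq 0$, and $(b,c) \neq (0,0)$. This is a short case check: if $b < 0$ the eigenvalues have positive real-part sum; if $c < 0$ the eigenvalues are real of opposite sign; if $b > 0$ and $c \geq 0$ they lie in the closed left half-plane, with a simple eigenvalue at $0$ exactly when $c = 0$; if $b = 0 < c$ they are $\pm i\sqrt{c}$, hence distinct and semisimple; and if $b = c = 0$ the matrix \eqref{eq:marginallystable} has a double eigenvalue at the origin but, having a nonzero $(1,2)$ entry, is a single Jordan block, so that eigenvalue is not semisimple.

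Next I would quantify over $k \geq m$. The coefficient $b$ is independent of $k$, so $b \geq 0$ is just $r \leq \sigma$. The coefficient $c$ is strictly increasing in $k$, so $c \geq 0$ for all $k \geq m$ is equivalent to its value at $k = m$ being nonnegative, i.e. $m \geq 2r\sigma - r^2$. Finally, under these two conditions the degenerate case $b = c = 0$ occurs for some admissible $k$ precisely when $r = \sigma$ and $c|_{k=\sigma^2} = 0$ with $\sigma^2 \geq m$; since $m \geq 2r\sigma - r^2 = \sigma^2$ already, this reduces to $r = \sigma$ and $m = \sigma^2 = 2r\sigma - r^2$, i.e. to both displayed inequalities being equalities. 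Excluding it is exactly the requirement that at least one of the two inequalities be strict, which establishes the stated equivalence.

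The two additional assertions follow by evaluating at $k = m$, noting that \eqref{eq:marginallystable} with $k = m$ is precisely $A + rI$, and at $k = m + \delta$, which is $A + \delta BC + rI$. If $r < \sigma$ and $m > 2r\sigma - r^2$ then $b > 0$ and $c > 0$ at $k = m$, so $A + rI$ is Hurwitz. If $r < \sigma$ and $m = 2r\sigma - r^2$ then $b = 2\sigma - 2r > 0$ and $c = 0$, so $\chi(s) = s\,(s + 2\sigma - 2r)$ and $A + rI$ has the distinct eigenvalues $0$ and $2(r-\sigma)$; taking $\delta > 0$ replaces $c$ by $\delta > 0$ while leaving $b > 0$, so $A + \delta BC + rI$ is Hurwitz.

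I do not anticipate a genuine obstacle: the only point needing a little care is the semisimplicity clause in the definition of marginal stability, which forces one to distinguish the harmless case $b = 0 < c$ (two distinct imaginary eigenvalues) from the excluded case $b = c = 0$ (a non-semisimple double eigenvalue at the origin, using that the matrix is not a scalar multiple of the identity); the remainder is the standard sign analysis of a real quadratic.
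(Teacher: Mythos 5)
Your proposal is correct and follows essentially the same route as the paper, which simply observes that the lemma "follows directly from" the characteristic polynomial \eqref{eq:charpoly}; you carry out that Routh--Hurwitz sign analysis of $s^2+(2\sigma-2r)s+k+r^2-2r\sigma$ explicitly, including the correct treatment of the non-semisimple case $b=c=0$ via the nonzero $(1,2)$ entry.
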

\begin{proof}
	This follows directly from \eqref{eq:charpoly}.
\end{proof}

If $r = \sigma$ and $m > 2r\sigma -r^2$, then $A+rI$ has distinct eigenvalues 
$\pm i (m-\sigma^2)^{1/2}$,
but $A+\delta BC + rI$ is not Hurwitz for any $\delta \geq 0$. 

\begin{lemm} \label{lemm:tau>0constraint}
If $\tau > 0$ and \eqref{eq:marginallystable} is marginally stable for each $k \geq m$, then
\begin{equation} \label{eq:tau>0constraint}
r \leq \min(2\sigma + \tau^{-1}, \sigma + m \tau /2), \quad m\tau( 2 \sigma +\tau^{-1} - r)\geq 2r\sigma - r^2.
\end{equation}
If $r = 2\sigma + \tau^{-1} = \sigma + \tau m /2$, then $A+ r I$ has distinct imaginary eigenvalues, and $A+ \delta BC + rI$ is Hurwitz for any $\delta > 0$.
\end{lemm}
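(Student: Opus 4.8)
The plan is to apply the Routh--Hurwitz criterion in degree two to the characteristic polynomial $\chi$ in \eqref{eq:charpoly}, regarded as a function of $k \geq m$ with $r,\sigma,\tau$ held fixed. As noted just before Lemma \ref{lemm:tau=0linearconstraint}, if the roots of $\chi$ lie in the closed left half-plane then its linear and constant coefficients are nonnegative; since marginal stability of \eqref{eq:marginallystable} entails this for every $k \geq m$, it suffices to analyze the two affine functions
\[
b(k) = 2\sigma - 2r + k\tau, \qquad c(k) = k\bigl(1 + (2\sigma - r)\tau\bigr) + r^2 - 2r\sigma,
\]
imposing $b(k) \geq 0$ and $c(k) \geq 0$ for all $k \geq m$.

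First I would treat $b$. Since $\tau > 0$, the function $b$ is strictly increasing, so $b(k) \geq 0$ for all $k \geq m$ is equivalent to $b(m) \geq 0$, i.e. $r \leq \sigma + m\tau/2$. Next consider $c$: if the slope $1 + (2\sigma - r)\tau$ were negative, then $c(k) \to -\infty$ as $k \to \infty$, contradicting $c(k) \geq 0$; hence $1 + (2\sigma - r)\tau \geq 0$, which is exactly $r \leq 2\sigma + \tau^{-1}$. Given this, $c$ is non-decreasing, so $c(k) \geq 0$ for all $k \geq m$ reduces to $c(m) \geq 0$; rewriting $c(m) = m\tau\bigl(2\sigma + \tau^{-1} - r\bigr) + r^2 - 2r\sigma$ turns this into $m\tau(2\sigma + \tau^{-1} - r) \geq 2r\sigma - r^2$. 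Combining this with the bound coming from $b$ yields exactly \eqref{eq:tau>0constraint}.

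For the boundary statement, set $r = 2\sigma + \tau^{-1} = \sigma + \tau m/2$. The first equality makes the slope $1 + (2\sigma - r)\tau$ vanish, so $c(k)$ is the positive constant $r^2 - 2r\sigma = r(r - 2\sigma) = r\tau^{-1} > 0$, while the second equality gives $b(m) = 0$. Since $A + rI$ is the matrix in \eqref{eq:marginallystable} with $k = m$, its characteristic polynomial is $s^2 + r\tau^{-1}$, with distinct imaginary eigenvalues $\pm i\sqrt{r\tau^{-1}}$. For $\delta > 0$, the matrix $A + \delta BC + rI$ is \eqref{eq:marginallystable} with $k = m + \delta$; since $b$ has slope $\tau$ and $c$ is constant, its characteristic polynomial is $s^2 + \delta\tau\, s + r\tau^{-1}$, whose coefficients are both positive, hence it is Hurwitz.

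There is no genuine obstacle here: the argument is a routine sign analysis of two linear functions of $k$. The only points deserving attention are that marginal stability enters only through the necessary conditions $b(k), c(k) \geq 0$ (the semisimplicity clause plays no role in deriving the inequalities), and that $r^2 - 2r\sigma$ is strictly positive under the two equalities, which is precisely what ensures the boundary eigenvalues are genuinely distinct and nonzero rather than a defective double root at the origin.
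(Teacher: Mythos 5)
Your proof is correct and follows essentially the same route as the paper: both arguments read off the necessary sign conditions on the linear and constant coefficients of \eqref{eq:charpoly} at $k=m$ and as $k\to\infty$, and then check the boundary case $r = 2\sigma+\tau^{-1} = \sigma+\tau m/2$ directly, where the constant term equals $r\tau^{-1}>0$ and the linear term grows with slope $\tau$. If anything, your treatment of the Hurwitz step is slightly more precise than the paper's, which asserts that both coefficients are strictly increasing in $k$ even though at the boundary the constant coefficient is constant (but positive) in $k$.
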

\begin{proof}
	The constraints \eqref{eq:tau>0constraint} come from the roots of \eqref{eq:charpoly} for $k=m$ and in the limit as $k\rightarrow \infty$. If $r = \sigma + \tau m/2$, then the linear term in $\chi$ vanishes for $k = m$; if additionally $r = 2\sigma + \tau^{-1}$, then the constant term is $r^2 - 2r\sigma$ for $k = m$, which is strictly positive since 
	\[
	r^2 = (2\sigma + \tau^{-1})r > 2\sigma r.
	\] The Hurwitz statement follows because both the linear and constant terms in \eqref{eq:charpoly} are strictly increasing with $k$ when $\tau > 0$.
\end{proof}

Thus the optimal pair $(r_\star,\tau_\star)$ for the linear problem over the sector $[m,\infty]$ is obtained by solving $r = 2\sigma +\tau^{-1} = \sigma + \tau m /2$. The resulting values are
\begin{equation} \label{eq:optimalinfinite}
r_\star=\frac{3\sigma + \sqrt{2 m+\sigma^2} }{2}, \quad \tau_\star = \frac{\sigma + \sqrt{2 m+\sigma^2}}{m},
\end{equation}
as in the statement of Theorem \ref{theo:tau>0}.

\subsection{Proof of Theorem \ref{theo:H0finitesector}} \label{subsect:H0finitesectorproof}
Theorem \ref{theo:H0finitesector} is an application of Lemma \ref{lemm:circle}. Suppose that
\[
r < \sigma, \quad m \geq 2r\sigma - r^2. 
\]
Given $m < L \leq \infty$, set $l = L-m$. The relevant frequency inequality \eqref{eq:circleFDI} is
\begin{equation} \label{eq:H0finitesectorFDI}
\Re H(i\omega - r) \leq 1/l. 
\end{equation}
Let $\mathcal{F}$ denote the set of $l > 0$ such that \eqref{eq:H0finitesectorFDI} holds whenever $\omega \in \RR$ and $i\omega -r$ is not an eigenvalue of $A$. Our aim is to compute
\[
l_{\sup} = \sup \mathcal{F}.
\]
Later we will verify the remaining hypotheses of Lemma \ref{lemm:circle}.

After placing over the common denominator $l|\det(A + rI -i\omega I)|^2$, the inequality \eqref{eq:H0finitesectorFDI} is equivalent to
\begin{equation} \label{eq:H0finitesectorquadratic}
\omega^4 + \beta\omega^2 + \gamma \geq 0 \text{ for all }\omega \in \RR,
\end{equation}
where the coefficients are given by
\begin{align*}
\gamma &= (m - 2r\sigma +r^2)(l + m - 2r\sigma +r^2), \\
\beta &= 2(\sigma^2-m) + 2(\sigma - r)^2 - l.
\end{align*}
Given that the polynomial in \eqref{eq:H0finitesectorquadratic} is quadratic in $\omega^2$, define the following two feasibility sets:
\begin{gather}
\mathcal{F}_1 = \{l > 0: \gamma > 0 \text{ and } 4\gamma -\beta^2 \geq 0\}, \label{eq:Q1}\\
\mathcal{F}_2 = \{l>0: \gamma \geq 0 \text{ and } \beta \geq 0\}. \label{eq:Q2}
\end{gather}
These sets depend implicitly on $m,\, \sigma, \, r$, and $\mathcal{F} = \mathcal{F}_1 \cup \mathcal{F}_2$.
\begin{lemm} \label{lemm:H0finitesectoraux}
If $r < \sigma$ and $m \geq 2r\sigma -r^2$, then
\[
l_{\sup} =  4\left((\sigma-r)^2 + (\sigma-r)(m-2r\sigma + r^2)^{1/2} \right).
\]
\end{lemm}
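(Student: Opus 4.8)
The plan is to remove $m$ and $\sigma$ in favour of the two quantities $\alpha = m - 2r\sigma + r^2$ and $\delta = \sigma - r$, which by hypothesis satisfy $\alpha \geq 0$ and $\delta > 0$. Using the identity $\sigma^2 - m = \delta^2 - \alpha$, the coefficients in \eqref{eq:H0finitesectorquadratic} become
\[
\gamma = \alpha(l+\alpha), \qquad \beta = 4\delta^2 - 2\alpha - l .
\]
I would record this reduction first, as it makes both feasibility sets \eqref{eq:Q1}--\eqref{eq:Q2} transparent. For $\mathcal{F}_2$: since $\alpha \geq 0$ and $l > 0$, the condition $\gamma \geq 0$ is automatic, so $\mathcal{F}_2 = \{\, l > 0 : \beta \geq 0 \,\} = (0,\, 4\delta^2 - 2\alpha]$; this is nonempty exactly when $2\delta^2 > \alpha$, and in all cases $\sup \mathcal{F}_2 \leq 4\delta^2 - 2\alpha$.

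The heart of the argument is $\mathcal{F}_1$. There $\gamma > 0$ forces $\alpha > 0$, so assume $\alpha > 0$. A direct computation gives
\[
4\gamma - \beta^2 = -\,l^2 + 8\delta^2 l + 16\delta^2(\alpha - \delta^2),
\]
a concave quadratic in $l$ with discriminant $64\delta^2\alpha$, hence with roots $4\delta(\delta - \sqrt{\alpha})$ and $4\delta(\delta+\sqrt{\alpha})$. Consequently, when $\alpha > 0$,
\[
\mathcal{F}_1 = (0,\infty) \,\cap\, \bigl[\, 4\delta(\delta-\sqrt{\alpha}),\ 4\delta(\delta+\sqrt{\alpha})\,\bigr],
\]
which is nonempty and contains its right endpoint (there $4\gamma - \beta^2 = 0 \geq 0$ and $\gamma > 0$, $l > 0$); so $\sup \mathcal{F}_1 = 4\delta(\delta+\sqrt{\alpha})$, a maximum. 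When $\alpha = 0$ one has $\gamma = 0$, so $\mathcal{F}_1 = \emptyset$.

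It then remains to combine the two, using $\mathcal{F} = \mathcal{F}_1 \cup \mathcal{F}_2$. If $\alpha > 0$, then $4\delta(\delta+\sqrt{\alpha}) > 4\delta^2 \geq 4\delta^2 - 2\alpha \geq \sup\mathcal{F}_2$, so $l_{\sup} = \sup(\mathcal{F}_1 \cup \mathcal{F}_2) = 4\delta(\delta+\sqrt{\alpha})$. If $\alpha = 0$, then $\mathcal{F} = \mathcal{F}_2 = (0, 4\delta^2]$, whence $l_{\sup} = 4\delta^2 = 4\delta(\delta+\sqrt{\alpha})$. In either case $l_{\sup} = 4(\delta^2 + \delta\sqrt{\alpha})$, which is the asserted value once $\alpha = m - 2r\sigma + r^2$ and $\delta = \sigma - r$ are expanded. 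There is no real obstacle here: the only point needing care is the degenerate case $\alpha = 0$, i.e.\ $m = 2r\sigma - r^2$, in which $\mathcal{F}_1$ collapses and the supremum is contributed entirely by $\mathcal{F}_2$; the rest is the routine algebra of the identity for $4\gamma - \beta^2$ and its factorization.
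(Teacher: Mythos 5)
Your proof is correct and follows essentially the same route as the paper: the same split into the degenerate case $m=2r\sigma-r^2$ versus $m>2r\sigma-r^2$, the same concave quadratic $h=4\gamma-\beta^2$ with discriminant $64(\sigma-r)^2(m-2r\sigma+r^2)$, and the same conclusion that $\sup\mathcal{F}_1$ dominates $\sup\mathcal{F}_2$. Your substitution $\alpha=m-2r\sigma+r^2$, $\delta=\sigma-r$ and the explicit root comparison $4\delta(\delta+\sqrt{\alpha})\geq 4\delta^2-2\alpha$ are just a cleaner packaging of the paper's argument that $l_+$ exceeds any positive root of $\beta$.
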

\begin{proof}
	\begin{inparaenum}
 \item If $m = 2r\sigma -r^2$, then $\gamma = 0$. Thus \eqref{eq:H0finitesectorquadratic} holds if and only if $l \in \mathcal{F}_2$, namely $\beta \geq 0$. But $\beta \geq 0$ if and only if $l \leq 4(\sigma-r)^2$, so
 \[
 l_{\sup} = 4(\sigma-r)^2.
 \]
 
 \item If $m > 2r\sigma -r^2$, then $\gamma > 0$ for all $l \geq 0$. Define $h = 4\gamma -\beta^2$, which is a concave quadratic function of $l$:
 \[
 h(l) = -l^2+8 (\sigma -r)^2 l +16 (m-\sigma ^2) (\sigma -r )^2.
 \]
The discriminant of $h$ with respect to $l$ is
\[
\disc(h; l) = 64 (\sigma -r )^2 (m- 2r\sigma + r^2) > 0,
\]
so $h$ has two real roots $l_\pm$. Furthermore, $h'(0) = 8(\sigma -r)^2  > 0$, so $l_+ > 0$. Explicitly,
 \[
 l_+ = 4\left((\sigma-r)^2 + (\sigma-r)(m-2r\sigma + r^2)^{1/2} \right).
 \]
 Now $\beta$ is a decreasing function of $l$, and clearly $h > 0$ at a positive root of $\beta$ (if one exists) since $\gamma > 0$. Thus $l_+$ exceeds any positive root of $\beta$, which implies that
 \[
l_+ = \sup \mathcal{F}_1 \geq \sup \mathcal{F}_2,
 \]
 and hence $l_+ = \max( \sup \mathcal{F}_1, \sup \mathcal{F}_2) = l_{\max}$.
	\end{inparaenum} 
\end{proof}

To finish the proof of Theorem \ref{theo:H0finitesector}, we must verify the three hypotheses of Lemma \ref{lemm:circle}: 
\begin{inparaenum} \item Clearly $B \neq 0$, and in fact, $(A,B)$ is controllable.
	 \item According to Lemma \ref{lemm:tau=0linearconstraint}, $A+rI + \delta BC$ is Hurwitz for arbitrarily small $\delta > 0$. \item Since we are assuming $l \leq l_{\max}$ and $l_{\max}$ is finite, the comments following Lemma \ref{lemm:popov} show that $\Phi(\eta,\zeta)$ is not identically zero. Furthermore, since $(A,B)$ is controllable, there exists a quadratic Lyapunov function establishing $r$-exponential stability in the sector $[m, m+l_{\sup}]$. \qed
\end{inparaenum}

\subsection{Proof of Theorem \ref{theo:H0finitesectortimeinvariant}} \label{subsect:H0finitesectortimeinvariantproof}
 Recall here the parametric assumptions $r < \sigma$ and $m \geq 2r\sigma -r^2$. The only assumption on $f$ is that $\nabla f$ belongs to a finite sector $[m,L]$. As will be apparent from the proof, there is no loss in assuming that
\[
m > 2r\sigma -r^2,
\]
since if equality holds, then one can do no better than Theorem \ref{theo:H0finitesector}. The proof of Theorem \ref{theo:H0finitesectortimeinvariant} is an application of Lemma \ref{lemm:popov} with $\nu = 0$. If $l = L-m$, then the frequency inequality \eqref{eq:popovFDI} is
\begin{equation} \label{eq:H0finitesectortimeinvariantFDI}
\Re \left[ (\lambda + \mu(i\omega -r))H(i\omega -r)\right] + \mu r l  |H(i\omega -r)|^2 \leq \lambda/l
\end{equation}
For a given $\mu \geq 0$, let $\mathcal{F}(\mu)$ denote the set of $l > 0$ such that \eqref{eq:H0finitesectortimeinvariantFDI} holds whenever $\omega \in \RR$ and $i\omega -r$ is not an eigenvalue of $A$. Equivalently, $l \in \mathcal{F}(\mu)$ if and only if
\begin{equation} \label{eq:H0finitesectortimeinvariantquadratic}
\lambda \omega^4 + \beta \omega^2 + \gamma \geq 0 \text{ for all } \omega \in \RR,
\end{equation}
where the coefficients are given by
\begin{align*}
\gamma &= -(l + m  - 2r\sigma + r^2)(l r \mu - \lambda (m- 2r\sigma + r^2)),   \\
\beta &=  2\lambda (\sigma^2-m) + 2 \lambda (\sigma -r)^2 -l(\lambda - \mu(2\sigma-r) ). 
\end{align*} 
If $\lambda = 0$, then \eqref{eq:H0finitesectortimeinvariantquadratic} fails to hold near $\omega = 0$ unless $\mu = 0$ as well. Since $\lambda = \mu = 0$ is not a relevant case, we therefore rescale $\lambda = 1$. Henceforth, consider $\gamma, \, \beta$ as functions of both $l$ and $\mu$.

For a fixed $\mu \geq 0$, define $\mathcal{F}_1(\mu)$ and $\mathcal{F}_2(\mu)$ as in \eqref{eq:Q1} and \eqref{eq:Q2}, respectively. Also define the relaxed feasible set 
\[
\mathcal{F}'_1(\mu) = \{l > 0: \gamma \geq 0 \text{ and } 4\gamma -\beta^2 \geq 0 \}.
\]
Finally, let $\mathcal{F}, \, \mathcal{F}_1,\, \mathcal{F}'_1,\, \mathcal{F}_2$ denote the unions of $\mathcal{F}(\mu), \, \mathcal{F}_1(\mu), \,\mathcal{F}'_1(\mu),\, \mathcal{F}_2(\mu)$ over $\mu \geq 0$, respectively. The goal is to compute 
\[
l_{\sup} = \sup \mathcal{F}.
\]
 As in the proof of Lemma \ref{lemm:H0finitesectoraux}, define $h = 4\gamma - \beta^2$.
 
 \begin{lemm} \label{lemm:relaxed1}
$\sup \mathcal{F}'_1 \geq \sup \mathcal{F}_2$.
\end{lemm}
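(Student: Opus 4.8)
The plan is to identify the point of $\mathcal{F}_2$ witnessing $\sup \mathcal{F}_2$ and to check, by direct substitution, that it also lies in $\mathcal{F}'_1$. I keep the standing hypotheses $r < \sigma$ and $m > 2r\sigma - r^2$ from the surrounding discussion, and abbreviate $D = m - 2r\sigma + r^2 > 0$, so that $l + D > 0$ for every $l > 0$.

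First I would record how $\gamma$ and $\beta$ (with $\lambda = 1$) depend on $\mu$ for a fixed $l > 0$. From the displayed formulas, $\gamma(l,\cdot)$ is affine and strictly decreasing, with $\gamma(l,\mu) \geq 0$ exactly when $\mu \leq \mu^*(l) := D/(lr)$; and $\beta(l,\cdot)$ is affine and strictly increasing, its slope $l(2\sigma - r)$ being positive because $r < \sigma$. Hence, if $l \in \mathcal{F}_2(\mu)$ for some $\mu \geq 0$, then $\mu \leq \mu^*(l)$ is forced by $\gamma \geq 0$, and monotonicity of $\beta$ gives $0 \leq \beta(l,\mu) \leq \beta(l,\mu^*(l))$. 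A one-line computation shows that $l\mu^*(l) = D/r$ is independent of $l$, so $\beta(l,\mu^*(l)) = 2(\sigma^2 - m) + 2(\sigma - r)^2 + (2\sigma - r)D/r - l$ is affine and strictly decreasing in $l$; write $\ell$ for its unique zero. The previous inequality then yields $l \leq \ell$ for every $l \in \mathcal{F}_2$, i.e. $\mathcal{F}_2 \subseteq (0,\ell]$ and so $\sup \mathcal{F}_2 \leq \ell$. (If $\ell \leq 0$ then $\mathcal{F}_2 = \emptyset$ and the lemma is vacuous, so assume $\ell > 0$.)

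It remains to place $\ell$ in $\mathcal{F}'_1$. The key observation is that choosing $\mu = \mu^*(l)$ makes $\gamma(l,\mu^*(l)) = 0$, whence $h(l,\mu^*(l)) = 4\gamma(l,\mu^*(l)) - \beta(l,\mu^*(l))^2 = -\beta(l,\mu^*(l))^2$. Evaluating at $l = \ell$, the defining property $\beta(\ell,\mu^*(\ell)) = 0$ gives $h(\ell,\mu^*(\ell)) = 0 \geq 0$ together with $\gamma(\ell,\mu^*(\ell)) = 0 \geq 0$, so $\ell \in \mathcal{F}'_1(\mu^*(\ell))$, and therefore $\sup \mathcal{F}'_1 \geq \ell \geq \sup \mathcal{F}_2$.

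The argument is really just bookkeeping; the only points that need care are the sign discussion — which is exactly where $r < \sigma$ and $m > 2r\sigma - r^2$ are used — and the observation that substituting the threshold multiplier $\mu^*(l)$ both annihilates $\gamma$ and strips all $\mu$-dependence from $\beta$, leaving an affine function of $l$. That observation is what makes the common extreme point of $\mathcal{F}_2$ and $\mathcal{F}'_1$ visible, and hence makes the proof short.
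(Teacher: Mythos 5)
Your proof is correct. For every $l>0$ the constraint $\gamma\geq 0$ is indeed equivalent to $\mu\leq \mu^*(l)=D/(lr)$ with $D=m-2r\sigma+r^2>0$, $\beta$ is increasing in $\mu$ with slope $l(2\sigma-r)>0$, and along the curve $\gamma=0$ the function $l\mapsto\beta(l,\mu^*(l))=2(\sigma^2-m)+2(\sigma-r)^2+(2\sigma-r)D/r-l$ is affine decreasing; its zero $\ell$ bounds $\mathcal{F}_2$ from above, and since $\gamma(\ell,\mu^*(\ell))=\beta(\ell,\mu^*(\ell))=0$ forces $h(\ell,\mu^*(\ell))=0$, the point $\ell$ lies in $\mathcal{F}_1'$ (with the admissible multiplier $\mu^*(\ell)>0$), which gives $\sup\mathcal{F}_1'\geq\ell\geq\sup\mathcal{F}_2$. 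This is the same underlying mechanism as in the paper --- the extremal point of $\mathcal{F}_2$ sits on the common zero set of $\gamma$ and $\beta$, where $4\gamma-\beta^2$ vanishes, so it automatically belongs to the relaxed set --- but your execution is genuinely more streamlined: by sliding $\mu$ up to the threshold $\mu^*(l)$ and exploiting that $l\mu^*(l)=D/r$ is constant, you collapse the two-parameter feasibility question to a single affine function of $l$, which handles $\mu=0$ on the same footing and yields an explicit formula for the bound $\ell$. The paper instead fixes the form $\beta=\beta_0-l(1-\beta_1\mu)$, treats $\mu=0$ separately by appeal to \S\ref{subsect:H0finitesectorproof}, and runs a three-way case analysis on the sign of $\beta_0$ by comparing the zero sets of $\gamma$ and $\beta$ in the quadrant $\{l>0,\mu>0\}$; that geometric picture is more laborious here, but it is the template the paper then adapts (with the same zero-set reasoning) in the two-multiplier situation of Lemma \ref{lemm:relaxed2}, where your one-line substitution would need to be redone in the extra variable. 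Your parenthetical disposal of the case $\ell\leq 0$ (then $\mathcal{F}_2=\emptyset$ and there is nothing to prove) matches the paper's treatment of the no-intersection case.
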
 
\begin{proof} 
Since $m > 2r\sigma -r^2$, we already have $\sup \mathcal{F}_2(0) \leq \sup \mathcal{F}_1(0) = \sup \mathcal{F}'_1(0)$ by the results of \S \ref{subsect:H0finitesectorproof}. Assume now that $\mu > 0$, in which case the inequality
 $\gamma(l,\mu) \geq 0$ is equivalent to
\begin{equation} \label{eq:cconstraint} 
l \leq \gamma_0/\mu, \quad \gamma_0 = \frac{m-2r\sigma + r^2}{r} > 0.
\end{equation}
Also write $\beta(l,\mu)$ in the form
\[
\beta(l,\mu) = \beta_0 - l( 1- \beta_1\mu),
\]
where $\beta_1 > 0$ but the sign of $\beta_0$ depends on the underlying parameters. Consider the zero sets of $\gamma$ and $\beta$ in the quadrant $\{ l >0,\, \mu>0\}$.

\begin{inparaenum}
	\item If $\beta_0 <0$, then the zero sets intersect at most once, and any possible intersection $(l_0, \mu_0)$ must satisfy $\mu_0 > \beta_1^{-1}$. If there is an intersection at $(l_0,\mu_0)$, then 
	\[
	\gamma(l_0,\mu_0) = \beta(l_0, \mu_0) = 0 \Longrightarrow h(l_0,\mu_0) = 0,
	\] 
	and hence $l_0 \in \mathcal{F}_1'$. Furthermore, $\mathcal{F}_2(\mu) = \emptyset$ for $\mu < \mu_0$. Since $\gamma_0/\mu$ is a decreasing function of $\mu$, any feasible $l \in \mathcal{F}_2$ must satisfy $l \leq l_0$, which proves the result. If there is no intersection, then $\mathcal{F}_2 = \emptyset$.
	
	\item If $\beta_0 > 0$, then the zero sets always intersect at some $(l_0, \mu_0)$ with $\mu_0 < \beta_1^{-1}$. This is true since $\beta_0/(1-\beta_1\mu)$ is increasing for  $\mu \in [0,\beta_1^{-1})$ and grows unboundedly as $\mu \rightarrow \beta_1^{-1}$, while $\gamma_0/\mu$ is decreasing. Thus $l_0 \in \mathcal{F}'_1$. Since $\beta_0/(1-\beta_1\mu)$ is increasing and $\gamma_0/\mu$ is decreasing, any $l \in \mathcal{F}_2$ must satisfy $l \leq l_0$.
	
	\item If $\beta_0 = 0$, then the zero set of $\beta$ is a vertical line at $\mu_0 = \beta_1^{-1}$, which always intersects the graph of $\gamma_0/\mu$ at some $(l_0, \mu_0)$. This time $\mathcal{F}_2(\mu) = \emptyset$ for $\mu < \mu_0$. Finally, argue as in the first part.
\end{inparaenum}
\end{proof}

Next, we compute $\sup \mathcal{F}'_1$. An explicit expression for $h$ is
\begin{equation}
\begin{aligned} \label{eq:hexplicit}
h(l,\mu)  =  &-(1 + (6r-4\sigma)\mu + (2\sigma -r)^2 \mu^2)l^2 \\
& + 8 (\sigma-r)(\sigma - r + (m-2\sigma^2 + r\sigma)\mu) l  \\
& +16(\sigma -r)^2 (m-\sigma^2).
\end{aligned} 
\end{equation}
This is a concave quadratic function of $l$. To see this, note that the coefficient of $l^2$ is a concave quadratic function of $\mu$ whose discriminant with respect to $\mu$ is $32 r(r-\sigma) < 0$. Since the coefficient of $l^2$ is $-1$ when $\mu =0$, it is negative for all $\mu$. The discriminant of $h$ with respect to $l$ is also quadratic in $\mu$, and
\[
\partial_\mu^2 \disc(h;l) = 128m(\sigma-r)^2(m-2r\sigma + r^2) > 0,
\]
so this discriminant is convex. Furthermore,
\[
\disc(\disc(h; l); \mu) = 16384(m-r^2)^2(\sigma -r )^2(\sigma^2- m).
\]
When $m \leq \sigma^2$, let $\mu_- \leq \mu_+$ denote the roots of $\mu \mapsto \disc(h; l)$, and define
\begin{equation} \label{eq:I}
I_\mu = \begin{cases}
(-\infty,\mu_-] \cup [\mu_+, \infty) & \text{if } m \leq \sigma^2, \\
\RR &\text{if }  m > \sigma^2 . \\
\end{cases}
\end{equation}
Thus $h(l,\mu)$ has real roots $l_\pm(\mu)$ for each $\mu \in I_\mu$. Moreover, $l_\pm(\mu)$ is a smooth function of $\mu$ for $\mu \in J_\mu$, where
\begin{equation} \label{eq:J}
J_\mu = \begin{cases} (-\infty,\mu_-) \cup (\mu_+,\infty) &\text{if } m \leq \sigma^2 \\
\RR & \text{if } m > \sigma^2. \end{cases}
\end{equation}

Since $l_+(\mu)$ is the larger of the two roots and $l \mapsto h(l,\mu)$ is a concave quadratic, it follows that $\partial_ h(l_+(\mu), \mu) < 0$. The sign of $l_+'(\mu)$ on $J_\mu$ is the same as that of $\partial_\mu h(l_+(\mu),\mu)$, because
\begin{equation} \label{eq:hderivative}
\partial_l h(l_+(\mu), \mu) \cdot l'_+(\mu) = - \partial_\mu h(l_+(\mu),\mu).
\end{equation}
A direct calculation gives
\begin{equation} \label{eq:hmuderivative}
\partial_\mu h(l,\mu) = l\cdot  (8 (\sigma -r) (m- 2r\sigma  + r^2)-2 l (4 \mu  \sigma ^2-2 \sigma  (1+2 \mu  r)+r (3+\mu  r))).
\end{equation}
 If there exists $\mu_0 \in J_\mu$ for which $l_+(\mu_0) = 0$, then $l_+ = 0$ identically on the connected component containing $\mu_0$. In particular, the sign of $l_+$ is constant on each connected component of $J_\mu$. It is also clear from \eqref{eq:hexplicit} that
 \begin{equation} \label{eq:lasymptotics}
 l_\pm(\mu) \rightarrow 0 \text{ as } \mu \rightarrow \pm\infty.
 \end{equation}
 Consider the following cases:
 
 \begin{inparaenum} 
 \item If $m > \sigma^2$, then $l_+$ clearly attains its maximum over $I_\mu = \RR$ at some point $\mu_{\max}$. Since $l_+(0) > 0$ by the results of \S \ref{subsect:H0finitesectorproof}, we must also have $l_+(\mu_{\max}) > 0$. Furthermore, $\mu_{\max}$ is a critical point of $l_+$.
 
 \item If $m \leq \sigma^2$, then $l_+$ is nonpositive on $(\mu_+, \infty)$. In fact, if $m = \sigma^2$, then $l_+ =0$ identically on $(\mu_+, \infty)$. Thus $0 \in (-\infty,\mu_-)$, so $l_+ >0$ on $(-\infty,\mu_-)$ and $l_+$ attains its maximum at some $\mu_{\max} \in (-\infty, \mu_-]$. Actually, since $l_+'(\mu) < 0$ near $\mu_-$ (since $l_+$ is the larger of the two roots $l_\pm$), it follows that $\mu_{\max} \in (-\infty,\mu_-)$, and $\mu_{\max}$ is a critical point of $l_+$.
\end{inparaenum}

 According to \eqref{eq:hmuderivative}, there is exactly one critical point $\mu_\star$ for $l_+$, obtained by solving the simultaneous equations $\partial_l h(l,\mu) = h(l,\mu) = 0$ and using that $l_\star = l_+(\mu_\star)$ is necessarily positive:
\[
\mu_\star = \frac{m - 2 r \sigma}{2m \sigma - mr}, \quad l_\star = l_+(\mu_\star) = \frac{2m(\sigma -r)}{r} > 0.
\]
Thus we must have $\mu_{\max} = \mu_\star$, so $l_+$ achieves its global maximum over $I_\mu$ at $\mu_\star$. If $\mu_\star \leq 0$, then $l_+(\mu)$ attains its maximum on $I_\mu \cap \RR_+$ at $\mu = 0$. Note that $\mu_\star > 0$ if and only if $m > 2r \sigma$. We have thus shown that
 \[
 \sup \mathcal{F}_1' = \begin{cases} l_\star = l_+(\mu_\star) &\text{if } m > 2r\sigma, \\
 l_+(0) &\text{if } m \leq 2r\sigma.
\end{cases}
\]
Finally, $\gamma(l_\star, \mu_\star) = (m-r^2)^2 > 0$ and $\gamma(l_+(0), 0) > 0$ by the results of \S \ref{subsect:H0finitesectorproof}, so in fact $\sup \mathcal{F} = \sup \mathcal{F}'_1$.

Since this is again a finite-sector problem, the same argument as at the end of \S \ref{subsect:H0finitesectorproof} shows that Theorem \ref{theo:H0finitesectortimeinvariant} follows from Lemma \ref{lemm:popov}. \qed

\subsection{Proof of Theorem \ref{theo:H0quasi}}
Here we study exponential stability in the infinite sector $[m,\infty]$ when $f$ is $m$-quasi-strongly convex. In order to apply Lemma \ref{lemm:popov}, we must take $\mu = 0$. In that case, the frequency inequality is
\begin{equation} \label{eq:H0quasifinitesectorFDI}
\Re \left[(\lambda + \nu(i\omega + r))H(i\omega -r) \right] \leq 0.
\end{equation}
This condition is equivalent to
\[
\beta \omega^2 + \gamma  \geq 0 \text{ for all }\omega \in \RR,
\]
where the coefficients are given by
\begin{align*}
\gamma &= (\lambda + \nu r)(m - 2r\sigma + r^2), \\
\beta &= -\lambda + \nu(2\sigma - 3r).
\end{align*}
The frequency criterion reduces to $\gamma \geq 0$ and $\beta \geq 0$. We always have $\gamma \geq 0$. To ensure $\beta \geq 0$, we must have $\nu > 0$ (otherwise $\lambda = \nu = 0$, which is not a relevant case). Furthermore, the largest value of $r > 0$ for which $\beta \geq 0$ occurs when $\lambda = 0$, namely
\[
r \leq 2\sigma/3.
\]
To finish the proof, we must verify the hypotheses of Lemma \ref{lemm:popov}. It is easy to see that $\Phi(\eta,\zeta)$ is not identically when at least one of the inequalities $r \leq 2\sigma/3$ and $m \geq 2r\sigma-r^2$ is strict, since then
\[
s \mapsto \Re \left[(s + 2r)H(s) \right] 
\]
is not identically zero. In that case, Proposition \ref{prop:popovhyperstability1} even implies the existence of Lyapunov function in the form $V(z) = \langle Pz,z \rangle + g(q) - g(0)$. In the critical case $r = 2\sigma/3$ and $m = 2r\sigma-r^2$, simply let $\eta \rightarrow \infty$ and observe that
\[
\Phi(\infty,s) = (s + 2r)H(s) 
\]
is not identically zero. \qed

\subsection{Proof of Theorem \ref{theo:H0quasifinitesector}} \label{subsect:H0quasifinitesectorproof}
Theorem \ref{theo:H0finitesectortimeinvariant} leaves open the possibility that a larger sector of stability can be established when $f$ is $m$-quasi-strongly convex. In view of Theorem \ref{theo:H0quasi}, we can assume that
\[
2\sigma/3 < r < \sigma.
\]
We also take $ m> 2r\sigma -r^2$. Setting $l = L-m$ with $L < \infty$, the general frequency inequality from Lemma \ref{prop:popovhyperstability2} is
\[
\Re\left[ (\lambda + \mu(i\omega-r) +\nu(i\omega+r)H(i\omega-r) \right]  + \mu r l |H(i\omega-r)|^2  \leq \lambda/l
\]
First we find the largest allowed sector when $\lambda = 0$, and then address the problem for $\lambda > 0$.

When $\lambda = 0$, we can assume $\nu > 0$ (see \S \ref{subsect:H0finitesectorproof}), and hence rescale so that $\nu = 1$.
In that case, the frequency condition is equivalent to
\[
\beta \omega^2 + \gamma \geq 0 \text{ for all } \omega \in \RR,
\]
where this time
\begin{align*}
\gamma &= r((m - 2r\sigma + r^2) -  \mu(l + m - 2r\sigma +r^2))\\
\beta &= 2\sigma -3r + \mu(2\sigma -r).
\end{align*}
The feasible set $\mathcal{F}$ consists of $l >0$ for which $\gamma \geq 0$ and $\beta \geq 0$.
The largest allowed value of $l$ occurs when $\mu \geq  0$ is chosen to be the smallest number for which $\beta \geq 0$, namely
\begin{equation} \label{eq:mustar}
\mu_\star = \frac{3r -2\sigma}{2\sigma -r} > 0.
\end{equation}
In that case $\beta$ vanishes, and we can solve for
\begin{equation} \label{eq:bestLquasi}
\sup \mathcal{F} = \frac{4(\sigma-r)(m-2 r\sigma + r^2)}{3r - 2\sigma},
\end{equation}
We can again apply Lemma \ref{prop:popovhyperstability2}, since $\Phi(\infty,s) = ((1+\mu_\star)s + 2r)H(s)$ is not identically zero.

Next, suppose that $\lambda >0$, in which case we rescale $\lambda = 1$. The frequency condition takes the form
\begin{equation} \label{eq:H0finitesectorquasiquadratic}
\omega^4 + \beta \omega^2 + \gamma \geq 0 \text{ for all } \omega \in \RR,
\end{equation}
where the coefficients are given by
\begin{align*}
\gamma &= -\mu r l^2 + (1+r(\nu -\mu))(m-2r \sigma + r^2)l + (m-2r \sigma + r^2)^2,  \\
\beta &=  2 (\sigma^2-m) + 2 (\sigma -r)^2 -l(1 + (3r-2\sigma)\nu - (2\sigma-r)\mu). 
\end{align*} 
Let $h = 4 \gamma -\beta^2$, viewed as a function of $l, \mu, \nu$. In complete analogy with \S \ref{subsect:H0finitesectortimeinvariantproof}, define  $\mathcal{F}, \, \mathcal{F}_1,\, \mathcal{F}'_1, \, \mathcal{F}_2$, as well as their pointwise analogues $\mathcal{F}_1(\mu,\nu), \, \mathcal{F}_1'(\mu,\nu), \, \mathcal{F}_2(\mu,\nu)$.
\begin{lemm} \label{lemm:relaxed2}
$\sup \mathcal{F}'_1 \geq \sup \mathcal{F}_2$.
\end{lemm}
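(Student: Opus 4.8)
The plan is to prove the stronger statement that $\mathcal{F}_2 \subseteq \mathcal{F}'_1$, from which $\sup \mathcal{F}'_1 \geq \sup \mathcal{F}_2$ follows immediately (the case $\mathcal{F}_2 = \emptyset$ being vacuous). The mechanism is the one already used for Lemma~\ref{lemm:relaxed1}, but it is cleaner here because the multiplier $\nu$ affects $\gamma$ and $\beta$ monotonically and in opposite senses: given a level $l$ that is feasible for $\gamma \geq 0$ and $\beta \geq 0$ at some admissible multipliers, one increases $\nu$ until $\beta$ vanishes, which leaves the constraint $\gamma \geq 0$ intact (indeed improved) and turns $4\gamma - \beta^2 \geq 0$ into the trivially true $4\gamma \geq 0$.

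In detail, fix $l \in \mathcal{F}_2$ and pick $\mu_0, \nu_0 \geq 0$ with $\gamma(l,\mu_0,\nu_0) \geq 0$ and $\beta(l,\mu_0,\nu_0) \geq 0$. Under the parameter constraints in force throughout this subsection, $2\sigma/3 < r < \sigma$ and $m > 2r\sigma - r^2$, the explicit expressions for $\gamma$ and $\beta$ give, for fixed $l > 0$ and $\mu = \mu_0$,
\[
\frac{\partial \gamma}{\partial \nu} = r\,(m - 2r\sigma + r^2)\,l > 0, \qquad
\frac{\partial \beta}{\partial \nu} = -(3r - 2\sigma)\,l < 0,
\]
since $m - 2r\sigma + r^2 > 0$ and $3r - 2\sigma > 0$. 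Thus $\nu \mapsto \beta(l,\mu_0,\nu)$ is affine with negative slope and nonnegative at $\nu_0$, so there is $\nu_1 \in [\nu_0,\infty)$ with $\beta(l,\mu_0,\nu_1) = 0$; and $\nu \mapsto \gamma(l,\mu_0,\nu)$ is strictly increasing, so $\gamma(l,\mu_0,\nu_1) \geq \gamma(l,\mu_0,\nu_0) \geq 0$. Consequently $h(l,\mu_0,\nu_1) = 4\gamma(l,\mu_0,\nu_1) - \beta(l,\mu_0,\nu_1)^2 = 4\gamma(l,\mu_0,\nu_1) \geq 0$, hence $l \in \mathcal{F}'_1(\mu_0,\nu_1) \subseteq \mathcal{F}'_1$. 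Taking the supremum over $l \in \mathcal{F}_2$ finishes the proof.

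I do not anticipate any genuine obstacle; the points that require attention are merely that the two sign computations above are consequences of the standing hypotheses of the subsection (the argument fails, for instance, outside the range $r > 2\sigma/3$), and that $\mathcal{F}'_1$ is defined with the \emph{non-strict} inequality $\gamma \geq 0$, which is precisely what allows $\gamma$ to touch zero along the segment from $\nu_0$ to $\nu_1$ without spoiling the conclusion. Since the inclusion $\mathcal{F}_1 \subseteq \mathcal{F}'_1$ is trivial, the lemma also yields $\sup \mathcal{F} = \sup \mathcal{F}'_1$, reducing the computation of the optimal sector to the single discriminant problem $4\gamma - \beta^2 \geq 0$ analyzed in the remainder of the subsection.
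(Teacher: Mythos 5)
Your proof is correct, and it takes a genuinely different (and more complete) route than the paper. The paper proves this lemma by sketching a case-by-case analysis of how the zero sets of $\beta$ and $\gamma$ intersect in multiplier space, "similar to the proof of Lemma \ref{lemm:relaxed1}", and explicitly omits the details as tedious; it only ever asserts the inequality of suprema. You instead prove the stronger set inclusion $\mathcal{F}_2 \subseteq \mathcal{F}'_1$ by a one-line monotone adjustment: for fixed $l>0$ and $\mu$, the standing hypotheses $m > 2r\sigma - r^2$ and $r > 2\sigma/3$ give $\partial_\nu \gamma = r(m-2r\sigma+r^2)l > 0$ and $\partial_\nu \beta = -(3r-2\sigma)l < 0$, so sliding $\nu$ up to the (unique) zero of the affine function $\nu \mapsto \beta$ preserves $\gamma \geq 0$ and makes $4\gamma - \beta^2 = 4\gamma \geq 0$. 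This exploits exactly the feature that distinguishes the present setting from Lemma \ref{lemm:relaxed1}: there the only multiplier $\mu$ moves $\gamma$ and $\beta$ in the unfavorable directions (increasing $\mu$ decreases $\gamma$ and increases $\beta$), which is why the paper resorts to intersecting zero sets, whereas the extra multiplier $\nu$ here has the favorable monotonicities and is unbounded above. So your argument buys a complete, checkable proof where the paper has a sketch, at the cost of not generalizing back to the $\nu = 0$ situation. One peripheral caveat: your closing remark that the lemma plus $\mathcal{F}_1 \subseteq \mathcal{F}'_1$ "yields $\sup\mathcal{F} = \sup\mathcal{F}'_1$" gives only $\sup\mathcal{F} \leq \sup\mathcal{F}'_1$ as stated; equality needs the extra (easy) observation that $\gamma = 0$ together with $4\gamma - \beta^2 \geq 0$ forces $\beta = 0$, so $\mathcal{F}'_1 \subseteq \mathcal{F}_1 \cup \mathcal{F}_2 = \mathcal{F}$ (the paper instead checks $\gamma > 0$ at the maximizer). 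This does not affect the lemma itself.
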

\begin{proof} 
It suffices to show that $\sup \mathcal{F}_2(\mu,\nu) \leq \sup \mathcal{F}'_1$ for each $\mu, \,\nu \geq 0$. If $\mu =0$, then $\gamma(l,\mu,\nu) > 0$ for all $l,\, \nu > 0$. Otherwise, if $\mu > 0$ is fixed, then $\gamma(l,\mu,\nu) \geq 0$ if and only if
\[
l \leq \gamma_1(\mu,\nu),
\]
where $\nu \mapsto \gamma_1(\mu, \nu)$ is a strictly increasing function $\nu$. Indeed, $\gamma$ is a concave quadratic function of $l$, and
\[
\disc(\gamma; l) = (1 + (\mu - \nu)^2 r^2 + 2r(\mu + \nu))(m-2r\sigma +r^2) ^2> 0. 
\]
Since $\gamma(0,\mu,\nu) = (m-2r\sigma +r^2)^2 > 0$, it follows that $l \mapsto \gamma(l, \mu, \nu)$ has two real roots, one positive and one negative. If we let $\gamma_1(\mu,\nu)$ denote the positive root, then $\nu \mapsto \gamma_1(\mu,\nu)$ is an increasing function since
\[
\partial_\nu \gamma(l,\mu, \nu) = lr(m-2 r \sigma -r^2) > 0.
\]
See the discussion surrounding \eqref{eq:hderivative} for a similar argument. Although the geometry of the zero sets of $\beta$ and $\gamma$ is different here, one can perform a case-by-case analysis similar to the proof of Lemma \ref{lemm:relaxed1}. Because this is straightforward (though more tedious in the present context), the details are omitted.
\end{proof} 

For every $\mu, \,\nu \geq 0$, the function $l \mapsto h(l,\mu,\nu)$ is again a concave quadratic. To see this, compute
\[
\partial^2_l h(l,\mu,\nu) = -2\mu ^2 (r-2 \sigma )^2 + 4 \mu  (3 r-2 \sigma ) ( (2\sigma-r)\nu - 1 )- 2(1 + (3 r -2\sigma)\nu)^2,
\]
which is a concave quadratic function of $\mu$ that is negative when $\mu = 0$ since $r > 2\sigma/3$. Furthermore,
\[
\disc(\partial^2_l h; \mu) = -64 r(2(\sigma-r) + (2\sigma-r)(3r-2\sigma)\nu), 
\]
which is therefore negative for all $\nu \geq 0$. Thus the coefficient of $l^2$ is negative for all $\mu, \, \nu \geq 0$.

By first fixing $\mu \geq 0$ and varying $\nu \geq 0$, we perform an analysis similar to the one in \S \ref{subsect:H0finitesectortimeinvariantproof}.  Since $r > 2\sigma/3$,
\[
\partial_\nu^2 \disc(h; l) = 128(m-2r\sigma + r^2)(m-4r\sigma +4r^2)(\sigma-r)^2 > 0,
\]
so this discriminant is convex with respect to $\nu$. Also,
\begin{multline*}
\disc(\disc(h; l); \nu) = 16384 (\sigma ^2-m) (\sigma -r)^4 (m-2r\sigma+r^2) \\ \times (4r^2(m-2r\sigma+r^2)\mu^2 + 4r^2(3r-2\sigma)\mu + m-2r\sigma+r^2).
\end{multline*}
The last factor is positive, so the sign of the discriminant is the same as the sign of $\sigma^2-m$ (cf. \S \ref{subsect:H0finitesectortimeinvariantproof}).

When $m\leq \sigma^2$, let $\nu_-(\mu) \leq \nu_+(\mu)$ denote the roots of $\nu \mapsto \disc(h;l)$ for a fixed $\mu \geq 0$. For a given $\mu \geq 0$, we can define sets $I_\nu(\mu) \supset J_\nu(\mu)$ in analogy with \eqref{eq:I}, \eqref{eq:J}, satisfying the following properties: 
\begin{enumerate} \itemsep6pt
	\item $l \mapsto h(l,\mu,\nu)$ has real roots $l_\pm(\mu,\nu)$ for $\nu \in I_\nu(\mu)$,
	\item $l_\pm(\mu,\nu)$ are smooth functions of $\nu$ for $\nu \in J_\nu(\mu)$.
\end{enumerate} 
Actually, if $\nu_0 \in J_\nu(\mu_0)$, then $l_\pm$ is a smooth function of $(\mu,\nu)$ in a neighborhood of $(\mu_0,\nu_0)$.
As in \eqref{eq:hderivative}, the sign of $\partial_\nu l_+(\mu,\nu)$ on $J_\nu(\mu)$ agrees with the sign of $\partial_\nu f(l_+(\mu,\nu), \mu, \nu)$. Next, compute
\begin{multline*}
\partial_\nu h(l,\mu,\nu) =
\\l \cdot (8 (\sigma -r) (m-2 r^2+(3 r -2 \sigma)\sigma)-2 l (3 r-2 \sigma ) (1 - (2\sigma-r)\mu  + (3r-2\sigma)\nu).
\end{multline*}
Thus the sign of $l_+(\mu,\nu)$ is constant on each connected component of $J_\nu(\mu)$. Since $l_+(\mu,0) > 0$ by the results of \S \ref{subsect:H0finitesectortimeinvariantproof}, it follows that $l_+(\mu,\nu) >0$ for all $\nu$ in the connected component of $I_\nu(\mu)$ containing $\nu = 0$.

 In fact, arguing exactly as in \S \ref{subsect:H0finitesectortimeinvariantproof}, one can show that $0 \in (\nu_+(\mu), \infty)$, and $\nu \mapsto l_+(\mu,\nu)$ attains its maximum over $I_\nu(\mu)$ at a point $\nu_{\max}(\mu)$ which is a critical point of $\nu \mapsto l_+(\mu,\nu)$. Any critical point $\nu_\star (\mu)$ of  $\nu \mapsto l_+(\mu,\nu)$ must satisfy
\begin{equation} \label{eq:nustar}
\nu_\star(\mu) = \frac{4 (\sigma -r) \left(m-2 r^2 + (3 r -2 \sigma)\sigma\right)}{l_\star(\mu) (3 r-2 \sigma )^2} +  \frac{(2 \sigma -r)\mu-1}{3 r-2 \sigma},
\end{equation}
where $l_\star(\mu) = l_+(\mu,\nu_\star(\mu))$. Plugging this into $h$, it follows that $l_\star(\mu)$ satisfies the equation
\begin{multline*}
4 \mu r(3r-2\sigma)^2 l^2  + 8 (3r-2\sigma) (1-2 \mu  r) (\sigma -r) (m-2r\sigma +r^2)l \\ -16 (\sigma -r)^2 (m - 4r\sigma+4r^2) (m-2r\sigma+r^2 ) = 0.
\end{multline*}
If $\mu = 0$, then there is precisely one solution to this equation, which must therefore be $l_\star(\mu)$. If $\mu > 0$, this quadratic two has real roots, the smaller of which is negative. Thus $l_\star(\mu)$ is given by the larger of the two roots. 

As a corollary, $\nu \mapsto l_+(\mu,\nu)$ has precisely one critical point, which must be $\nu_{\max}(\mu)$. Furthermore, 
\[
\partial_\nu^2 h(l_\star(\mu),\mu,\nu_\star(\mu)) = -2(3r-2\sigma)^2 l_\star^2(\mu) \neq0,
\]
so by the implicit function theorem applied to $(\mu,\nu) \mapsto \partial_\nu h(l_+(\mu,\nu),\mu,\nu)$ and uniqueness of the critical point, $\nu_\star(\mu)$ is a smooth function of $\mu$. Here we use that 
\[
\partial_\nu ( \partial_\nu h(l_+(\mu,\nu),\mu,\nu)) =  \partial_\nu^2 h(l_+(\mu,\nu),\mu,\nu)
\]
when evaluated at $\nu = \nu_\star(\mu)$, since $\partial_\nu l_+(\mu,\nu_\star(\mu))= 0$.
The function $l_\star(\mu) = l_+(\mu, \nu_\star(\mu))$ is therefore also smooth.

Next, observe that $\partial_\mu l_+(\mu,\nu_\star(\mu)) = \partial_\mu l_\star(\mu)$ since $\partial_\nu l_+(\mu, \nu_\star(\mu)) = 0$. Differentiate $(\mu,\nu) \mapsto h(l_+(\mu,\nu), \mu,\nu)$ in $\mu$ and plug \eqref{eq:nustar} into $\partial_\mu h (l_\star(\mu),\nu,\mu_\star(\mu))$ to find
\begin{equation} \label{eq:hode}
\partial_ l h(l_\star(\mu), \mu, \nu_\star(\mu)) \cdot \partial_ \mu l_\star(\mu) = 4 r \cdot l_{\star}(\mu) (l_\star(\mu) - \alpha),
\end{equation}
where we define
\[
\alpha = \frac{4(\sigma-r)(m -2r\sigma +r^2)}{3r - 2\sigma} > 0.
\]
Notice that $\alpha$ is precisely the right-hand side of \eqref{eq:bestLquasi}. Consider the following two cases.

\begin{inparaenum}
	\item First suppose that  $l_\star(0) \leq \alpha$. From \eqref{eq:hode} viewed as a differential equation satisfied by $l_\star(\mu)$, it follows that $l_\star(\mu) \leq \alpha$ for all $\mu \geq 0$. Certainly then,
	\[
	\sup \mathcal{F}_1' \leq \alpha.
	\]
	In that case, we can do no better than by choosing $\lambda, \, \mu,\,\nu \geq 0$ with $\lambda = 0$, as discussed at the beginning of this section. Now compute
	\[
	\nu_\star(0) = \frac{m - 8r^2 + 10r\sigma -4\sigma^2}{(m-4r\sigma +4r^2)(3r-2\sigma)}, \quad l_\star(0) = \frac{2(\sigma-r)(m-4r\sigma +4 r^2)}{3r - 2\sigma}.
	\]
	A direct calculation then shows
	\[
	l_\star(0) \leq \alpha\text{ if and only if } m \geq 2 r^2.
	\]
    Also note that if $l_\star(0) < \alpha$, then $l_\star(\mu) \rightarrow \alpha$ as $\mu \rightarrow \infty$. Plugging this into \eqref{eq:nustar},
	\[
	\nu_\star(\mu) \sim \frac{2\sigma -r}{3r -2\sigma}\mu
	\]
as $\mu\rightarrow \infty$, which is consistent with \eqref{eq:mustar} in the limit.

\item Next, suppose that $l_\star(0) > \alpha$. Thus, $l_\star(\mu)$ is a decreasing function of $\mu$ from the differential equation \eqref{eq:hode}. If 
\[
m \geq 8r^2 - 10r\sigma + 4\sigma^2,
\]
then $\nu_\star(0) \geq 0$, and  $\sup \mathcal{F}'_1 = l_\star(0) > \alpha$. Also,
\[
\gamma(l_\star(0), 0, \mu_\star(0)) = \frac{r^2 (m - 2r\sigma +r^2)^2}{(3 r-2 \sigma )^2} > 0,
\]
so $\sup \mathcal{F} = \sup \mathcal{F}'_1$.
Thus when $m \geq 8r^2 - 10r\sigma + 4\sigma^2$, the largest possible sector for any choice of $\mu, \,\nu, \,\lambda \geq 0$ occurs when $\mu = 0$. In that case, $r$-exponential stability follows from Lemma \ref{lemm:popov}. 
\end{inparaenum} \qed

\subsection{Proof of Theorem \ref{theo:tau>0}} \label{subsect:tau>0proof}
The proof is an application of Lemma \ref{lemm:circle}, where recall $L = \infty$. Thus the frequency inequality is just
\[
\Re H(i\omega - r) \leq 0.
\]
The frequency condition can be rewritten as
\[
\beta \omega^2 + \gamma \geq 0 \text { for all } \omega \in \RR,
\]
where the coefficients are given by
\begin{align*}
\gamma &= (r\tau -1 - 2\sigma \tau)(m - 2r\sigma + r^2 + m\tau(2\sigma  - r)), \\ 
\beta &= 1 + \tau(r-m\tau).
\end{align*} 
If $r = 2\sigma +\tau^{-1} = \sigma + \tau m /2$, then $\beta = \gamma = 0$. Furthermore, $A+\delta BC + r_\star I$ is Hurwitz for all $\delta > 0$ according to Lemma \ref{lemm:tau>0constraint}, and $\Phi(\infty,s) = H(s)$ is not identically zero. Thus Lemma \ref{lemm:circle} establishes $r_\star$-exponential stability. Moreover, although Proposition \ref{prop:popovhyperstability1} does not apply, we can instead use Lemma \ref{lemm:barabanov} to deduce the existence of quadratic Lyapunov function, since $(A,B)$ is controllable and $(A,C)$ is observable. \qed

	\bibliographystyle{alphanum}
	
	\bibliography{frequency_criteria}

\begin{thebibliography}{AABR}

\bibitem[AABR]{alvarez2002second}
Felipe Alvarez, Hedy Attouch, J{\'e}r{\^o}me Bolte, and P~Redont.
\newblock A second-order gradient-like dissipative dynamical system with
  {H}essian-driven damping.: Application to optimization and mechanics.
\newblock {\em Journal de Math{\'e}matiques Pures et Appliqu{\'e}es},
  81(8):747--779, 2002.

\bibitem[ABR]{attouch2002optimizing}
H~Attouch, J~Bolte, and P~Redont.
\newblock Optimizing properties of an inertial dynamical system with geometric
  damping: link with proximal methods.
\newblock {\em Control and Cybernetics}, 31:643--657, 2002.

\bibitem[AC]{accikmecse2008stability}
Beh{\c{c}}et A{\c{c}}{\i}kme{\c{s}}e and Martin Corless.
\newblock Stability analysis with quadratic {L}yapunov functions: Some
  necessary and sufficient multiplier conditions.
\newblock {\em Systems \& control letters}, 57(1):78--94, 2008.

\bibitem[ACFR]{attouch2019first}
Hedy Attouch, Zaki Chbani, Jalal Fadili, and Hassan Riahi.
\newblock First-order optimization algorithms via inertial systems with
  {H}essian driven damping.
\newblock {\em arXiv preprint arXiv:1907.10536}, 2019.

\bibitem[ACPR]{attouch2018fast}
Hedy Attouch, Zaki Chbani, Juan Peypouquet, and Patrick Redont.
\newblock Fast convergence of inertial dynamics and algorithms with asymptotic
  vanishing viscosity.
\newblock {\em Mathematical Programming}, 168(1-2):123--175, 2018.

\bibitem[APR1]{attouch2014dynamical}
H{\'e}dy Attouch, Juan Peypouquet, and Patrick Redont.
\newblock A dynamical approach to an inertial forward-backward algorithm for
  convex minimization.
\newblock {\em SIAM Journal on Optimization}, 24(1):232--256, 2014.

\bibitem[APR2]{attouch2016fast}
Hedy Attouch, Juan Peypouquet, and Patrick Redont.
\newblock Fast convex optimization via inertial dynamics with {H}essian driven
  damping.
\newblock {\em Journal of Differential Equations}, 261(10):5734--5783, 2016.

\bibitem[Bli]{bliman2002absolute}
Pierre-Alexandre Bliman.
\newblock Absolute stability criteria with prescribed decay rate for
  finite-dimensional and delay systems.
\newblock {\em Automatica}, 38(11):2015--2019, 2002.

\bibitem[BLR]{boczar2015exponential}
Ross Boczar, Laurent Lessard, and Benjamin Recht.
\newblock Exponential convergence bounds using integral quadratic constraints.
\newblock In {\em 2015 54th IEEE Conference on Decision and Control (CDC)},
  pages 7516--7521. IEEE, 2015.

\bibitem[CS]{carrasco2018conditions}
Joaquin Carrasco and Peter Seiler.
\newblock Conditions for the equivalence between {IQC} and graph separation
  stability results.
\newblock {\em International Journal of Control}, pages 1--8, 2018.

\bibitem[HM1]{hill1976stability}
David Hill and Peter Moylan.
\newblock The stability of nonlinear dissipative systems.
\newblock {\em IEEE Transactions on Automatic Control}, 21(5):708--711, 1976.

\bibitem[HM2]{hill1980dissipative}
David~J Hill and Peter~J Moylan.
\newblock Dissipative dynamical systems: Basic input-output and state
  properties.
\newblock {\em Journal of the Franklin Institute}, 309(5):327--357, 1980.

\bibitem[HS]{hu2016exponential}
Bin Hu and Peter Seiler.
\newblock Exponential decay rate conditions for uncertain linear systems using
  integral quadratic constraints.
\newblock {\em IEEE Transactions on Automatic Control}, 61(11):3631--3637,
  2016.

\bibitem[Isi]{isidori2013nonlinear}
Alberto Isidori.
\newblock {\em Nonlinear control systems}.
\newblock Springer Science \& Business Media, 2013.

\bibitem[Meg]{megretski2010kyp}
Alexandre Megretski.
\newblock {KYP} lemma for non-strict inequalities and the associated minimax
  theorem.
\newblock {\em arXiv preprint arXiv:1008.2552}, 2010.

\bibitem[Mol]{molinari1975conditions}
B~Molinari.
\newblock Conditions for nonpositive solutions of the linear matrix inequality.
\newblock {\em IEEE Transactions on Automatic Control}, 20(6):804--806, 1975.

\bibitem[Moy]{moylan1975frequency}
P~Moylan.
\newblock On a frequency-domain condition in linear optimal control theory.
\newblock {\em IEEE Transactions on Automatic Control}, 20(6):806--806, 1975.

\bibitem[MR]{megretski1997system}
Alexandre Megretski and Anders Rantzer.
\newblock System analysis via integral quadratic constraints.
\newblock {\em IEEE Transactions on Automatic Control}, 42(6):819--830, 1997.

\bibitem[NNG]{necoara2019linear}
Ion Necoara, Yu~Nesterov, and Francois Glineur.
\newblock Linear convergence of first order methods for non-strongly convex
  optimization.
\newblock {\em Mathematical Programming}, 175(1-2):69--107, 2019.

\bibitem[PG]{popov1973hyperstability}
VM~Popov and Radu Georgescu.
\newblock {\em Hyperstability of control systems}.
\newblock Springer-Verlag, 1973.

\bibitem[RV]{reis2015kalman}
Timo Reis and Matthias Voigt.
\newblock The {K}alman--{Y}akubovich--{P}opov inequality for
  differential-algebraic systems: Existence of nonpositive solutions.
\newblock {\em Systems \& Control Letters}, 86:1--8, 2015.

\bibitem[SA]{safonov1979stability}
Michael~G Safonov and Michael Athans.
\newblock On stability theory.
\newblock In {\em 1978 IEEE Conference on Decision and Control including the
  17th Symposium on Adaptive Processes}, pages 301--314. IEEE, 1979.

\bibitem[SDJS]{shi2018understanding}
Bin Shi, Simon~S Du, Michael~I Jordan, and Weijie~J Su.
\newblock Understanding the acceleration phenomenon via high-resolution
  differential equations.
\newblock {\em arXiv preprint arXiv:1810.08907}, 2018.

\bibitem[SV]{scherer2018stability}
Carsten~W Scherer and Joost Veenman.
\newblock Stability analysis by dynamic dissipation inequalities: On merging
  frequency-domain techniques with time-domain conditions.
\newblock {\em Systems \& Control Letters}, 121:7--15, 2018.

\bibitem[TR]{trentelman2001pick}
Harry~L Trentelman and Paolo Rapisarda.
\newblock Pick matrix conditions for sign-definite solutions of the algebraic
  {R}iccati equation.
\newblock {\em SIAM journal on control and optimization}, 40(3):969--991, 2001.

\bibitem[Wil1]{willems1971least}
Jan Willems.
\newblock Least squares stationary optimal control and the algebraic {R}iccati
  equation.
\newblock {\em IEEE Transactions on Automatic Control}, 16(6):621--634, 1971.

\bibitem[Wil2]{willems1974existence}
Jan Willems.
\newblock On the existence of a nonpositive solution to the {R}iccati equation.
\newblock {\em IEEE Transactions on Automatic Control}, 19(5):592--593, 1974.

\bibitem[Wil3]{willems1972dissipativeI}
Jan~C Willems.
\newblock Dissipative dynamical systems part {I}: General theory.
\newblock {\em Archive for rational mechanics and analysis}, 45(5):321--351,
  1972.

\bibitem[Wil4]{willems1972dissipativeII}
Jan~C Willems.
\newblock Dissipative dynamical systems part {II}: Linear systems with
  quadratic supply rates.
\newblock {\em Archive for rational mechanics and analysis}, 45(5):352--393,
  1972.

\bibitem[WT]{willems1998quadratic}
JC~Willems and HL~Trentelman.
\newblock On quadratic differential forms.
\newblock {\em SIAM Journal on Control and Optimization}, 36(5):1703--1749,
  1998.

\end{thebibliography}

\end{document}